\newtheorem{propo}{Proposition}[section]
\newtheorem{defi}[propo]{Definition}
\newtheorem{lemma}[propo]{Lemma}
\newtheorem{corol}[propo]{Corollary}
\newtheorem{theo}[propo]{Theorem}
\newtheorem{rem}[propo]{Remark}
\newtheorem{prob}[propo]{Problem}
\theoremstyle{definition}
\newtheorem{defn}[propo]{Definition}
\newtheorem{remk}[propo]{Remark}
\newcommand{\ld}{,\ldots ,}
\newcommand{\ra}{ \rightarrow }
\newcommand{\lan}{ \langle }
\newcommand{\ran}{ \rangle }
\newcommand{\diag}{\mathop{\rm diag}\nolimits}
\newcommand{\Id}{\mathop{\rm Id}\nolimits}
\newcommand{\Irr}{\mathop{\rm Irr}\nolimits}
\newcommand{\Om}{\Omega}
\newcommand{\ZZ}{\mathop{\Bbb Z}\nolimits}
\newcommand{\al}{\alpha}
\newcommand{\ep}{\varepsilon}
\newcommand{\lam}{\lambda }
\newcommand{\up}{^{-1}}
\newcommand{\si}{\sigma }
\newcommand{\Lie}{{\rm {Lie}}(G)}
\def\11{$(1)$}
\def\22{$(2)$}
\def\33{$(3)$}
\def\d12{{_{12}}}
\def\ei{{eigenvalue }}
\def\eis{{eigenvalues }}
\def\f{{following }}
\def\ho{{homomorphism }}
\def\ii{{if and only if }}
\def\ir{{irreducible }}
\def\irr{{irreducible representation }}
\def\irrs{{irreducible representations }}
\def\itf{{It follows that }}
\def\mult{{multiplicity }}
\def\rep{{representation }}
\def\reps{{representations }}
\def\St{{Suppose that }}
\def\SL{{\rm SL}}
\def\GL{{\rm GL}}
\def\Sp{{\rm Sp}}
\newcommand{\el}{\end{lemma}}
\newcommand{\om}{\omega }
\newcommand{\bl}{\begin{lemma}\label}
\begin{document}

\title[Spectra of non-regular elements]{Spectra of non-regular elements in irreducible representations of simple algebraic groups}
\date{\today}
\author{Donna M. Testerman}
\address{Institut de Math\'ematiques, Station 8, \'Ecole Polytechnique
  F\'ed\'erale de Lausanne, CH-1015 Lausanne, Switzerland.}
\email{donna.testerman@epfl.ch}
\author{Alexander Zalesski}
\address{Department of Physics,
Mathematics and Informatics,
Academy of Sciences of Belarus,
66 Prospect Nezalejnasti,
Minsk 220000, Belarus}
\email{alexandre.zalesski@gmail.com}

\keywords{semisimple elements, irreducible representations, eigenvalue multiplicities, simple linear algebraic groups}

\thanks{Testerman was supported by the Fonds National Suisse de la Recherche
Scientifique grant number 200021-175571. The second author acknowledges the hospitality of the EPFL Institute of Mathematics during research visits to Lausanne.}

\subjclass[2010]{20G05, 20G07, 20E28}
\dedicatory{Dedicated to the memory of Ernest Vinberg}

\maketitle

{\it Abstract} We study the spectra of non-regular semisimple elements in \ir \reps of simple
algebraic groups. More precisely, we prove that if $G$ is a simply connected simple linear
algebraic group and
$\phi:G\to {\rm GL}(V)$ is a non-trivial \irr for which there exists a non-regular non-central semisimple element
$s\in G$ such that $\phi(s)$ has
almost simple
spectrum, then, with few  exceptions,
$G$ is of classical type and $\dim V$ is minimal possible. Here the spectrum of a
diagonalizable matrix is called \emph{simple} if all \eis are of \mult 1, and \emph{almost simple}
if at most one \ei is
of \mult greater than 1. This yields a kind of characterization of the natural \rep (up to their
Frobenius twists) of classical algebraic groups in terms of the behavior of semisimple elements.

\medskip
Keywords: simple algebraic groups, representations, eigenvalue multiplicities, non-regular elements

\section{Introduction}\label{sec:intro}

A rather general problem which has  received attention in the literature can be stated as that of
classifying \ir group \reps whose image contains a matrix with a certain specified property. 
In this paper we concentrate on a property of the eigenvalue multiplicities of a semisimple
element of
simple linear algebraic groups in their \ir representations. (Henceforth we will use
``algebraic group''
to mean ``linear algebraic group''.) Although problems on eigenvalues in group representations
are important
for many applications, little can be said in full generality. In fact, the behavior of individual
elements in
the image of a representation is quite unpredictable. For a discussion of this and related
questions, we refer
the reader to \cite{Z}.

Here, we consider
matrices with almost simple spectrum, that is, matrices having at most one eigenvalue of
multiplicity
greater
than 1. More precisely, we will address the following:

\begin{prob}\label{p2} Let G be a simple  algebraic group defined over an algebraically closed
  field $F$.
Determine  the \ir representations $\phi$ of G such that $\phi(G)$ contains
a non-scalar diagonalizable matrix with almost simple spectrum. \end{prob}

Note that the notion of matrices with almost simple spectrum is  a natural generalization of
the similar notion of pseudo-reflections, the latter being diagonalizable matrices with two eigenvalues,
one of which has
\mult 1.
The classification of \ir matrix groups generated by  pseudo-reflections was an important project
enjoying numerous applications. (See \cite{Wag81,Wag78,SZ77,SZ80}.) We note as well that the consideration of Problem~\ref{p2} is an
extension of the analogous
question for finite quasi-simple groups of
Lie type and their representations in defining characteristic (see \cite{SZ98,SZ00}),
as well as the classification
(in \cite{GS,SZ1})
 of \ir \reps of simple
 algebraic groups for which a maximal torus acts with 1-dimensional weight spaces \cite{GS,SZ1}.
 A similar problem for irreducible representations of finite simple groups occurring
as subgroups of ${\rm GL}_n(\mathbb C)$ has been studied in \cite{KT}.

While Problem~\ref{p2} is a question about semisimple elements, there is a natural generalization
of the
notions of simple and almost simple
spectra to matrices that are not diagonalizable. Let $V$ be a finite-dimensional vector space
over a field $F$ and
$M\in {\rm GL}(V)$. Then $M$ is called \emph{cyclic} if, for some $v\in V$, the space $V$ is
spanned by
the vectors $v,Mv,M^2v,\ldots$, and \emph{almost cyclic} if, for some $\lam\in F$, $M$ is
conjugate to a
matrix ${\rm diag}(\lambda\cdot {\rm Id},M_1)$, where $M_1$ is a cyclic matrix. Almost cyclic matrices
in the
images of \irrs of finite simple groups
 are studied in \cite{DZ1,DZ3,DZP} (in certain special cases).
 Now let $g\in G$ and let $\phi$ be  an irreducible representation such that $\phi(g)$ is almost cyclic.
 If $g$ is
 not semisimple, then $g=su=us$ with $u\ne 1$ unipotent, and one sees that $\phi(u)$ has a single
 non-trivial
 Jordan block. Such representations have been determined in \cite{S-nonprime} and \cite{TZ16}. On
 the other
hand,  if $g$ is
semisimple, and $\phi(g)$ is almost cyclic, then $\phi(g)$ has almost simple spectrum; indeed
$\phi(g)$
has
at
most two eigenvalues, one of which has multiplicity 1.

Let us now return to our considerations of semisimple elements of $G$ whose spectrum in some
irreducible representation of $G$ is almost simple.
 As every semisimple element $s\in G$ lies in a
maximal
torus,  the condition for $\phi(s)$  to have simple spectrum  implies that all
weight
multiplicities of $\phi$ are equal to 1. The \ir \reps whose set of weights satifies this property
are
determined in 
\cite{GS} for tensor-indecomposable representations and completed in \cite{SZ1}.  By analogy,
one could
expect $\phi$ in Problem 1 to have all but one
weight multiplicity equal to 1. And indeed this is the case, as the following result, which will be
etablished in \S3, shows.

\begin{theo}\label{ag8} Let $G$ be a simple  algebraic group defined over an algebraically closed
  field $F$
  and $\phi$ an \irr of $G$. Then the following statements are equivalent:

$(1)$ The matrix $\phi(s)$ has almost simple spectrum for some non-central semisimple element $s\in G$.

$(2)$ All non-zero weights of $\phi$ are of \mult $1$.\end{theo}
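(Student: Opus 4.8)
The plan is to prove the two implications separately, with $(2)\Rightarrow(1)$ the easier direction. Throughout, fix a maximal torus $T$; every semisimple element is conjugate into $T$, so we may take $s\in T$, and the eigenvalues of $\phi(s)$ are exactly the values $\mu(s)$ over the weights $\mu$ of $\phi$, the eigenvalue $\lambda$ occurring with multiplicity $\sum_{\mu(s)=\lambda}\dim V_\mu$. For $(2)\Rightarrow(1)$ I would, assuming all nonzero weights have multiplicity $1$, choose $s\in T$ \emph{generic} in the sense that $\mu(s)\neq\nu(s)$ for every pair of distinct weights $\mu\neq\nu$ and $\mu(s)\neq 1$ for every nonzero weight $\mu$. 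Each condition excludes a proper closed subset ($\ker(\mu-\nu)$ or $\ker\mu$) of $T$; as there are finitely many and $F$ is infinite, the complement is nonempty and dense, and after deleting the finite set $T\cap Z(G)$ it still contains non-central elements. For such $s$ the nonzero weights give pairwise distinct eigenvalues of multiplicity $1$, all different from $1$, while the zero weight contributes only the eigenvalue $1$, of multiplicity $\dim V_0$. Hence at most one eigenvalue has multiplicity exceeding $1$, so $\phi(s)$ has almost simple spectrum, proving $(1)$.

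For $(1)\Rightarrow(2)$ I would argue by contraposition. Suppose some nonzero weight has multiplicity $\geq 2$, and let $\mathcal{H}$ be the set of all weights of multiplicity $\geq 2$. Since weight multiplicities are constant on Weyl-group orbits, $\mathcal{H}$ is $W$-invariant, and by hypothesis it contains some $\mu_0\neq 0$. I claim $\phi(s)$ then fails to have almost simple spectrum for \emph{every} non-central semisimple $s\in T$, which is exactly the negation of $(1)$. Indeed, each value $\mu(s)$ with $\mu\in\mathcal{H}$ is an eigenvalue of $\phi(s)$ of multiplicity $\geq 2$, so if $\{\mu(s):\mu\in\mathcal{H}\}$ has two or more elements we already have two eigenvalues of multiplicity $\geq 2$ and are finished. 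The crux is therefore the remaining case, in which all of $\mathcal{H}$ is sent by $s$ to a single value $\lambda_0$.

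In that case I would exploit the rank-one subgroup attached to each root $\beta$. For $\mu\in\mathcal{H}$ with $c:=\langle\mu,\beta^\vee\rangle\neq 0$, the reflection $r_\beta$ in $\beta$ gives $r_\beta\mu=\mu-c\beta\in\mathcal{H}$, distinct from $\mu$; as both map to $\lambda_0$ we get $\beta(s)^{c}=1$. Refining this, the multiplicities along the $\beta$-string through $\mu$ are symmetric about its centre (by $r_\beta$-invariance) and unimodal, so any weight strictly between $\mu$ and $r_\beta\mu$ — in particular $\mu-\beta$ when $|c|\geq 2$ — again lies in $\mathcal{H}$, is sent to $\lambda_0$, and yields $\beta(s)=1$. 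Thus $\beta(s)=1$ for every root $\beta$ not orthogonal to all of $\mathcal{H}$. But a root $\gamma$ orthogonal to every weight of $\mathcal{H}$ would, by $W$-invariance, force $\mathcal{H}\subseteq\bigcap_{w\in W}(w\gamma)^{\perp}=0$, since the orbit $W\gamma$ spans the reflection representation (irreducible as $G$ is simple) — contradicting $\mu_0\neq 0$. Hence no such $\gamma$ exists, so $\beta(s)=1$ for \emph{all} roots $\beta$; that is, $s$ is central, contrary to assumption.

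The step I expect to be the main obstacle is the unimodality of weight multiplicities along a root string invoked above. In characteristic $0$ this is classical: the restriction to the rank-one subgroup is completely reducible, each $\mathrm{SL}_2$-constituent contributing a symmetric interval of multiplicities, whose sum is symmetric and unimodal. In characteristic $p$ unimodality can fail — Frobenius twists already produce strings with interior gaps — so the argument cannot be quoted verbatim. To deal with this I would use the Steinberg tensor product theorem to isolate the Frobenius twists, noting that both $(1)$ and $(2)$ are invariant under a twist, which merely rescales every weight by $p$ while preserving multiplicities; the string analysis then has to be carried out in a genuinely characteristic-free manner for the $p$-restricted part, supplemented by a direct treatment of the tensor-decomposable and small-rank configurations, and this is where the real technical work lies.
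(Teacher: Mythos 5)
Your proof of $(2)\Rightarrow(1)$ is correct and coincides with the paper's own argument (Lemma~\ref{nm1}): the elements of $T$ separating the weights form a nonempty open subset, and for such an element the eigenvalue multiplicities are exactly the weight multiplicities. The genuine gap is in $(1)\Rightarrow(2)$, and it is precisely the step you flag at the end: unimodality of weight multiplicities along root strings. Your contrapositive argument needs, for a weight $\mu$ of multiplicity $\geq 2$ and $c=\langle\mu,\beta^\vee\rangle\geq 2$, that the interior weight $\mu-\beta$ again has multiplicity $\geq 2$. In characteristic $p>0$ no such principle is available: the restriction of $V$ to the rank-one subgroup $\langle U_{\pm\beta}\rangle$ is not completely reducible, and its composition factors need not be $p$-restricted even when the highest weight of $V$ is (the value $\langle\lambda,\beta^\vee\rangle$ can reach or exceed $p$), so Frobenius-twisted factors with gapped strings genuinely occur. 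A concrete instance inside a $p$-restricted irreducible: for $G=C_2$, $p=2$, the module $V_{\omega_2}$ is $4$-dimensional, and the string through $\omega_2=\varepsilon_1+\varepsilon_2$ along the short root $\beta=\varepsilon_1+\varepsilon_2$ is $\beta,0,-\beta$ with multiplicities $1,0,1$ --- not unimodal, the interior weight being absent altogether (this is also why Premet's theorem, Theorem~\ref{premet}, carries the hypothesis $p>e(G)$). Your proposed repair --- Steinberg's tensor product theorem, a ``characteristic-free'' string analysis for the restricted part, and ``direct treatment'' of tensor-decomposable and small-rank configurations --- is a plan rather than a proof; it restates exactly the hard part of the theorem.

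The paper closes this gap by a mechanism that avoids root strings entirely. For $p$-restricted highest weights, Theorem~\ref{ac4} exploits the Lie algebra: each root vector $d\rho(X_\alpha)$ maps the unique large eigenspace $E$ of $s$ into a different eigenspace, hence into a space of dimension at most $1$, so the common kernel $\bigcap_{\alpha}\ker\bigl(d\rho(X_\alpha)|_E\bigr)$ is a subspace on which ${\rm Lie}(G)$ acts trivially; by Curtis's theorem \cite{Cu} the module is irreducible over ${\rm Lie}(G)$, forcing this kernel to vanish, whence $\dim E\leq 2\,{\rm rank}(G)$, which is incompatible with $E$ containing the full $W$-orbit of a non-zero weight of multiplicity $\geq 2$ (a separate closed-subsystem argument handles the roots $\alpha$ with $\alpha(s)=1$). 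Highest weights that are not $p$-restricted are then dealt with by Remark~\ref{nonrest} (pure Frobenius twists) and Lemma~\ref{gc3} (the tensor-decomposable case, resting on \cite[Proposition 2]{SZ1} and explicit analysis of the exceptional configurations for $C_n$, $p=2$ and $G_2$, $p=2,3$). So your characteristic-zero argument is correct and attractive, but it does not extend to positive characteristic, which is where the real content of the theorem lies.
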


Theorem~\ref {ag8} will be relevant to our consideration of Problem 1, especially
as the \ir \reps of simple algebraic group
satisfying (2)
have been determined in \cite{TZ2}. The above theorem is  best possible in the sense that in order
to obtain a more precise result one has to specify the nature of the semisimple element $s$
in question. We recall that an element $g\in G$ is said to be \emph{regular} if $\dim(C_G(g))$ is equal to the rank of $G$; for $g$
semisimple this is equivalent to $C_G(g)^\circ$ being abelian \cite[Chapter III, Corollary 1.7]{Spr}. Our investigations show that, with very few exceptions,
a non-central semisimple element $s$ having an almost simple spectrum in an \irr $\phi$ must be regular.

\begin{theo}\label{c99} Let $G$ be a simply connected simple  algebraic group defined over an algebraically
  closed field $F$ of characteristic $p\geq 0$ and let $s\in G$ be a non-regular non-central semisimple
  element.
  Let $V$ be a non-trivial irreducible $G$-module. If the spectrum of $s$ on $V$ is  almost
  simple,
  then 
one of the \f holds:
\begin{enumerate}
\item[\rm {(1)}] $G$ is   of Lie type $A_n,B_n~(p\neq 2)$, $C_n$ or $D_n$ and $\dim V=n+1,2n+1,2n,2n$,
  respectively;

\item[\rm{(2)}] $G=A_3$ and $\dim V=6;$

\item[\rm{(3)}] $G=C_2$, $p\ne 2$ and $\dim V=5;$

\item[\rm{(4)}] $G= D_4$ and $\dim V=8.$
\end{enumerate}
\end{theo}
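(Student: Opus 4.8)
The plan is to start from Theorem~\ref{ag8}, which reduces the problem enormously: since $s$ is non-central semisimple and $\phi(s)$ has almost simple spectrum, all non-zero weights of $\phi$ have multiplicity $1$. The representations $\phi$ with this property have been classified in \cite{TZ2}, so I would first retrieve that list. The listed modules include the natural modules for the classical groups (together with their small-rank exceptional companions such as the $6$-dimensional module for $A_3$, the $5$-dimensional module for $C_2$, and the $8$-dimensional modules for $D_4$ — indeed the conclusions (1)--(4) of the theorem are precisely the classical natural modules and these exceptional cases), as well as a bounded number of other candidates: certain spin modules, the adjoint and small modules for exceptional types, fundamental modules $\omega_i$ for $i$ not extreme, and so on. So the real content of the theorem is to go through this finite list and, for every representation \emph{not} appearing in conclusions (1)--(4), prove that \emph{every} non-central semisimple $s$ with almost simple spectrum on $V$ is forced to be regular, thereby excluding it from our non-regular hypothesis.

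The key computational step is therefore an eigenvalue-multiplicity analysis performed torus-by-torus. Fix a maximal torus $T$ containing $s$; the eigenvalues of $\phi(s)$ are the values $\lambda(s)$ as $\lambda$ ranges over the weights of $V$, and two weights $\lambda,\mu$ contribute the same eigenvalue exactly when $(\lambda-\mu)(s)=1$. For $s$ non-regular, $C_G(s)^\circ$ is non-abelian, which means $s$ lies in the kernel of some positive root $\alpha$, i.e.\ $\alpha(s)=1$; equivalently $s$ lies on a subtorus singled out by a proper Levi subgroup. I would parametrise the non-regular $s$ by which roots vanish on them, and for each candidate module from the \cite{TZ2} list estimate how many coincidences $(\lambda-\mu)(s)=1$ are forced. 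The goal is to show that for the excluded modules, any such $s$ produces at least two distinct eigenvalues of multiplicity $>1$ (destroying almost simplicity) \emph{unless} $s$ is in fact regular. The cleanest way to organise this is: (a) use the explicit weight sets of the candidate modules (Weyl orbits of fundamental weights, dominant weights below them), (b) reduce to the restriction of $\phi$ to the Levi factor $L$ of a parabolic containing $C_G(s)^\circ$, so that the eigenvalue multiplicities of $s$ are governed by the weight multiplicities of composition factors of $\phi|_L$, and (c) count.

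The main obstacle, I expect, will be the borderline modules where a non-regular $s$ genuinely \emph{can} have almost simple spectrum — these are exactly the exceptional cases (2)--(4), and possibly a few more that must be shown to coincide with natural modules after a Frobenius twist or an exceptional isogeny (for instance the $B_n$/$C_n$ duality in characteristic $2$, which is why the hypothesis $p\neq 2$ appears in (1) and (3)). Separating the genuine exceptions from the cases that only \emph{appear} exceptional will require careful low-rank bookkeeping, and here I would argue case-by-case, using exact weight diagrams for the finitely many small groups ($A_3$, $C_2$, $D_4$, $G_2$, $F_4$, $E_6$, $E_7$, $E_8$, $B_3$, etc.) rather than asymptotic counting. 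A secondary difficulty is handling the characteristic-$p$ subtleties: weight multiplicities can drop when a Weyl module is reducible, so a module that would be excluded in characteristic $0$ might, for small $p$, acquire the almost-simple property; I would need to invoke the results on Weyl module structure (or the explicit composition factor data already compiled in \cite{TZ2}) to control this. Once the candidate list is pruned to (1)--(4), the forward direction is immediate: in each of those cases one exhibits a non-regular non-central $s$ realising almost simple spectrum, which is a short direct verification on the natural (or spin) module.
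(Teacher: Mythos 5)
Your strategy is viable, and it is organized around a genuinely different engine than the paper's proof. You make Theorem~\ref{ag8} plus the classification in \cite{TZ2} of irreducible modules with all non-zero weight multiplicities equal to $1$ the main tool, and then prune the resulting finite list by eigenvalue counting. The paper consults \cite{TZ2} only once, for the non-radical weights of $D_n$ (Lemma~\ref{Bn2}); its essential tool is instead Theorem~\ref{sz5} on commuting subsystem subgroups: for a non-regular non-central $s$ one constructs simple subsystem subgroups $K,Y$ with $[K,Y]=1$, $[K,s]=1$, $[s,Y]\neq 1$ (Lemmas~\ref{ab1}, \ref{a12}, \ref{ab2}), and then any $KY$-composition factor non-trivial on both factors forces two distinct eigenvalues of multiplicity at least $2$ (Proposition~\ref{vg1}). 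That disposes of $A_n$ $(n>3)$, $C_n$ $(n>2)$, $F_4$ and $E_6,E_7,E_8$ at once, without any list of modules ever being examined; only $B_n$, $D_n$, the weight $\omega_n$ for $C_n$ with $p=2$, and the small-rank groups $A_2,A_3,C_2,G_2$ remain, and these are handled by the weight-level machinery of Section~\ref{sec:levels} (Lemmas~\ref{abcd}, \ref{mr1}, \ref{no1}, resting on Premet's Theorem~\ref{premet}) together with explicit computations (Lemmas~\ref{bd0}, \ref{Bn2}, \ref{Cnexc} and \ref{sl3}--\ref{gg2}). Your route buys a cleaner global structure---one classification theorem, one finite check---but it pushes all the real mathematical work into the pruning step (each surviving family: exterior squares, spin modules, adjoint modules, and so on, needs an argument of exactly the kind the paper gives in Lemma~\ref{bd0}), and it depends wholesale on the completeness of the tables in \cite{TZ2}, whereas the paper's commuting-subgroup reduction is nearly self-contained.

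Two points in your sketch need repair before the plan is watertight. First, irreducible modules with non-$p$-restricted highest weight that are genuine Steinberg tensor products (for instance $V_{\omega_1}\otimes V_{\omega_1}^{Fr}$ for $A_n$) also have all non-zero weights of multiplicity $1$, so they survive your first reduction yet are not among (1)--(4); you must either verify that the list you import from \cite{TZ2} covers the non-restricted case, or eliminate these modules directly, as the paper does via Lemma~\ref{td2}, which shows that an almost simple spectrum on a tensor product forces $s$ to be regular. (A single Frobenius twist, by contrast, is harmless: Remark~\ref{nonrest}.) Second, your device of restricting to ``the Levi factor of a parabolic containing $C_G(s)^\circ$'' does not cover all non-regular $s$: the connected centralizer of a semisimple element is a maximal-rank subsystem subgroup which need not lie in any proper parabolic---for instance $s=\pm\diag(1,1,-1,-1)$ in $C_2$, and more generally the involutions, which are precisely the delicate elements for $B_n$, $D_n$ and the spin modules. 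The coincidence counting must therefore be organized by the root subsystem $R_0=\{\alpha\in\Phi \mid \alpha(s)=1\}$ itself, as in the paper's Lemma~\ref{bd0}, rather than by Levi subgroups.
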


The \ir \reps of $G$ of the dimensions given in Theorem~\ref{c99} are well known; a description of elements
$s$ which have  almost simple spectrum  on $V$ 
is provided in Section 3.

\medskip
{\bf Notation}  We fix an algebraically closed field $F$ of characteristic $p\geq 0$.  

Throughout the paper  $G$ is a simple simply connected linear algebraic group defined over $F$.
All $G$-modules considered are rational finite-dimensional $FG$-modules. For a $G$-module $V$
(or a \rep $\rho$ of $G$),
 we write $V\in\Irr(G)$ (or $\rho\in\Irr(G)$) to mean that $V$ (or $\rho$)  is rational and irreducible. If
 $H$ is a
 subgroup of $G$ then we write $V|_H$  for the restriction of a $G$-module $V$ to $H$.

We fix a maximal torus $T$ in $G$, which in turn defines the roots of $G$ as well as the weights of
$G$-modules and representations.
The $T$-weights of a $G$-module $V$ are the \ir constituents of the restriction of $V$ to $T$.
As $T$ is fixed, we will omit the reference to $T$ and write
``weights''
in place of ``$T$-weights''. The set of weights of $V$ is denoted by $\Omega(V)$. For $\mu\in\Omega(V)$,
the dimension of the $\mu$-weight space $\{v\in V: tv=\mu(t)v$ for all $t\in T\}$ is called
the
{\it \mult of $\mu$
  in} $V$.  The Weyl group of $G$ is denoted by $W(G)$; as $W(G)=N_G(T)/T$, the
conjugation
action of $N_G(T)$
on $T$ yields an action of $W(G)$ on $T$ and consequently on the set of $T$-weights. The
$W(G)$-orbit of
$\mu\in\Om$ is denoted by $W(G)\mu$. 
The set $\Omega={\rm Hom} (T,F)$ (the rational homomorphisms of $T$ to the multiplicative group of
$F$)
is called the {\it weight lattice}, which is a free $\ZZ$-module of finite rank
called {\it the rank of} $G$.

With an algebraic group $H$ is associated the Lie algebra of $H$ denoted here by ${\rm Lie}(H)$.
For the
simple group $G$, we denote the set of roots (that is, the non-zero weights of the $G$-module ${\rm Lie}(G)$)
by $\Phi$ or $\Phi(G)$. For notions of closed
subsystems of
$\Phi$ and subsystem subgroups see \cite[\S 13.1]{MT}.
The $\ZZ$-span of $\Phi$ is called the {\it root lattice} and is denoted here by $R$ or $R(G)$.
In $\Phi(G)$,
we fix
a base $\Pi=\{\al_1\ld \al_n\}$ and  order the simple roots according to the Dynkin diagrams as
in \cite{Bo}.
The weights in $R$ are called {\it radical}. For each root $\al\in \Phi(G)$, 
 we choose a non-zero element $X_\al$ in the $\al$-weight space of $T$ on
 $\Lie$. Thus, $FX_\alpha$ is the Lie algebra of a $T$-invariant one-dimensional unipotent
 subgroup $U_\al$ of
 $G $; see \cite[Theorem 8.16]{MT} for details.

 One defines a non-degenerate, $W(G)$-invariant, symmetric bilinear form on $\Omega\otimes_{\ZZ}{\mathbb R}$, which we
 express as
 $(\mu,\nu)$.
 The elements $\om_i$  satisfying $2(\om_j,\al_i)=(\al_i,\al_i)\delta_{ij}$ for $1\leq i,j\leq n$
 belong to
 $\Omega$
 and are called {\it fundamental dominant weights} \cite[Ch. VI,\S 1, no.10]{Bo}.  These form
 a $\ZZ$-basis
 of $\Omega$, so every $\nu\in \Omega$ can be expressed in the form $\sum a_i\om_i$, for $a_i\in\mathbb Z$; the set of
 $\nu$ with
 $a_1\ld a_n\geq 0$ is denoted by $\Omega^+$, the set of dominant weights. We set
 $\Omega^+(V)=\Omega^+\cap \Omega(V)$, so $\Omega^+(V)$ is the set of dominant weights of $V$.
 In what
 follows, we will regularly use so-called ``Bourbaki weights'', when $R(G)$ is of type
 $A_{r-1}, B_r, C_r$ or $D_r$, which are elements of a
 $\ZZ$-lattice
 containing $\Omega$ with basis   $\ep_1,\ep_2,\ldots,\ep_r$;  the explicit expressions of the fundamental
 weights and
 the simple roots of $G$ in terms of $\ep_i$'s are given in \cite[Planches I -- IV]{Bo}. 

 There is a standard partial ordering of elements  of $\Omega$:  for
 $\mu,\mu'\in\Omega$ we write
$\mu\prec\mu'$ and $\mu'\succ \mu$ \ii $\mu\ne\mu'$ and $\mu'-\mu\in R^+$. (We write $\mu\preceq \mu'$ and
$\mu'\succeq \mu$ to allow $\mu=\mu'$.) If $\mu$ and $\mu'$ are dominant weights such that $\mu'\preceq\mu$,
we say $\mu$\emph{ is subdominant to} $\mu$.  For the notion of a minuscule weight see
\cite[Ch. VIII, \S 7.3]{Bo8}, where they are tabulated.
Every \ir $G$-module has a unique weight $\om$ such that $\mu\prec\om$ for every $\mu\in\Omega(V)$ with
$\mu\neq \om$.
This is called the {\it highest weight of} $V$. There is a bijection between $\Omega^+$ and
$\Irr (G)$,
so for $\om\in\Omega^+$ we denote by $V_\om$ the \ir $G$-module with highest weight $\om$.
Suppose that $p>0$; a dominant weight $\sum a_i\om_i$ is called $p$-{\it restricted} if
$0\leq a_i<p$ for
all $i=1\ld n$. For uniformity,
we often do not separate the cases with $p=0$ and $p>0$; by convention, when $p=0$, a $p$-restricted weight is
simply
a dominant weight. An \ir $G$-module is called $p$-{\it restricted} if its highest
weight is
$p$-restricted. For classical groups $G$, that is, those with root system one of $A_n$, $B_n$, $C_n$ or $D_n$,
the module with highest weight $\om_1$ is called the
{\it natural module} and the associated representation the \emph{natural representation}. (There is an exceptional case, when $G=B_n$ and $p=2$, where the natural module is the Weyl module of highest weight $\omega_1$.)

The maximal height root of $\Phi(G)$ is denoted by $\om_a$; this is the highest weight of
 $\Lie$ and
 affords a
non-trivial composition factor of 
the adjoint module $\Lie$.
The short root module for $G$ of type $B_n,C_n,F_4,G_2$ is the \ir $G$-module all of whose non-zero
weights are short roots. This is unique, and the highest weight of the short root module is
maximal among
short roots (with respect to $\prec$).
An \ir $G$-module is called \emph{tensor-decomposable} if it is a tensor product of two or more
non-trivial
\ir modules,
similarly for representations.

If $h: G\ra G$ is a surjective algebraic group homomorphism  and $\phi$ is a representation of
$G$ then the
$h$-twist $\phi^h$ of $\phi$ is defined as the mapping $g\ra \phi(h(g))$ for $g\in G$. Of
fundamental importance
is the Frobenius mapping $Fr: G\ra G$ arising from the mapping $x\ra x^p$ $(x\in F)$ when $p>0$.
If $V$
is a  $G$-module and $k$ a nonnegative integer, then the modules $V^{Fr^k}$ are called {\it Frobenius twists of} $V$; if $V$ is
\ir with
 highest weight $\om$ then the highest weight of
 $V^{Fr^k}$ (for $k\geq 0$) is $p^k\om$.

 If $p=2$, then for
every $n$ there is a surjective algebraic group \ho $B_n\ra C_n$ with trivial kernel (so this is
an abstract
group isomorphism); for our purposes, the choice between these two groups is irrelevant, so we
choose to work
with $C_n$ when $p=2$.

For the natural $2n$-dimensional module $M$ of the group $C_n$, $n\geq 2$, 
a basis $\{e_i,f_i\ |\ 1\leq i\leq n\}$ is called \emph{ symplectic} $M$ if $\{e_i,f_i\}$ is a hyperbolic pair for all $i$ and $M$ is the orthogonal direct sum of the spaces $\langle e_i,f_i\rangle$, $1\leq i\leq n$.

Finally, we will assume $n\geq 1$ for $A_n$,  $n>1$ for $C_n$, $n>2$ for $G=B_n$, and $n>3$ for $D_n$.
For brevity
we write
$G=A_n$ to say that $G$ is a simple simply connected algebraic group of type $A_n$, and similarly
for the
other types.


\section{Preliminaries}

\begin{lemma}\label{ma2}  Let $M=M_1\otimes M_2$ be a Kronecker product of diagonal non-scalar
  matrices
  $M_1, M_2$ of sizes $m\leq n$, respectively. Suppose that M has almost simple spectrum.
  Then\begin{enumerate}[]

\item{\rm (1)}  $M_1$ and $M_2$ have simple spectrum, and

\item{\rm (2)} if
  $M_i$ is similar to $M_i\up$ for $i=1,2$, then the \ei multiplicities of $M$ do not exceed  $2$.
\end{enumerate}\el

\begin{proof} 
  (1) Suppose that $M_1$ has an \ei $e$, say, of \mult  $r>1$. Let $b_1,b_2$ be distinct \eis of
  $M_2$. Then
  $eb_1,$ $e b_2$ are distinct \eis of $M$, each of \mult greater than 1. This implies
the claim.

(2)  Suppose the contrary, and let $e$ be an \ei of $M$ of \mult at least 3. By (1), $M_1$ and
$M_2$ have
simple spectra  so $e=a_ib_i$ for $i=1,2,3$ and some (distinct) \eis $a_i$ of $M_1$ and $b_i$ of
$M_2$.
Then $e\up=a_i\up b_i\up$ is an \ei of $M$, of the same \mult as that of $e$. As $M$ has almost
simple
spectrum and is similar to $M^{-1}$ by hypothesis, we have $e=e\up$,
so  $a_1b_2=a_2\up b_1\up$. If $(a_2^{-1},b_2)\ne (a_1,b_1^{-1})$, then $a_1b_2$ is an eigenvalue
of $M$ of multiplicity 2 and so is equal to $e$. But this then implies $a_1b_2 = a_1b_1$,
contradicting that the $b_i$ are distinct. Hence $a_2 = a_1^{-1}$ and $b_2 = b_1^{-1}$.
Similarly, $a_1b_3 = a_3^{-1}b_1^{-1}$ implies that $a_3 = a_1^{-1}$ and $b_3 = b_1^{-1}$.
But now $a_2 = a_3$ contradicting that the $a_i$ are distinct. \end{proof}

\begin{defi}\label{se1} Let $V$ be a
  $G$-module and $\mu,\nu\in\Omega(V)$, $\mu\ne \nu$. 
We say that $s\in T$
\emph{separates the weights $\mu$ and $\nu$} if $\mu(s)\neq\nu(s)$. If this holds for every pair of
distinct weights
  $\mu,\nu$ of $V$, we say that $s$ \emph{separates the weights of} $V$. \end{defi}

If $s$  separates the weights of $V$ then the \ei multiplicities of $s$ acting on $V$ are simply the weight
multiplicities of $V$.

\begin{lemma}\label{nm1} Let $V$ be a non-trivial $G$-module. Let $S\subset T$ be the set of all $t\in T$ that
  separate the weights of $V$. Then \begin{enumerate}[] 
    \item{\rm{(1)}} $S$ is a nonempty Zarisky open subset of $T$.

    \item{\rm{(2)}} Suppose that at most one weight of $V$ has multiplicity greater than $1$. Then, for all
      $s\in S$, the
      spectrum of $s$ is  almost simple.
      \end{enumerate}
  \end{lemma}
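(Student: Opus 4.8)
The plan is to handle the two parts separately: part (1) is a statement about the Zariski topology of the torus $T$, while part (2) then follows formally from the definition of separation together with the semisimplicity of $s$.

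For part (1), I would note that each weight $\mu\in\Omega(V)$ is a character of $T$, hence (composing with the inclusion of the multiplicative group of $F$ into $F$) a regular function on $T$. Thus for any pair of distinct weights $\mu,\nu\in\Omega(V)$ the difference $t\mapsto\mu(t)-\nu(t)$ is a regular function on $T$, and its zero set
$$Z_{\mu,\nu}=\{t\in T:\mu(t)=\nu(t)\}$$
is Zariski closed in $T$. Since $\mu\neq\nu$ as characters, we have $\mu(t)\neq\nu(t)$ for some $t\in T$, so $Z_{\mu,\nu}$ is a \emph{proper} closed subset. By definition $S=T\setminus\bigcup_{\mu\neq\nu}Z_{\mu,\nu}$, where the union runs over the finitely many unordered pairs of distinct weights of $V$; hence $S$ is Zariski open. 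To see that $S$ is nonempty I would invoke the irreducibility of $T$ (the torus being isomorphic to a product of copies of the multiplicative group of $F$): a finite union of proper closed subsets of an irreducible variety is again a proper closed subset, so its complement $S$ is a nonempty open set.

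For part (2), I would use the observation recorded just before the lemma. If $s\in S$ separates the weights of $V$, then on each weight space $V_\mu$ the element $s$ acts by the scalar $\mu(s)$, and separation guarantees that these scalars are pairwise distinct for distinct $\mu\in\Omega(V)$. Since $s\in T$ is semisimple, $V=\bigoplus_{\mu\in\Omega(V)}V_\mu$, so the weight spaces are exactly the eigenspaces of $s$, and the eigenvalue multiplicities of $s$ coincide with the weight multiplicities of $V$. The hypothesis that at most one weight of $V$ has multiplicity greater than $1$ therefore translates directly into the statement that at most one eigenvalue of $s$ has multiplicity greater than $1$; that is, the spectrum of $s$ is almost simple.

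Neither part presents a genuine difficulty; the only point that deserves to be stated carefully is the nonemptiness of $S$ in part (1), where irreducibility of $T$ is what prevents the finitely many coincidence loci $Z_{\mu,\nu}$ from exhausting $T$. This is precisely the step guaranteeing the existence of weight-separating elements, which is the property we shall rely on subsequently.
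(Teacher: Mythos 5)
Your proof is correct and follows essentially the same route as the paper's: express $S$ as the complement of the finite union of the proper closed coincidence loci $\{t\in T:\mu(t)=\nu(t)\}$, and for part (2) identify the eigenspaces of a separating element with the weight spaces so that eigenvalue multiplicities equal weight multiplicities. Your version merely makes explicit two points the paper leaves implicit, namely that distinct characters differ somewhere and that irreducibility of $T$ forces the finite union of proper closed subsets to be proper.
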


  \begin{proof} $(1)$  Let $\mu,\nu$ be weights of $V$, $\mu\ne \nu$. Then $T_{\mu,\nu} :=\{x\in T\ |\ \mu(x)=\nu(x)\}$ is
    a Zarisky
    closed subset $T_{\mu,\nu}$ of $T$. The set of elements of $T$ that do not
    separate some pair of weights of $V$, being the finite union of all $T_{\mu,\nu}$,
    is a proper closed subset of $T$. Moreover, $S=T\setminus (\cup T_{\mu,\nu})$, and so (1) follows.

    $(2)$ Let $s\in S$, so that $\mu(s)\neq \nu(s)$ whenever $\mu\neq \nu$ are weights of $V$.
    Then the \eis of
    $s$ on $V$ are exactly $\mu(s)$, where $\mu$ runs over the weights of $V$,
    and the \mult of $\mu(s)$ equals that of $\mu$, giving (2).  \end{proof}
  
 We will require the following characterization of regular semisimple elements.

 \begin{propo}\label{re3} {\rm \cite[Ch. III, \S 1, Corollary 1.7]{Spr}} Let $G$, $T$ be as usual,
   and let $s\in T$. 
Then the \f conditions are equivalent:

$(1)$ $s$ is regular;

$(2)$ $C_G(s)$ consists of semisimple elements;

$(3)$ for all $\alpha\in\Phi(G)$, $\al(s)\ne1$;

$(4)$ $C_G(x)^\circ$ is a torus.
\end{propo}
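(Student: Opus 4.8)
The plan is to reduce all four conditions to condition $(3)$ by analysing the centralizer of $s$ through the root datum. Write $\Psi=\{\al\in\Phi(G):\al(s)=1\}$; since $\al(s)\be(s)=(\al+\be)(s)$, this is a closed subsystem of $\Phi(G)$. The central structural fact I would establish is that
\[
C_G(s)^\circ=\langle T,\ U_\al:\al\in\Psi\rangle,
\]
a connected subgroup with maximal torus $T$; once this is in hand the equivalences are essentially formal.

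To prove the displayed equality, first observe that $T\subseteq C_G(s)$ because $T$ is abelian, and that $U_\al\subseteq C_G(s)$ exactly when $\al\in\Psi$: conjugation by $s$ sends a root element $u_\al(t)$ to $u_\al(\al(s)t)$, which fixes $u_\al(t)$ for all $t$ precisely when $\al(s)=1$. Hence the connected group $H:=\langle T, U_\al:\al\in\Psi\rangle$ satisfies $H\subseteq C_G(s)^\circ$, and since ${\rm Lie}(U_\al)=FX_\al$ we get $\dim H\geq \dim T+|\Psi|$. For the reverse bound I would pass to Lie algebras. As $s$ is semisimple, ${\rm Ad}(s)$ acts semisimply on ${\rm Lie}(G)={\rm Lie}(T)\oplus\bigoplus_{\al\in\Phi}FX_\al$, trivially on ${\rm Lie}(T)$ and by the scalar $\al(s)$ on $FX_\al$, so its fixed space has dimension $\dim T+|\Psi|$. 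Since the tangent space ${\rm Lie}(C_G(s))$ is contained in ${\rm Lie}(G)^{{\rm Ad}(s)}$ and $\dim C_G(s)\leq\dim{\rm Lie}(C_G(s))$, we obtain $\dim C_G(s)\leq \dim T+|\Psi|$. Combining the two bounds gives $\dim H=\dim C_G(s)$, and as $H$ is a closed connected subgroup of $C_G(s)$ of full dimension, $H=C_G(s)^\circ$.

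Now the equivalences drop out. Condition $(3)$ is exactly the statement $\Psi=\emptyset$, i.e. $C_G(s)^\circ=T$. Then $(3)\Leftrightarrow(1)$ because $\dim C_G(s)=\dim T+|\Psi|$ equals the rank $\dim T$ iff $\Psi=\emptyset$; and $(3)\Leftrightarrow(4)$ because $C_G(s)^\circ$ is a torus iff it contains no root subgroup $U_\al$, each of which supplies nontrivial unipotent elements. For $(2)$, the implication not $(3)\Rightarrow$ not $(2)$ is immediate, since any $U_\al\subseteq C_G(s)$ with $\al\in\Psi$ contributes non-semisimple elements; conversely, since $G$ is simply connected, Steinberg's connectedness theorem gives $C_G(s)=C_G(s)^\circ$, so when $\Psi=\emptyset$ we have $C_G(s)=T$, all of whose elements are semisimple, proving $(3)\Rightarrow(2)$.

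The crux is the structure theorem for $C_G(s)^\circ$, and within it the reverse inclusion via the Lie algebra dimension count; this is clean but genuinely uses that ${\rm Ad}(s)$ is semisimple, so that the fixed space has the predicted dimension, together with the tangent-space bound $\dim C_G(s)\leq\dim{\rm Lie}(C_G(s))$. The only other nontrivial input is Steinberg's connectedness theorem, which I need solely to pass from $C_G(s)^\circ$ to the full group $C_G(s)$ in condition $(2)$---in characteristic $p$ a unipotent element can a priori live in a non-identity component, so connectedness of $C_G(s)$ is exactly what rules this out; the remaining conditions involve only the identity component and require no such input.
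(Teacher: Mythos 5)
The paper does not actually prove this proposition --- it is quoted as a known result, with the proof delegated entirely to the citation of Springer--Steinberg [Ch.~III, \S 1, Corollary 1.7]. So there is no in-paper argument to compare against; what you have done is reconstruct the standard proof from the literature, and your reconstruction is correct. The structure theorem $C_G(s)^\circ=\langle T,\,U_\alpha:\alpha(s)=1\rangle$ that you make the crux is precisely how the cited sources (and, e.g., Chapter 14 of the book of Malle and Testerman, cited as [MT] in the paper) derive these equivalences, and your dimension-count proof of it is sound: the lower bound uses that ${\rm Lie}(T)$ and the root spaces $FX_\alpha$ are independent $T$-weight spaces of ${\rm Lie}(G)$, together with two standard facts you use implicitly (smoothness of algebraic groups in the classical variety setting, so $\dim H=\dim{\rm Lie}(H)$, and the fact that a subgroup generated by closed connected subgroups through the identity is closed and connected); the upper bound uses only the always-valid inclusion ${\rm Lie}(C_G(s))\subseteq{\rm Lie}(G)^{{\rm Ad}(s)}$, so you never need the deeper separability statement that this inclusion is an equality for semisimple $s$. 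Your treatment of condition $(2)$ is also the one place where the paper's standing hypothesis that $G$ is simply connected genuinely enters: Steinberg's connectedness theorem gives $C_G(s)=C_G(s)^\circ$, and you are right that this is exactly what excludes unipotent elements hiding in non-identity components --- for non-simply-connected groups the implication $(3)\Rightarrow(2)$ would instead require the finer fact that the component group of the centralizer of a semisimple element has order prime to $p$. In short: the paper buys brevity by citing; your proof makes the proposition self-contained modulo Steinberg's connectedness theorem, at the cost of invoking (and proving) the centralizer structure theorem.
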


 \begin{lemma}\label{td2} Let
 $V,V_1, V_2$ be non-trivial G-modules.
Let $s\in T\setminus Z(G)$ have almost simple spectrum on V.

 $(1)$ \St $V=V_1\otimes V_2$. Then 
 all weights of
$V_1$ and $V_2$ are of multiplicity $1$, and $s$ is regular.

$(2)$ Suppose that $\Om(V_1)+\Om(V_2)=\Om (V)$. Then $s$ separates the weights of $V_1$ and $V_2$.

\el

\begin{proof} The first claim of (1) follows from Lemma~\ref{ma2}. For the second assertion,
  suppose that $s$ is
  not regular.  Then by Proposition~\ref{re3},
  $C_G(s)$ contains a unipotent element $u\ne 1$. As $u$ stabilizes every eigenspace of $s$ on $V_1$, at
  least one of them is
  of dimension greater than $1$, contradicting Lemma~\ref{ma2}(1).

  (2) Suppose the contrary, that the weights of $V_1$, say, are not separated by $s$, so
  there exist distinct weights
$\mu_1,\mu_2\in\Om(V_1)$ such that $\mu_1(s)=\mu_2(s)$.
Then for every $\lambda,\mu\in\Omega(V_2)$, $\mu_i+\lambda,\mu_i+\mu\in\Omega(V)$ for $i=1,2$ and
$(\mu_1+\lam)(s)=(\mu_2+\lam)(s)$ and $(\mu_1+\mu)(s) = (\mu_2+\mu)(s)$. As $s\notin Z(G)$, the spectrum of
 $s$  on $V$ is not almost simple, a contradiction.\end{proof}

With regards to applying Lemma~\ref{td2}(2), we note that  $\Om(V) = \Om(V_1)+\Om(V_2)$ if $V=V_1\otimes V_2$.
For certain choices of $V,V_1, V_2$, and under certain conditions on $p$, we may deduce that
$\Om(V) = \Om(V_1)+\Om(V_2)$, for $V$ different from $V_1\otimes V_2$.  See Lemma~\ref{wtlattice}(2) below.






We recall here some basic facts about the set of weights of irreducible representations of a
simple algebraic group defined over a field of characteristic $0$ (which are derived from analogous
statements about the weights of irreducible representations of simple Lie algebras defined over $\mathbb C$).
Fixing a maximal torus $T_H$ of a simple algebraic group $H$ defined over ${\mathbb C}$, and
adopting the notation fixed earlier, so in particular, writing $W(H)$ for the Weyl group of $H$
relative to $T_H$, let $\lambda$ be a dominant $T_H$-weight. Then the set of weights of the irreducible
${\mathbb C}H$-module with highest weight $\lambda$ is precisely the set
 $$\{w(\mu)\ |\ \mu\in \Omega^+ , \mu\preceq\lambda,w\in W(H)\},$$
that is, the $W(H)$-conjugates of all weights which are subdominant to the highest weight $\lambda$.
From this one directly deduces the following facts:\begin{enumerate}
\item Let $\lambda,\mu\in \Omega^+ $ and $\mu\prec \lambda$. Let $V_\lambda$, respectively $V_\mu$
  be the associated
irreducible ${\mathbb C}H$-modules;  then $\Om(V_\mu)\subset \Om(V_\lambda)$.
\item \cite[Ch. VIII, \S 7, Proposition 10]{Bo8} Let $\lambda,\mu\in \Omega^+ $, with associated irreducible
  ${\mathbb C}H$-modules $V_\lambda$, $V_\mu$; then $\Om(V_{\lambda+\mu})=\Om(V_\lambda\otimes V_\mu)$.
\item \cite[Ch. VIII, \S 7, Propositions 4 and 6]{Bo8} Let $\lambda\in\Omega^+$, $\lambda\ne 0$.
  If $\lam$ is a radical weight, then some root is a weight of $V_\lam$; otherwise $\Omega(V_\lam)$ contains
  some minuscule weight.
\end{enumerate}

We now return to the situation where the field $F$ is of arbitrary characteristic.
We will use a fundamental result of Premet, which relies on the following definition and notation.

\begin{defn} We set 
$e(G)= 1$ for $G$ of types $A_n,D_n,E_n$, $e(G) = 2$ for $G$ of type $B_n,C_n,F_4$, and $e(G)=3$ for $G$
of type $G_2$. 

\end{defn}

\begin{theo}\label{premet} {\rm \cite[Theorem 1]{Pr}} Assume $p=0$ or $p>e(G)$.
Let $\lambda$ be a $p$-restricted dominant weight. Then
$\Omega(V_\lambda)=\{w(\mu)\ |\ \mu\in \Omega^+, \mu\preceq\lambda,w\in W(G)\}.$\end{theo}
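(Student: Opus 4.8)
The plan is to prove the two inclusions separately, since one of them is formal and the other carries all the content. For the inclusion $\Omega(V_\lambda)\subseteq\{w(\mu)\mid \mu\in\Omega^+,\ \mu\preceq\lambda,\ w\in W(G)\}$ I would compare $V_\lambda$ with the Weyl module $\Delta(\lambda)$, the universal module of highest weight $\lambda$. Its formal character is given by Weyl's formula and is therefore independent of the characteristic, so $\Omega(\Delta(\lambda))$ is exactly the characteristic-$0$ weight set recalled before the statement. As $V_\lambda$ is the irreducible head of $\Delta(\lambda)$, one has $\Omega(V_\lambda)\subseteq\Omega(\Delta(\lambda))$, which is precisely the asserted upper bound; note that this half uses no hypothesis on $p$.

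All the substance lies in the reverse inclusion: each dominant $\mu\preceq\lambda$ must survive in the irreducible quotient. Since $W(G)$ permutes the $T$-weights of any $G$-module, $\Omega(V_\lambda)$ is $W(G)$-stable, so it suffices to produce every dominant weight subdominant to $\lambda$. I would aim to show that $\Omega(V_\lambda)$ is a \emph{saturated} set with highest weight $\lambda$: for every $\nu\in\Omega(V_\lambda)$, every root $\al$ and every integer $i$ with $0\le i\le\langle\nu,\al^\vee\rangle$, the weight $\nu-i\al$ again lies in $\Omega(V_\lambda)$. A saturated set with a prescribed highest weight is unique and coincides with $\{w(\mu)\mid \mu\preceq\lambda \text{ dominant}\}$ (this is the content of the characteristic-$0$ facts recalled above); together with the upper bound this forces equality.

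Saturation reduces to a single-step statement by iterating along root strings and using the reflections $s_\al\in W(G)$: it is enough to show that whenever $\nu\in\Omega(V_\lambda)$ and $\al$ is a simple root with $\langle\nu,\al^\vee\rangle\ge 1$, the weight $\nu-\al$ is again a weight. I would test this inside the rank-one subgroup $\langle U_\al,U_{-\al}\rangle$ of type $A_1$, where the $\al$-string through $\nu$ is governed by the representation theory of $\SL_2$ in characteristic $p$. In characteristic $0$ the lowering operator $X_{-\al}$ moves $\nu$ down the whole string without gaps; in characteristic $p$ this can fail, since $\SL_2$-strings collapse once a pairing reaches $p$ (the Steinberg tensor-product phenomenon). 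The hypothesis $p>e(G)$ is what keeps the rank-one data classical: $e(G)$ equals the largest absolute value $|\langle\al,\be^\vee\rangle|$ of a Cartan integer, and the non-vanishing modulo $p$ of the relevant Chevalley structure constants and string coefficients is controlled by integers bounded in terms of these Cartan integers, hence by $e(G)$.

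The main obstacle, and the real substance of Premet's argument, is to turn these local, rank-one non-vanishing statements into the global assertion that \emph{no} dominant weight subdominant to $\lambda$ is lost in passing to the irreducible quotient. One must arrange the descent through the poset of dominant weights below $\lambda$ so that every step used lies along a root for which the $\SL_2$-string is short enough for the bound $p>e(G)$ to guarantee non-vanishing, and then run a downward induction anchored at $\lambda$, which occurs with multiplicity $1$. I expect the delicate part to be precisely the control of the pairings $\langle\nu,\al^\vee\rangle$ modulo $p$ along this descent, ruling out a genuine characteristic-$p$ collapse; this is exactly where the sharp bound $e(G)$ enters, and it explains why the statement must exclude the small primes $p\le e(G)$.
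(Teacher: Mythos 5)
The paper contains no proof of this statement to compare against: it is Premet's theorem, imported wholesale as \cite[Theorem 1]{Pr}, so your reconstruction has to stand entirely on its own. Its skeleton is sound. The inclusion $\Omega(V_\lambda)\subseteq\{w(\mu)\ |\ \mu\in\Omega^+,\ \mu\preceq\lambda,\ w\in W(G)\}$ does follow, with no hypothesis on $p$, from the fact that $V_\lambda$ is the irreducible quotient of the Weyl module $\Delta(\lambda)$, whose character is given by Weyl's formula independently of the characteristic; and your reduction of the reverse inclusion first to saturation of $\Omega(V_\lambda)$, then (using $W$-stability and reflections) to the single-step claim that $\nu\in\Omega(V_\lambda)$, $\alpha$ simple, $\langle\nu,\alpha^\vee\rangle\geq 1$ force $\nu-\alpha\in\Omega(V_\lambda)$, is a correct and standard manoeuvre.

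The gap is that this single-step claim \emph{is} Premet's theorem, and you never prove it; you announce it (``I would aim to show\dots'') and then concede that the passage from local rank-one statements to the global conclusion is the ``real substance''. Moreover, the mechanism you propose cannot work as described: it is not true that $p>e(G)$ keeps the rank-one data classical. Even for $p$-restricted $\lambda$, the restriction of $V_\lambda$ to the $\alpha$-root subgroup $\SL_2$ involves composition factors with non-restricted highest weights --- for $G=A_2$ (where $e(G)=1$, so every prime is allowed) and $\lambda=(p-1)(\omega_1+\omega_2)$, the pairing of $\lambda$ with the coroot of the highest root equals $2p-2\geq p$, so after conjugating by $W$ the relevant $\SL_2$-strings have length at least $p$. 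For such factors Steinberg's tensor product theorem creates genuine gaps in weight strings ($L(p)\cong L(1)^{[1]}$ has weights $\pm p$ only), so membership of $\nu$ in $\Omega(V_\lambda)$ does not locally force $\nu-\alpha\in\Omega(V_\lambda)$; excluding such a collapse requires a genuinely global argument over the poset of dominant weights, which is what occupies Premet's paper and is absent here. That the failure, for small $p$, occurs exactly at your single-step claim is shown by $G=G_2$, $p=3=e(G)$, $\lambda=\omega_2$: the long root $\nu=3\alpha_1+\alpha_2$ is a weight of $V_{\omega_2}$ (which is $7$-dimensional in characteristic $3$, with weights the long roots and $0$) and $\langle\nu,\alpha_1^\vee\rangle=3\geq 1$, yet $\nu-\alpha_1=2\alpha_1+\alpha_2$ is a short root and is not a weight. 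So what you have is a correct frame around the theorem together with an accurate diagnosis of where the difficulty lies, but not a proof.
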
  An application of
Theorem~\ref{premet} and the preceding remarks now gives:

\begin{lemma}\label{wtlattice}
  Assume $p=0$ or $p>e(G)$. Let $\lambda,\mu\in \Omega^+$, where $\lambda$ is $p$-restricted, and let $V_\lambda$,
  respectively,
$V_\mu$ be the associated irreducible $G$-modules. Then the following hold.\begin{enumerate}[]
\item{\rm{(1)}} If $\mu\prec\lambda$ then
    $\Omega(V_\mu)\subseteq \Omega(V_\lambda)$.
  \item{\rm{(2)}} If $\lambda+\mu$ is $p$-restricted then   $\Omega(V_{\lambda+\mu})=
    \Omega(V_\lambda\otimes V_\mu)=\Omega(V_\lambda)+\Om( V_\mu)$.
  \item{\rm{(3)}} If $\lam$ is a radical weight, then some root is a weight of $V_\lam$; otherwise
    $\Omega(V_\lam)$ contains some minuscule weight.
\end{enumerate}
\end{lemma}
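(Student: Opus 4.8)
The plan is to reduce all three parts to the corresponding facts (1)--(3) recorded above for the characteristic-zero setting, using Theorem~\ref{premet} as the bridge. Write $\Sigma(\nu):=\{w(\zeta)\mid \zeta\in\Omega^+,\ \zeta\preceq\nu,\ w\in W(G)\}$ for the saturated set attached to a dominant weight $\nu$; this set depends only on the root datum, not on $p$, and for $H$ defined over $\mathbb C$ it is exactly the weight set of the irreducible module of highest weight $\nu$. The key point is that, under the standing hypothesis $p=0$ or $p>e(G)$, Theorem~\ref{premet} gives $\Omega(V_\nu)=\Sigma(\nu)$ for every $p$-restricted $\nu$. Thus each assertion will follow once it is rephrased as an identity or inclusion among saturated sets and matched with the characteristic-zero statement.

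For (1) I would first note that $\mu$ need not be $p$-restricted, so Premet's theorem does not apply to $V_\mu$ directly. Instead I use the elementary fact that $\Omega(V_\mu)\subseteq\Sigma(\mu)$: every weight of the highest weight module $V_\mu$ is $\preceq\mu$, and since the weight set is $W(G)$-stable, each weight is $W(G)$-conjugate to a dominant weight which is again $\preceq\mu$. Since $\mu\prec\lambda$ forces $\Sigma(\mu)\subseteq\Sigma(\lambda)$, and $\lambda$ is $p$-restricted so that $\Sigma(\lambda)=\Omega(V_\lambda)$ by Theorem~\ref{premet}, the chain $\Omega(V_\mu)\subseteq\Sigma(\mu)\subseteq\Sigma(\lambda)=\Omega(V_\lambda)$ yields the claim.

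For (2), observe that if $\lambda+\mu$ is $p$-restricted then so are $\lambda$ and $\mu$, as their coordinates in the fundamental weight basis are non-negative and bounded by those of $\lambda+\mu$; hence Premet's theorem applies to all three modules and gives $\Omega(V_\lambda)=\Sigma(\lambda)$, $\Omega(V_\mu)=\Sigma(\mu)$, $\Omega(V_{\lambda+\mu})=\Sigma(\lambda+\mu)$. The equality $\Omega(V_\lambda\otimes V_\mu)=\Omega(V_\lambda)+\Omega(V_\mu)$ is immediate from the description of the weights of a tensor product. The remaining equality $\Omega(V_{\lambda+\mu})=\Omega(V_\lambda)+\Omega(V_\mu)$ becomes, after substituting saturated sets, exactly the characteristic-zero identity $\Sigma(\lambda+\mu)=\Sigma(\lambda)+\Sigma(\mu)$, i.e.\ fact (2). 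For (3), since $\lambda$ is $p$-restricted we again have $\Omega(V_\lambda)=\Sigma(\lambda)$, so $\Omega(V_\lambda)$ contains the same roots and minuscule weights as the characteristic-zero module of highest weight $\lambda$; as ``$\lambda$ is radical'' and ``a given weight is a root'' or ``is minuscule'' are notions independent of $p$, fact (3) transfers verbatim.

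The only genuine subtlety I anticipate is the one flagged in part (1): because $\mu$ is not assumed $p$-restricted one cannot invoke Premet's theorem for $V_\mu$ and must instead bound $\Omega(V_\mu)$ by the saturated (equivalently, Weyl-module) weight set $\Sigma(\mu)$. Once this is in place, the remaining work is purely the bookkeeping of rewriting facts (1)--(3) with each $\Omega(V_\bullet)$ replaced by $\Sigma(\bullet)$.
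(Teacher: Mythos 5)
Your proof is correct and takes essentially the same route as the paper, whose proof of this lemma is precisely an application of Theorem~\ref{premet} together with the three characteristic-zero facts recorded just before the statement. Your explicit handling of the subtlety in part (1) --- that $\mu$ need not be $p$-restricted (e.g.\ $3\omega_2\prec 2\omega_1+2\omega_2$ in type $A_2$ with $p=3$), so one must bound $\Omega(V_\mu)$ by the saturated set $\Sigma(\mu)$ rather than invoke Premet's theorem for $V_\mu$ --- is a detail the paper leaves implicit.
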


For the following result we introduce an additional notation. Let $\Psi\subset\Phi$ be a
closed subsystem. Then we set $G(\Psi)$ to be the subgroup generated by the
$T$-root subgroups corresponding to roots in $\Psi$.

\begin{theo}\label{sz5} {\rm \cite[Theorem 1]{SZ05}}
  Let $ G$ be a simple algebraic group with root system $\Phi$. If $\Phi$ is of type $B_n$, assume
  ${\rm char}(F)\ne 2$. Let $R_1,R_2\subset \Phi$ be closed subsystems
such that the subgroups $G_1:=G(R_1)$ and $G_2:=G(R_2)$ are simple and $[G_1,G_2]=1$.
Let $\phi$ be an \irr of $ G$. Then one of the \f holds:\begin{enumerate}[]

\item{\rm {(1)}} $\phi|_{G_1G_2}$ contains a composition factor which is non-trivial for both $G_1$
 and $G_2;$

\item{\rm{(2)}} $G$ is classical and $\phi$ is
a Frobenius twist of either the natural \rep or the dual of the natural representation of $G;$

\item{\rm {(3)}} $G=C_n$ with $p=2$, $G=B_n$ with $n>2 $, or $G=D_n$ with  $n>4$, and $\phi$ is a Frobenius
  twist of the irreducible representation of highest weight $\omega_n$, or one of $\omega_n $ and $\omega_{n-1}$ if $G=D_n$.\end{enumerate}\end{theo}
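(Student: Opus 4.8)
The plan is to prove the disjunction in contrapositive form: I would assume that alternative (1) fails and deduce that $\phi$ must be as in (2) or (3). Fix $T$ so that $T_i=T\cap G_i$ is a maximal torus of $G_i$ with root system $R_i$. Since $G_1$ and $G_2$ commute and are generated by root subgroups, $G_1G_2$ is a subsystem subgroup and an image of $G_1\times G_2$; hence every composition factor of $\phi|_{G_1G_2}$, viewed as a module for $G_1\times G_2$, is an outer tensor product $M_1\boxtimes M_2$. Such a factor is nontrivial for both $G_1$ and $G_2$ precisely when both $M_i$ are nontrivial, i.e.\ when its $G_1G_2$-highest weight $\mu$ satisfies $\langle\mu,\gamma^\vee\rangle\neq 0$ for some $\gamma\in R_1$ and some $\gamma\in R_2$. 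Thus the failure of (1) is exactly the statement that every composition-factor highest weight restricts trivially to $T_1$ or to $T_2$.

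The first step is a reduction to rank one. If $G_1'\leq G_1$ and $G_2'\leq G_2$ are commuting subsystem subgroups and $\phi|_{G_1'G_2'}$ already has a factor nontrivial for both, then that factor lies inside a composition factor of $\phi|_{G_1G_2}$, which is therefore nontrivial for $G_1\supseteq G_1'$ and for $G_2\supseteq G_2'$; so (1) for the smaller pair forces (1) for $(G_1,G_2)$. Consequently, under the assumption that (1) fails, it fails for every pair $H_\alpha=\langle U_{\pm\alpha}\rangle$, $H_\beta=\langle U_{\pm\beta}\rangle$ with $\alpha\in R_1$, $\beta\in R_2$; these are commuting groups of type $A_1$. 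Restricting to $H_\alpha\times H_\beta$ and decomposing into modules $L(a)\boxtimes L(b)$, the absence of a factor with $a,b\geq 1$ translates, in characteristic $0$ and for $p$-restricted $\phi$ under the hypotheses of Theorem~\ref{premet}, into the \emph{axis condition}: every $\mu\in\Omega(\phi)$ satisfies $\langle\mu,\alpha^\vee\rangle=0$ or $\langle\mu,\beta^\vee\rangle=0$, for all such $\alpha,\beta$.

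The core of the argument is to show that the axis condition, imposed for all $\alpha\in R_1$, $\beta\in R_2$, forces $\lambda$ to be minuscule, and in fact one of the listed weights. I would use Theorem~\ref{premet} and Lemma~\ref{wtlattice} to describe $\Omega(\phi)$ as the set of $W$-conjugates of the dominant weights $\preceq\lambda$, and then hunt for a weight lying off both axes. The highest weight $\lambda$ itself violates the axis condition unless it is orthogonal to $R_1$ or to $R_2$; the real work is to exploit the \emph{connecting roots} $\gamma\in\Phi$ with $\langle\gamma,\alpha^\vee\rangle\neq0\neq\langle\gamma,\beta^\vee\rangle$, together with the saturation of root strings, to produce a subdominant weight or Weyl conjugate with both pairings nonzero whenever $\lambda$ is not minuscule. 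A case analysis over the type of $\Phi$ and the coefficients of $\lambda=\sum a_i\omega_i$ then eliminates every highest weight except those giving the natural module and its dual (matching (2)) and the (half-)spin modules for $B_n$, $C_n$ with $p=2$, and $D_n$ (matching (3)); conversely one checks directly that these do satisfy the axis condition for suitable $R_1,R_2$, the model computation being that, for the pair $\alpha=\epsilon_i-\epsilon_j$, $\beta=\epsilon_i+\epsilon_j$, the spin weights $\tfrac12(\pm\epsilon_i\pm\epsilon_j)$ map to bi-weights $(\pm1,0)$ and $(0,\pm1)$, all on the axes.

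Finally, a general highest weight is reduced to the $p$-restricted case by Steinberg's tensor product theorem: writing $\phi=\bigotimes_j L(\lambda_j)^{Fr^{k_j}}$, a Frobenius twist merely scales all bi-weights and preserves the axis condition, while two or more nontrivial twisted factors manufacture a factor nontrivial for both $G_1$ and $G_2$; this forces a single twist and explains the ``Frobenius twist'' in (2) and (3). I expect the main obstacle to be the core case analysis together with the small-characteristic behaviour: when $p\leq e(G)$ the description of $\Omega(\phi)$ in Theorem~\ref{premet} can fail and the weight strings of $L(\lambda)$ need not be saturated, as already happens for twisted $A_1$-modules, so the weight set must be controlled by hand. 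This is precisely where the exclusion of $p=2$ for $B_n$, the appearance of $C_n$ with $p=2$, and the delicate spin cases of (3) enter, and verifying that exactly the listed weights survive is the technical heart of the proof.
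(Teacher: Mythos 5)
First, a structural point: the paper does not prove this statement at all --- it is imported by citation from \cite[Theorem 1]{SZ05} --- so there is no internal proof to compare yours against, and I can only assess your argument on its own terms. Your scaffolding is sound. The reduction to commuting $A_1$-pairs $(H_\alpha,H_\beta)$ with $\alpha\in R_1$, $\beta\in R_2$ is valid (a composition factor of $\phi|_{H_\alpha H_\beta}$ non-trivial for both factors sits inside a composition factor of $\phi|_{G_1G_2}$ non-trivial for both $G_1$ and $G_2$); the equivalence of the failure of (1) for such a pair with your ``axis condition'' on the bi-weights $(\langle\mu,\alpha^\vee\rangle,\langle\mu,\beta^\vee\rangle)$ is in fact correct in every characteristic, since it is a statement about weights of $A_1\times A_1$-modules and does not need Theorem~\ref{premet} (which you only need later, to describe $\Omega(\phi)$); and the Steinberg tensor-product argument showing that two or more non-trivial twisted factors force conclusion (1), hence reducing to a single twisted $p$-restricted factor, also works as you describe.

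The genuine gap is that the entire content of the theorem --- the classification --- is left undone. Everything after ``the core of the argument is to show'' is a statement of intent: the case analysis over types, coefficients of $\lambda$, and (crucially) configurations $(R_1,R_2)$ is never performed, and you yourself defer it as ``the technical heart.'' Moreover, the one concrete intermediate claim you do make, that the axis condition forces $\lambda$ to be minuscule, is false in both directions: for $G=B_n$ ($p\neq 2$) the natural module $V_{\omega_1}$ appears in conclusion (2) although $\omega_1$ is not minuscule, while for $G=A_n$ ($n\geq 3$) the minuscule weight $\omega_2$ is not an exception at all, since for $R_1=\{\pm(\varepsilon_1-\varepsilon_2)\}$, $R_2=\{\pm(\varepsilon_3-\varepsilon_4)\}$ the weight $\varepsilon_1+\varepsilon_3$ of $V_{\omega_2}$ violates the axis condition. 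Note also that the analysis cannot be run over $\lambda$ alone: the exceptional modules of (3) satisfy the axis condition only for very special pairs --- the half-spin module of $D_n$ already fails it for $R_1=\{\pm(\varepsilon_1-\varepsilon_2)\}$, $R_2=\{\pm(\varepsilon_3-\varepsilon_4)\}$ (take the weight $\frac12(\varepsilon_1-\varepsilon_2+\varepsilon_3-\varepsilon_4+\varepsilon_5+\cdots+\varepsilon_n)$), yet survives for pairs such as $R_1=\{\pm(\varepsilon_i-\varepsilon_j)\}$, $R_2=\{\pm(\varepsilon_i+\varepsilon_j)\}$, exactly your model computation --- so the dichotomy must be established for each admissible configuration $(R_1,R_2)$ simultaneously with $\lambda$, and the small-characteristic cases (notably $C_n$, $p=2$, $\lambda=\omega_n$, where Premet's theorem is unavailable and the weight set genuinely shrinks) must be handled by separate arguments. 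Until that two-parameter case analysis is written out, what you have is a correct reduction plus a plan, not a proof.
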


The \f lemma will allow us in some cases  to reduce our analysis of elements with almost simple spectrum
to \reps all of whose weights
 occur with multiplicity one.

 \begin{lemma}\label{no1} Let G be a simple algebraic group of rank greater than $1$ and
   $s\in T\setminus Z(G)$. 
   Assume that $p=0$ or $p>e(G)$. Let $\mu\neq 0$ be a $p$-restricted  dominant weight.
 \begin{enumerate}[]
 \item{\rm{(1)}} Let $\mu_m$ be the minimal non-zero weight  subdominant to $\mu$. Assume that the spectrum of $s$ on $V_{\mu_m}$ is not almost simple. Then the following hold:
   \begin{enumerate}[]
\item{\rm{(i)}} if $\mu$ is not radical, then  the spectrum of $s$ on $V_\mu$ is not almost simple;

\item{\rm{(ii)}} if $\mu$ is  radical and the \mult of the weight $0$  in
 $V_{\mu_m}$ is at most  $1$, then the spectrum of  $s$ on $V_\mu$  is not almost simple;

\item{\rm{(iii)}} if $\mu$ is  radical and the \mult of the weight $0$  on both
$V_{\mu}$ and  $V_{\mu_m}$
is greater than $1$, then the spectrum of $s$ on $V_\mu$ is not almost simple;

\item{\rm{(iv)}} if $0\prec \mu_m\preceq\mu$ and $s$ is non-regular,
 then the spectrum of  $s$ on $V_{\mu}$ is not almost simple.
\end{enumerate}

\item{\rm{(2)}} Suppose that $\omega_a\prec\mu$,
   the \mult of the weight  $0$ in $V_\mu$ is greater than $1$, 
   and  the spectrum of  $s$ on $V_{\om_a}$  is not almost simple. Then the spectrum of  $s$ on $V_{\mu}$ is not
   almost simple.

 \item{\rm {(3)}} Suppose that $\om_a\prec\mu$, $s$ is non-regular,
   and  the spectrum of  $s$  on $V_{\om_a}$ is not almost simple. Then the spectrum of  $s$ on $V_{\mu}$  is
   not
   almost simple.
\end{enumerate} \el

 \begin{proof} By assumption, Theorem~\ref{premet} holds, and we may apply Lemma~\ref{wtlattice}.
   If $\mu_m$ is non-radical, then all weight multiplicities of $V_{\mu_m}$   are well known to be equal to
   $1$; (i) follows.
   Together with the hypothesis in (ii) about the multiplicity of the zero weight, we observe that if the
   spectrum of
$s$ on $V_{\mu_m}$ is not almost simple then there are $4$ distinct weights $\lam_1,\lam_2,\mu_1,\mu_2$ of
$V_{\mu_m}$ such that $\lam_1(s)=\lam_2(s)\neq \mu_1(s)=\mu_2(s)$. Then Lemma~\ref{wtlattice}(1) implies
that  these weights are weights of $V_{\mu}$, and the result follows.

In case (iii), $\mu_m$ is the maximal height short root and  the multiplicity of any non-zero weight
in $V_{\mu_m}$   is  equal to $1$.  Saying that  the spectrum of  $s$ on $V_{\mu_m}$  is not almost simple means that there
exist weights $\lam_1,\lam_2 $ of $V_{\mu_m}$ such that $\lam_1(s)=\lam_2(s)\neq 1$. As these weights are
weights of $V_{\mu}$ (again by Lemma~\ref{wtlattice}(1)) and, by hypothesis, the weight $0$ occurs in $V_{\mu}$
with
\mult greater than $1$, the result follows.

(iv) As $s$ is non-regular, there exists $\al\in\Phi(G)$ such that $\pm\al(s)=1$ (Proposition~\ref{re3}). Since  the spectrum of $s$  on $V_{\mu_m}$ is not almost simple, there are distinct short roots $\beta,\gamma$ such that $\beta(s)=\gamma(s)\neq 1$.  Then Lemma~\ref{wtlattice} implies that $\pm\al,\beta,\gamma$ are weights of $V_\mu$, and the result follows.

For (2), first note that the \mult of the weight $0$ in $V_{\omega_a}$ is greater than $1$ unless
$(G, p)=(A_2,3)$ (here we again rely on the prime restrictions in the hypotheses). This case
is considered in (ii). In all other cases, saying that  the spectrum of  $s$ on $V_{\om_a}$ is not almost
simple  means that  there are two roots $\al,\beta$ such that $\al(s)=\beta(s)\neq 1$.  As the weights of
$V_{\omega_a}$ occur as weights of $V_{\mu}$ and the weight $0$ occurs in $V_{\mu}$ with \mult greater than $1$,
the result follows.

Finally, the case  (3) follows as (iv) above, where one has to replace $V_{\mu_m}$ by $V_{\om_a}$ and
``short roots''
by ``roots''. \end{proof}

We complete this section with a straightforward observation about the natural modules for classical groups.

\begin{lemma}\label{co1} Let $G$ be a classical type group and assume $p\ne 2$ when $G$ is of type $B_n$. Let
$V=V_{\om_1}$ and  $s\in G$ be a non-central semisimple element.

$(1)$
For $G=A_n$ or $C_n$, if $s$ is regular, then $s$ has simple spectrum on $V$

$(2)$  Let $G=B_n$. Then $s$ is regular \ii the \mult of the \ei $-1$ on $V$ is at most $2$ and the other \ei multiplicities are equal to  $1$.

$(3)$ Let  $G=D_n$. Then $s$ is regular \ii the multiplicities  of the \eis $1$ and $-1$ on $V$
are at most $2$ and the other \ei multiplicities are equal to  $1$. In addition, if the spectrum of $s$
on $V$ is not almost simple then that of $s$ on $V_{\om_2}$ is not almost simple.

$(4)$ If $s$ is  regular then the spectrum of $s$   on $V$ is almost simple unless $G=D_n$, $p\neq 2$ and
$1,-1$ are \eis of $s$ on V, each of \mult $ 2$. \el

\begin{proof} (1) This is straightforward and well known.

For the remainder of the proof, we take $T$ to be the maximal torus consisting of the diagonal matrices in the image of the natural representation of $G$. We now turn to  (2) and the first statement of (3). Observe that $\Om(V)$ consists of
the weights $\pm\ep_i$, $1\leq i\leq n$, together with the weight $0$ in case  $G=B_n$.
In addition, $s$ is regular \ii $\al(s)\neq1$ for every root $\al$.
Set $a_i=\ep_i(s)$ and recall that $\Phi(D_n) =  \{\pm\ep_i\pm\ep_j\ |\ 1\leq i<j\leq n\}$ and
$\Phi(B_n) = \{\pm\ep_i\pm\ep_j, \pm\ep_r\ |\ 1\leq i<j\leq n,1\leq r\leq n\}$.
So $s$ is regular \ii $a_i\neq a_j$ and $a_i\neq a_j\up$ for every $i\neq j$, and if in addition, for $G=B_n$, $a_i\neq 1$
for all $1\leq i\leq n$. So if $G = B_n$, we see that $s$ is regular if and only if either all of the eigenvalues
$a_1^{\pm 1}, a_2^{\pm1},\dots,a_n^{\pm1}$ are distinct and distinct from $1$, or there exists a unique $i$ with $a_i=a_i^{-1}$. If $a_i=a_i^{-1}=- 1$, then  $s$ is regular if and only if all eigenvalues of $s$ on $V$ different from $-1$ occur with multiplicity $1$,
and
$-1$ occurs with multiplicity at most $2$. Now if $G=D_n$, then $s$ is regular if and only if $a_1^{\pm},\dots,a_n^{\pm}$ are distinct or there exists $1\leq i\leq n$ such that $a_i=a_i^{-1}$. In the latter case, $s$ is regular if and only if  all eigenvalues different from $a_i$ occur with multiplicity $1$ and
the eigenvalue $a_i$ can occur with multiplicity at most $2$, as claimed.

For the final statement of (3), let $G=D_n$ and suppose that the spectrum of $s$ on $V$ is not almost simple.
 Then, without loss of generality, we may assume
  $a_i=a_j$ for some $1\leq i\ne j\leq n$.
 Then $(\ep_i-\ep_k)(s)=(\ep_j-\ep_k)(s)$ and $(-\ep_i-\ep_k)(s)=(-\ep_j-\ep_k)(s)$ for every $k\neq i,j$.
 Recall that the non-zero weights of $V_{\omega_2}$ are the roots in $\Phi(G)$, and the zero weight occurs with
 multiplicity at least $2$.
 Assume for a contradiction that the spectrum of $s$  on $V_{\om_2}$ is almost simple.
 Then $(\ep_i-\ep_k)(s)=(\ep_j-\ep_k)(s)=(-\ep_i-\ep_k)(s)=
 (-\ep_j-\ep_k)(s)=1$, whence $-\ep_i(s)=\ep_i(s) = \ep_k(s)$ for all $1\leq k\leq n$.
 As $s\notin Z(G)$, we get a contradiction.

(4) This follows from $(1),(2)$ and $(3)$. 
\end{proof}



\section{Reduction theorem, and  proof of Theorem~\ref{ag8}}\label{sec:reduction}

Let $S\subset {\rm GL}(V)$ be an abelian subgroup and let $\Irr(S)$ denote the set of
irreducible $F$-linear representations of $S$ and write $1_S$ for the trivial representation. For $\eta\in \Irr S$, set
$V_S(\eta)=\{v\in V: sv=\eta(s)v$ for all $s\in S\}$. If $V_S(\eta)\ne \{0\}$, we say $\eta$ is an $S$-weight of $V$ and
we call
$V_S(\eta)$ the \emph{$\eta$-weight space for $S$}. As throughout $G$ is a  simple  algebraic group defined
over $F$ and $
T\subset G$ is a maximal torus of $G$. If $V$ is a rational $G$-module then $V$ is a direct sum of
$T$-weight spaces and
for any subgroup $S\subseteq T$, these weight spaces are $S$-invariant.
Thus for $\eta\in\Irr(S)$, $V_S(\eta)$ is a sum of $T$-weight spaces of $V$.
We establish here a result about such subgroups $S$ of $T$, and later will apply this to the case where
$S$ is the subgroup generated by an element $s\in T$.

Recall (see for instance \cite[\S 7]{MT}) that for
any rational representation $\rho:G\to{\rm GL}(V)$, we have a corresponding representation of $\Lie$, namely
$d\rho: \Lie\to {\rm {Lie}}(\GL(V))$.
For $g\in G$, let $t_g:G\to G$ denote the automorphism induced by conjugation by $g$. Then using the basic
definitions and
properties of the differential, we have that $t_{\rho(g)}\circ\rho = \rho\circ t_g$ and so
$${\rm Ad}(\rho(g))\circ d\rho = d\rho\circ{\rm Ad}(g).$$

\begin{theo}\label{ac4} {\rm (Reduction theorem)}
 Let $G$ be a simple  algebraic group, $T$ a maximal torus of $G$, and
$S\subseteq T$ a  subgroup such that $C_G(S)\neq G$.
Let $V$ be an \ir $G$-module with $p$-restricted highest weight. 
Let $V_S(\eta)$ be an $S$-weight space of $V$, for some  $\eta\in \Irr S$. Suppose
that $\dim V_S(\eta)=k>1$ and that all other $S$-weight spaces on $V$
are of dimension $1$. Then all non-zero $T$-weights of $V$ are of \mult $1$. \end{theo}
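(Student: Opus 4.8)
The plan is to translate the hypothesis on $S$-weight spaces into a statement about $T$-weight multiplicities, and then to rule out a non-zero $T$-weight of multiplicity $\geq 2$ by an $A_1$-computation along a suitable root. Since $S\subseteq T$, the $T$-weight decomposition refines the $S$-weight decomposition: $V_S(\eta')=\bigoplus_{\mu|_S=\eta'}V_\mu$, the sum over $T$-weights $\mu$ restricting to $\eta'$. Hence for each $S$-weight $\eta'\neq\eta$ the space $V_S(\eta')$ is $1$-dimensional, so there is a unique $T$-weight restricting to $\eta'$ and it has multiplicity $1$. Consequently every $T$-weight of multiplicity $\geq 2$ restricts to $\eta$, and it suffices to show that no non-zero $T$-weight has multiplicity $\geq2$. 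I also record two facts. First, weight multiplicities are $W(G)$-invariant, since a representative of $w\in W(G)$ in $N_G(T)$ maps $V_\mu$ onto $V_{w\mu}$; thus the set $\Omega_{\geq 2}$ of weights of multiplicity $\geq 2$ is $W(G)$-stable, and every element of it restricts to $\eta$. Second, evaluating the displayed identity ${\rm Ad}(\rho(g))\circ d\rho=d\rho\circ{\rm Ad}(g)$ at $g=s\in S$ yields $\rho(s)\,d\rho(X_\beta)\,\rho(s)\up=\beta(s)\,d\rho(X_\beta)$ for every root $\beta$; so $d\rho(X_\beta)$ sends $V_\mu$ into $V_{\mu+\beta}$ and sends $V_S(\eta')$ into $V_S((\beta|_S)\eta')$.

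Next, since $C_G(S)\neq G$ the group $S$ is not central, so $\beta(s)\neq 1$ for some root $\beta$ and some $s\in S$; fix such a $\beta$. For any $T$-weight $\mu$ with $\mu|_S=\eta$, the weights $\mu\pm\beta$ restrict to $(\beta|_S)^{\pm 1}\eta\neq\eta$, so $V_{\mu\pm\beta}$ lies in one of the $1$-dimensional $S$-weight spaces; hence $\dim V_{\mu\pm\beta}\leq 1$. Now suppose, for a contradiction, that $\Omega_{\geq 2}$ contains a non-zero weight $\nu$. As $G$ is simple, $W(G)$ acts irreducibly on $\Omega\otimes_{\ZZ}\RR$, so the orbit of the non-zero vector $\nu$ spans this space; in particular $\langle w\nu,\beta^\vee\rangle\neq 0$ for some $w\in W(G)$. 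Replacing $\nu$ by $w\nu$ --- which is again a non-zero weight in $\Omega_{\geq 2}$ restricting to $\eta$ --- I may assume $c:=\langle\nu,\beta^\vee\rangle\neq 0$.

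The heart is an $\mathfrak{sl}_2$-computation for the root $\beta$. Set $e=d\rho(X_\beta)$ and $f=d\rho(X_{-\beta})$, so that $ef-fe=d\rho(H_\beta)$ acts on $V_\nu$ as the scalar $\bar c\in F$, the image of $c$. On $V_\nu$ the operator $ef$ factors through $V_{\nu-\beta}$ (first $f$, then $e$) and $fe$ factors through $V_{\nu+\beta}$ (first $e$, then $f$); since $\dim V_{\nu\pm\beta}\leq 1$, both $ef|_{V_\nu}$ and $fe|_{V_\nu}$ have rank at most $1$. Therefore $\bar c\cdot{\rm Id}_{V_\nu}=(ef-fe)|_{V_\nu}$ has rank at most $2$, and if $\bar c\neq 0$ this forces $\dim V_\nu\leq 2$. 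This already eliminates all multiplicities $\geq 3$.

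The main obstacle is to exclude multiplicity exactly $2$, together with the degenerate case $\bar c=0$ (that is, $p\mid c$), which the crude rank bound does not cover. For the degeneracy I would arrange $\langle\nu,\beta^\vee\rangle\not\equiv 0\pmod p$ by a better choice of $\beta$ and of the conjugate of $\nu$ --- this is presumably where the $p$-restrictedness of the highest weight enters, to control the residues $\langle\nu,\beta^\vee\rangle\bmod p$ as $\beta$ ranges over the roots with $\beta|_S\neq 1$. For multiplicity exactly $2$ the commutator bound alone is consistent, so a finer analysis is required: I would take $\nu$ maximal in $\Omega_{\geq 2}$ with respect to $\prec$, so that by maximality \emph{every} positive root operator has rank at most $1$ on $V_\nu$, and then combine the $A_1$-relations for $\beta$ with those for a second root $\beta'$ with $\beta'|_S\neq 1$ (including the structure constant of $[X_\beta,X_{\beta'}]$) in order to produce a vector of $V_\nu$ annihilated by all positive root operators, i.e. a maximal vector of weight $\nu\neq\lambda$, contradicting the irreducibility of $V$. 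I expect this elimination of a genuine multiplicity-$2$ non-zero weight in small characteristic to be the technical heart of the proof.
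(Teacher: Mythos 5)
Your setup is sound and matches the paper's starting point: the $T$-weight decomposition refines the $S$-weight decomposition, every $T$-weight of multiplicity at least $2$ restricts to $\eta$, and for a root $\beta$ with $\beta|_S\neq 1_S$ the operators $d\rho(X_{\pm\beta})$ carry $V_S(\eta)$ into one-dimensional $S$-weight spaces, so they have rank at most $1$ there. Your $\mathfrak{sl}_2$ rank computation is also correct as far as it goes. But the two cases you defer are precisely the content of the theorem, and neither is closed, so the proposal has genuine gaps. First, multiplicity exactly $2$: on a $2$-dimensional $V_\nu$ the identity $\bar c\,{\rm Id}=(ef-fe)|_{V_\nu}$ with rank at most $2$ yields no contradiction whatever $\bar c$ is, and this is not a removable defect of bookkeeping --- even in characteristic $0$, the data you use (one root $\beta$ with $\dim V_{\nu\pm\beta}\leq 1$) is consistent with $\dim V_\nu=2$, so no purely local single-root argument can finish; some global input is required. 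Your proposed remedy (take $\nu$ maximal in $\Omega_{\geq 2}$ and manufacture a vector killed by all positive root operators) is not carried out, and note that even if such a vector $v$ were produced, ``contradicting the irreducibility of $V$'' is not immediate: in characteristic $p$, annihilation by ${\rm Lie}(U)$ does not give a $B$-stable line, so $U({\mathfrak n}^-)v$ is only a proper non-zero $\Lie$-submodule; to conclude you need Curtis's theorem that a $p$-restricted irreducible $G$-module is $\Lie$-irreducible --- the one place where $p$-restrictedness genuinely enters, and which your sketch never invokes. Second, the degenerate case $p\mid\langle w\nu,\beta^\vee\rangle$ for all $w\in W(G)$ and all admissible $\beta$ is simply not excluded: $p$-restrictedness of the \emph{highest} weight gives no control over these pairings for an arbitrary subdominant weight $\nu$, so ``arrange by a better choice'' is a hope, not an argument.

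For comparison, the paper's proof avoids the $\mathfrak{sl}_2$ relation and all characteristic issues. It uses your rank-$\leq 1$ mechanism for \emph{all} roots $\alpha\in\pm\Pi$ at once: each $\ker\bigl(d\rho(X_\alpha)|_E\bigr)$, $E=V_S(\eta)$, has codimension at most $1$, and the common kernel is annihilated by all of $\Lie$, hence is $\{0\}$ by Curtis's theorem; this gives $\dim E\leq 2n$. Since the weight spaces of all $W(G)$-conjugates of a multiplicity-$\geq 2$ weight $\mu$ lie in $E$, and a non-zero weight has orbit length at least $n+1$, one gets $2(n+1)\leq 2n$ unless $\mu=0$ --- a contradiction uniform in the multiplicity and in $p$. (The paper needs a second case, via a closed-subsystem argument, when some roots restrict trivially to $S$; your single-root reduction sidesteps that, but at the price of the fatal weaknesses above.) Grafting Curtis's theorem and the orbit-length count onto your setup essentially recovers the paper's proof; without them the argument does not go through.
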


\begin{proof} Set   $E=V_S(\eta)$. For $\mu\in\Omega(V)$, write $M_\mu$ for the
  $T$-weight space of $V$ associated to $\mu$. Suppose that $\dim M_\mu\geq 2$, for some
  $\mu\in \Omega(V)$. Then $M_\mu\subset E$. As $\dim M_\mu=\dim
  M_{w(\mu)}$ for any $w\in W$, we necessarily have $M_{w(\mu)}\subset E$.
  Now let $\rho:G\to {\rm GL}(V)$ be the corresponding rational representation of $G$. For a root
  $\alpha\in \Phi$, $\alpha$
  induces a $1$-dimensional
representation $\lambda_\alpha$ of the group $S$.

Consider first the case where $\lambda_\alpha\ne 1_S$, for all $\alpha\in \Phi$.
Recall the notation $X_\alpha\in \Lie$, a root vector associated to the root $\alpha$, a
fixed element which spans the Lie algebra of the associated root group.
Then $d\rho(X_\alpha) E\subset V_S(\eta\lambda_\alpha)$.
Since $\eta\lambda_\alpha\ne \eta$, this latter $S$-weight space is of dimension
at most $1$. Hence $K_\alpha: = \ker((d\rho(X_\alpha))\mid_E)$ is of dimension at least
$k-1$. Setting $K_1 = \cap_{\alpha\in \pm\Pi}K_\alpha$, we see that $K_1\subset V$ is a
proper $\Lie$-submodule on which
$\Lie$ acts trivially. But by \cite{Cu}, $V$ is an irreducible $\Lie$-module,
and so  $K_1=\{0\}$.  Therefore, $k=\dim E\leq 2n$, where $n$ is the rank of $G$.
We can now show that $\mu=0$; for otherwise
the $W$-orbit of $\mu$ is of length at least $n+1$ (the exact
values are in the \cite[Table 1]{Z}).    Therefore, $\dim \sum
M_{w(\mu)}\geq 2(n+1)$, which is a contradiction.

Consider now the case where  there exists $\alpha\in\Phi$ such that $\lambda_\alpha=1_S$.
Set $M':=\sum_{w\in W} M_{w(\mu)}$, so that $M'\subseteq E$. Let
$R_0=\{\al\in\Phi: \lambda_\al=1_S\}$, $R_2=\Phi\setminus R_0$. Since $S$ is non-central,
$R_0\ne \Phi$ and $R_2\ne\emptyset$.
Let $R_1$ be the set of roots $\al$ such that $\dim (d\rho(X_\al) M')\leq 1.$  By the considerations of
the first case above, $R_2\subseteq R_1$. Moreover, we claim that $R_1$ is $W$-stable. Indeed for
$w\in W$, choose  $\dot{w}\in N_G(T)$ such that $w=\dot{w}T$. Then $$\rho(\dot{w})d\rho(X_\alpha) M' =
\rho(\dot{w})d\rho(X_\alpha)\rho(\dot{w})^{-1}\rho(\dot{w})M' =
{\rm Ad}(\rho(\dot{w}))(d\rho(X_\alpha)) M'.$$ By the remarks preceding the statement of the result, this
latter is equal
to $$ d\rho({\rm Ad}(\dot{w})X_\alpha) M' = d\rho(X_{w(\alpha)})M'$$ and since
$\dim (\rho(\dot{w})(d\rho(X_\al) M'))=\dim (d\rho(X_\al) M')$, we have the claim. Now, if all roots of
$\Phi$ are
of the same length then $R_1=\Phi$, and we conclude as in
the first case.

Hence we may assume that $\Phi$ has two root lengths and  that  the roots of $R_1$ are of a single length.
Note that $R_0=-R_0$ and $\beta,\gamma\in R_0$ implies $\beta+\gamma\in R_0$ provided
$\beta+\gamma $ is a root.  This implies (see for example \cite[B.14]{MT}) that $R_0$ is a root system,
that is, $R_0$ is a closed subsystem of $\Phi$.
Moreover, $R_0$ is of maximal rank (equal to the rank of $\Phi$) as otherwise, by \cite[B.18]{MT},
$R_0$ lies in some subsystem corresponding to a proper subset of $\Pi$, in which case $R_2$, and so
$R_1$ has roots of both lengths.  So  $R_0$ is a subsystem of maximal rank, and by the classification of such,
\cite[B.18]{MT}, one checks that in every case
$\Phi\setminus R_0=R_2$ again contains roots of both lengths and we conclude as above.\end{proof}

\begin{rem}\label{nonrest} If $\om=p^k\om'$, with $\om'$ $p$-restricted,
then the weights of $V_\om$ are $p^k\mu$ for $\mu$ a weight of $V_{\om'}$. Then $p^k\mu(s)=\mu(s^{p^k})$.
As the mapping $x\ra x^{p}$ for $x\in F$ is bijective on $F$, the spectrum of $s$ on $V_\om$ is almost simple \ii the spectrum of $s$ on $V_{\om'}$ is almost simple.

\end{rem}

We now take $S$ to be generated by a
single element $s\in T$ and consider the case of tensor-decomposable irreducible representations.

\begin{lemma}\label{gc3} Let $s\in T$ be a
non-central element. Let $\om$ be a dominant weight which is not $p$-restricted and
not of the form $p^k\mu$ for $\mu$ a $p$-restricted weight. Suppose that the spectrum of $s$
 on $V_\om$ is almost simple. Then all weights of $V_\om$ are of \mult $1$.\el

\begin{proof} 
By Steinberg's tensor product theorem,
$V_\om=V_{p^{k_1}\mu_1}\otimes V_{p^{k_2}\mu_2}\otimes \cdots \otimes V_{p^{k_t}\mu_t}$, where $t>1$ and
$\mu_1\ld \mu_k$ are non-zero $p$-restricted weights and $(k_1,\dots,k_t)$ are distinct non-negative integers.
Then Lemma~\ref{td2} implies that the spectrum of $s$  on each  tensor factor is simple so the weights of each tensor factor have
multiplicity $1$.  Furthermore, \cite[Proposition 2]{SZ1} implies that the weights of $V_\om$ are of \mult 1 unless
there exists $1\leq j<t$ such that $k_{j+1} = k_j+1$ and one of the following holds:
\begin{enumerate}[]
\item{\rm{(i)}} $G=C_n$, $p=2$, $\mu_j=\om_n$, $\mu_{j+1}=\om_1;$
\item{\rm{(ii)}} $G=G_2$, $p=2$, $\mu_j=\om_1$, $\mu_{j+1}=\om_1;$
\item{\rm{(iii)}} $G=G_2$,  $p=3$, $\mu_j=\om_2$, $\mu_{j+1}=\om_1$.
\end{enumerate}

Moreover, in each of the cases (i), (ii) and (iii), the module $V_{\mu_j}\otimes V_{p\mu_{j+1}}$ has a weight of multiplicity
greater than $1$. Hence if one of the three cases occurs, we deduce that $t=2$ and so we can also assume
that $j=1$ and $k_1=0$, that is, $V_\om=V_{\mu_1}\otimes V_{p\mu_2}$. We consider the above cases in detail.

Case (i): Take $T$ to be the set of diagonal matrices in the image of the natural representation of $G$.
Here  $\Om(V_{\om_n})= \{\pm \ep_1\pm\cdots\pm\ep_n\}$ and  $\Om(V_{2\omega_1})=\{\pm 2\ep_1\ld \pm2\ep_n\}$.
(As usual, we have adopted the notation of \cite[Planche III]{Bo}.) Let $\nu$ be a weight of $V_{\om_n}$ with positive
signs of both $\ep_i$ and $\ep_j$, for some $1\leq i,j\leq n$, $i\ne j$. As $\nu-2\ep_i$ and $\nu-2\ep_j$ are weights of $V_{\om_n}$, it follows that $\nu$ is
also a weight of $V_{\om}$ with \mult at least 2. This remains true for weights
where both $\ep_i$ and $\ep_j$ have coefficient $-1$ or have opposite coefficients.
  \itf the restriction of $V_{\om_n+2\om_1}$ to $T$ contains a direct sum of at least two  copies of
$V_{\om_n}|_T$. Therefore, every \ei of $s$ on $V_{\om_n}$ is also an
\ei of $s$ on $V_{\om_n+2\om_ 1}$, and occurs with \mult at least $2$. So this case is ruled out as the spectrum of $s$  on
$V_{\omega_n}$ is simple.

Case (ii): Here the weights of $V_{\omega_1}$ are the short roots of $\Phi$, and the following weights occur with
multiplicity $2$ in $V_\om$: $3\alpha_1+\alpha_2$, $3\alpha_1+2\alpha_2$.
Since the spectrum of  $s$ on $V_\om$ is  almost simple, these roots must all take equal value on $s$. In 
particular,
$\alpha_2(s)=1$. But now the eigenvalue $5\alpha_1(s)$ occurs with multiplicity $2$ as well as
$3\alpha_1(s)$, implying that $\alpha_1(s)=1$ as well, contradicting the fact that $s$ is non-central.

Case (iii): This case is similar. Here the weights of $V_{\om_2}$ are the long roots of $\Phi$ and the
 zero weight, and the weights of $V_{\om_1}$ are the short roots and the zero weight. We find that
each of the weights $3\alpha_1+\alpha_2$ and $\alpha_2$ occur with multiplicity $2$, and deduce that
$\alpha_1(s)=1$. But now the eigenvalue $\alpha_2(s)$ occurs with multiplicity greater than $1$, as well as the
eigenvalue $1$, and so $\alpha_2(s)=1$ as well, again contradicting $s$ non-central. \end{proof}


\begin{proof}[Proof of Theorem~$\ref{ag8}$]   Using Lemma~\ref{nm1}, we see that assertion (1) follows from assertion (2).
  We apply Theorem~\ref{ac4}, Remark~\ref{nonrest} and Lemma~\ref{gc3} to obtain the reverse implication. \end{proof}


\section{Commuting subgroups and a partial proof of Theorem~\ref{c99}}\label{sec:com}

An essential element of our proof of Theorem~\ref{c99} is an application of Theorem~\ref{sz5}, which allows
us to treat many of the groups and representations in a uniform way. (See Proposition~\ref{vg1} below.)
Let $s\in G$ be a non-regular semisimple element. In order to apply Theorem~\ref{sz5}, we need to
find a pair of subsystem subgroups $K,Y$ such that $[K,Y]=1$, $[K,s]=1$ and $[s,Y]\neq 1$. For technical
reasons,
 it will suffice to do this for groups other than $B_n,D_n$, and $G_2$.

 \begin{lemma}\label{ab1} Let $G=\SL_n(F)$, $n>3$, and let $s\in T\setminus Z(G)$ be a non-regular element.
 Then there are simple subsystem subgroups $K,Y$, normalized by $T$, such that $[K,Y]=1$, $[K,s]=1$
and $[s,Y]\neq1$, unless $n=4$ and, up to conjugacy in $G$,
$s=\diag(a,a,a\up,a\up)$ or   $s=\diag(a,a,-a\up,-a\up)$.\end{lemma}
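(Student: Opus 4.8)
The plan is to translate the three group-theoretic requirements into purely combinatorial conditions on the eigenvalues of $s$. Recall that the closed subsystems of $\Phi=\{\ep_i-\ep_j\mid i\ne j\}$ (type $A_{n-1}$) are exactly those of the form $\{\ep_i-\ep_j\mid i,j\in B_k\text{ for some }k\}$ attached to a partition $\{1\ld n\}=B_1\sqcup\cdots\sqcup B_r$ (the relation $i\sim j \Leftrightarrow \ep_i-\ep_j\in\Psi$ is forced to be an equivalence relation by closure). Hence a \emph{simple} subsystem subgroup normalized by $T$ is, up to the obvious identification, a group $\SL_I:=G(\{\ep_i-\ep_j\mid i,j\in I\})$ acting on the coordinates indexed by a subset $I\se\{1\ld n\}$ with $|I|\geq 2$ and fixing the others. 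Writing $s=\diag(a_1\ld a_n)$ with $\prod a_i=1$, one checks directly that for such subgroups $[\SL_I,\SL_J]=1$ \ii $I\cap J=\emptyset$; that $[\SL_I,s]=1$ \ii $s|_I$ is scalar, i.e.\ the $a_i$ $(i\in I)$ are all equal; and that $[s,\SL_J]\ne 1$ \ii $s|_J$ is non-scalar. Finally, by Proposition~\ref{re3}, $s$ is non-regular \ii $a_i=a_j$ for some $i\ne j$, and $s$ is non-central \ii it has at least two distinct eigenvalues.

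So I would let $b_1\ld b_m$ be the distinct eigenvalues of $s$ with multiplicities $n_1\geq n_2\geq\cdots\geq n_m$; non-centrality gives $m\geq 2$ and non-regularity gives $n_1\geq 2$. The goal becomes to produce disjoint subsets $I,J$, each of size at least $2$, with $s|_I$ scalar and $s|_J$ non-scalar; then $K=\SL_I$ and $Y=\SL_J$ are as required. First I would treat $m\geq 3$: take $I$ to be two indices of eigenvalue $b_1$; the complement still contains indices for $b_2$ and $b_3$, so $J$ consisting of one $b_2$-index and one $b_3$-index is disjoint from $I$, of size $2$, and non-scalar. Next, for $m=2$ with $n_1\geq 3$, take $I$ to be two $b_1$-indices; the complement then retains at least one $b_1$-index and all $n_2\geq 1$ of the $b_2$-indices, so $J$ consisting of one $b_1$-index and one $b_2$-index works.

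The only surviving case is $m=2$ with $n_1=2$. Since $n>3$ rules out $n_2=1$ (which would force $n=3$), this forces $n_1=n_2=2$ and $n=4$. Here every constant block of size $\geq 2$ is exactly one of the two eigenvalue-pairs, whose complement is the other eigenvalue-pair and hence again constant; thus no admissible $J$ exists, and the required subgroups genuinely do not exist. To match this case with the two listed normal forms I would invoke $\det s=1$: writing $s=\diag(a,a,b,b)$ with $a\ne b$ gives $a^2b^2=1$, whence $b=a\up$ or $b=-a\up$, i.e.\ $s=\diag(a,a,a\up,a\up)$ or $s=\diag(a,a,-a\up,-a\up)$ up to permutation of the diagonal, that is, up to conjugacy in $G$.

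I expect the only real subtlety to lie in this last step: checking that $n_1=n_2=2$, $n=4$ is not merely where my particular construction fails but where \emph{no} choice of $I,J$ can succeed, which is precisely what makes the ``unless'' clause sharp. This depends on having first classified every simple subsystem subgroup as one of the $\SL_I$, so that the combinatorial obstruction (four indices cannot be split into two size-$2$ blocks, one constant and one not, when the multiplicities are $2,2$) is genuinely exhaustive; the reduction to the two displayed elements is then the one-line determinant computation above.
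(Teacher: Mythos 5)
Your proof is correct and follows essentially the same route as the paper's: both produce the required pair as block subgroups $\SL_I$, $\SL_J$ acting on disjoint coordinate sets, with $s$ scalar on $I$ and non-scalar on $J$, and both isolate the failure case $n=4$, $s=\diag(a,a,b,b)$ via the determinant condition $a^2b^2=1$, whence $b=\pm a\up$. The one genuine addition in your write-up is the verification that in the exceptional case \emph{no} admissible pair $K,Y$ exists (resting on the classification of closed subsystems of $A_{n-1}$ by partitions of the index set); the paper's proof only shows where its construction breaks down and asserts the impossibility, without proof, in the remark following the lemma.
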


\begin{proof} We take $T$ to be the torus of diagonal matrices in $G$. As $s$ is non-regular and non-central, we may assume that $s=\diag(b,b,a_3\ld a_n)$, where
  $a_3\neq b$.
Suppose first that $a_3\neq a_i$ for some $i>3$. Set $K=\diag (\SL_2(F), \Id_{n-2})$, $Y=\diag({\rm Id}_2,\SL_{n-2}(F))$.
Next, suppose $a_3=\cdots =a_n$. If $n>4$ then we can take
$Y=\diag(1,\SL_{2}(F),\Id_{n-3})$ and $K=\diag(\Id_{n-2}, \SL_2(F))$. If $n=4$, then $s=\diag(b,b,a,a)$ and $b^2a^2=1$,
whence
$b=\pm a\up$.\end{proof}

\begin{remk} If $G=\SL_4(F)$, and
  $s=\diag(\lam,\lam,\lam^{-1},\lam^{-1})$ or $s=\diag(\lam,\lam,-\lam^{-1},-\lam^{-1})$, for $\lam\in F$,  $\lam^4\neq 1$,
  then $s$ is non-regular, non-central, and it is impossible to find a pair of subsystem subgroups  $K$, $Y$ such that
  $[s,K]=1$ and $[s,Y]\neq1$.
 Moreover, the Jordan form of $s$ on the exterior square of the natural $4$-dimensional module
 is $\diag(\lam^2,\lam^{-2},1,1,1,1)$, which is non-central with almost simple spectrum.\end{remk}

\begin{lemma}\label{a12} Let $G=C_n$, $n>1$, and let $s\in T\setminus Z(G)$ be a non-regular element.
 Then there are simple subsystem subgroups $K$, $Y$ of $G$, normalized by $T$, such that $[K,Y]=1$, $[K,s]=1$
 and $[s,Y]\neq1$, unless $n=2$ and  with respect to  an ordered  symplectic basis  $(e_1,f_1,e_2,f_2)$
 of $V_{\omega_1}$,
the Jordan form of s on the natural G-module is  either $\diag(a,a\up,a,a\up)$, for $\pm 1\neq a\in F$,  or
$s=\pm\diag(1,1,-1,-1)$, for $p\ne 2$.\end{lemma}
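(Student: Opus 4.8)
The plan is to work explicitly inside $G=C_n=\Sp_{2n}(F)$, taking $T$ to be the diagonal torus with respect to a fixed symplectic basis, so that a semisimple $s\in T$ has the form $s=\diag(a_1,a_1\up,a_2,a_2\up,\ldots,a_n,a_n\up)$ with $a_i=\ep_i(s)$. The roots are $\Phi(C_n)=\{\pm\ep_i\pm\ep_j\ (i<j),\ \pm2\ep_i\}$, and by Proposition~\ref{re3}, $s$ is non-regular exactly when some root vanishes on $s$, i.e.\ either $a_i=a_j^{\pm1}$ for some $i\neq j$ (from the roots $\pm\ep_i\mp\ep_j$) or $a_i^2=1$ for some $i$ (from the root $2\ep_i$). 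The goal is to produce, away from the exceptional $n=2$ cases, a pair of commuting simple subsystem subgroups $K,Y$ normalized by $T$ with $s$ centralizing $K$ but not $Y$. The natural building blocks are short-root $\SL_2$'s, namely $G(\{\pm(\ep_i-\ep_j)\})$, which act on the pair of coordinate planes indexed by $i,j$ and commute with everything supported on the complementary coordinates; and long-root $\Sp_2=\SL_2$'s, namely $G(\{\pm2\ep_i\})$, acting on a single symplectic pair.

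First I would treat the generic non-regular case where the vanishing root involves two distinct indices, say $a_1=a_2^{\pm1}$ (after relabeling and possibly replacing $a_2$ by $a_2\up$, which is harmless since both $\ep_2,-\ep_2$ are coordinates of $s$). Then $s$ centralizes the short-root subgroup $K=G(\{\pm(\ep_1-\ep_2)\})$ supported on coordinates $1,2$. For $Y$ I would try to find a further simple subsystem subgroup supported on the complementary coordinates $\{3,\ldots,n\}$ on which $s$ acts non-centrally: if $n\geq3$ and $s$ is genuinely non-central, at least one of the complementary coordinates gives a root not vanishing on $s$, and one can take $Y$ to be the appropriate long-root or short-root $\SL_2$ there. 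The case where the only vanishing roots are of the form $2\ep_i$, i.e.\ $a_i=\pm1$, is handled analogously: then $s$ centralizes the long-root $\SL_2$ supported on the $i$-th pair, and one seeks a commuting non-centralized $Y$ on the other coordinates.

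The delicate point, and the place where the stated exceptions for $n=2$ emerge, is that $C_2$ has rank only $2$, so there is no room for two commuting simple subsystem subgroups supported on disjoint coordinate blocks unless both are the two short/long $\SL_2$'s of an $A_1\times A_1$ subsystem. In $\Phi(C_2)=\{\pm\ep_1\pm\ep_2,\pm2\ep_1,\pm2\ep_2\}$ the orthogonal pairs of roots are $\{\pm(\ep_1-\ep_2)\}\perp\{\pm(\ep_1+\ep_2)\}$ and $\{\pm2\ep_1\}\perp\{\pm2\ep_2\}$. I expect the main obstacle to be the careful case analysis showing that precisely when $s=\diag(a,a\up,a,a\up)$ (so $a_1=a_2$, forcing $s$ to centralize \emph{both} $\SL_2$'s of the short-root pair, since $(\ep_1+\ep_2)(s)=a^2$ need not be $1$) or $s=\pm\diag(1,1,-1,-1)$ with $p\neq2$, every candidate pair $K,Y$ either fails to have $[K,s]=1$ or fails to have $[s,Y]\neq1$. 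I would verify this by enumerating the two $A_1\times A_1$ decompositions of $\Phi(C_2)$ and checking on which of the four root directions $s$ is trivial; for the excluded $s$ one finds $s$ either centralizes both factors of a decomposition or neither factor is centralized, leaving no admissible $(K,Y)$. Conversely, for every other non-regular non-central $s\in C_2$ at least one decomposition yields a valid pair, completing the argument.
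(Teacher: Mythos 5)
Your global strategy (explicit coordinates in $\Sp_{2n}(F)$, non-regularity read off from vanishing roots, $K$ and $Y$ built from long- and short-root $\SL_2$'s) is the same in spirit as the paper's, but two of your key assertions are false, and the first one forces a reorganization of your case division. In your generic case (a vanishing root $\ep_1-\ep_2$, say, and $K$ the short-root $\SL_2$ on coordinates $1,2$), the claim that for $n\geq 3$ and $s$ non-central ``at least one of the complementary coordinates gives a root not vanishing on $s$'' is wrong. Take $s=\diag(a,a\up,a,a\up,1,1)$ in $C_3$ with $a\neq\pm1$: this $s$ is non-regular and non-central, the only complementary coordinate is the third one, and the only roots supported there, $\pm2\ep_3$, vanish on $s$; the same happens for $a_1=a_2=a$, $a_3=\cdots=a_n=1$ in any rank. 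The lemma still holds for such elements, but only by exchanging the roles of the two vanishing roots: $K$ must be the long-root $\Sp_2$ at a coordinate with $a_k=\pm1$, and $Y$ the long-root $\Sp_2$ at a coordinate with $a_j\neq\pm1$. This is precisely why the paper splits the cases the other way round: first ``some $a_i=\pm1$'' (with the involution subcase $s^2=1$, $p\neq 2$ handled separately, taking $Y$ to be an $\Sp_4$-block on two coordinates of opposite sign), and only then ``all $a_i\neq\pm1$'', where the coincidence $a_i=a_j$ must occur, $s$ lies in a Levi $\GL_2(F)\times\Sp_{2n-4}(F)$, and $Y=\Sp_{2n-4}(F)$ is automatically not centralized because every remaining $a_k$ satisfies $a_k^2\neq1$. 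Your priority ordering cannot be patched locally; you need the paper's.

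Second, your $C_2$ analysis enumerates ``the two $A_1\times A_1$ decompositions'' of $\Phi(C_2)$, including the orthogonal short-root pair $\{\pm(\ep_1-\ep_2)\}$, $\{\pm(\ep_1+\ep_2)\}$. Orthogonality is not enough here: $(\ep_1-\ep_2)+(\ep_1+\ep_2)=2\ep_1$ is a root, so this pair is not a closed subsystem, and for $p\neq 2$ the Chevalley commutator formula (structure constant $\pm 2$) gives $[U_{\ep_1-\ep_2},U_{\ep_1+\ep_2}]\neq1$. The point is essential rather than cosmetic: your exceptional element $s=\diag(a,a\up,a,a\up)$ centralizes the first short $A_1$ but not the second (as you yourself note, $(\ep_1+\ep_2)(s)=a^2\neq1$), so if this pair were admissible it would be a valid $(K,Y)$ and $s$ would not be exceptional at all; in particular your statement that for excluded $s$ ``either both factors of a decomposition are centralized or neither'' fails for this pair. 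For $p\neq2$ the only commuting pair of simple subsystem subgroups of $C_2$ is the long-root pair $\Sp_2(F)\times\Sp_2(F)$, and the exceptionality of $\diag(a,a\up,a,a\up)$ and of $\pm\diag(1,1,-1,-1)$ must be (and easily is) checked against that pair alone.
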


\begin{proof} The group $G=C_n=\Sp_{2n}(F)$ contains a maximal rank subsystem subgroup $H$ isomorphic to
  $\Sp_2(F)\times \cdots \times \Sp_2(F)$, so every semisimple element is conjugate to an element of $H$.
  Therefore, we can  write the matrix of $s$ with respect to a suitable basis of the natural $G$-module $V_{\omega_1}$ as $\diag(a_1,a_1\up\ld a_n,a_n\up)$ for some $a_1\ld a_n\in F$.
  By Lemma~\ref{co1}, the diagonal entries of $s$ are not distinct.  Hence either $a_i=\pm 1$
  for some $i\in\{1\ld n\}$, or, replacing some $a_i$ by $a_i\up$, we can assume that $a_i=a_j$ for some
  $1\leq i<j\leq n$.

Suppose first that $a_i=\pm 1$ for some $i\in\{1\ld n\}$ and assume without loss of generality
that $i=1$. If there exists $j$ such that $a_j\neq \pm 1$,  we can assume $j=n$ and then take
$K=\diag(\Sp_2(F),\Id_{2n-2})$,
$Y=\diag(\Id_{2n-2}, \Sp_2(F))$. Otherwise, $s^2=1$ and $p\ne 2$. We can reorder $a_1\ld a_n$ so that
$a_1\neq a_2$, and if $n>2$ we take $Y=\diag(\Sp_4(F),\Id_{2n-4})$, $K=\diag(\Id_{2n-2}, \Sp_2(F))$.  If $n=2$, $s^2=1$ and $p\ne 2$, such a choice is not possible and we
have $s$ as in the final statement. 

Now suppose that $a_i\neq \pm 1$ for all $i\in   \{1\ld n\}$, so there exists $1\leq i < j\leq n$
such that $a_i=a_j$. In this case, there exists a $2$-dimensional totally isotropic subspace of the
underlying $2n$-dimensional symplectic space on which $s$ acts as scalar multiplication.
If $n>2$, then $s$ is contained in a Levi subgroup $L=L_1\times L_2$ of $G$, where
$L_1\cong {\rm GL}_2(F)$ and $L_2\cong \Sp_{2n-4}(F)$.  Moreover $[s,L_1]=1$, so we can take
$K=L_1$, $Y=L_2$. If $n=2$ then $s=\diag(a,a\up,a,a\up)$ as in the statement of the result.
\end{proof}

\begin{lemma}\label{ab2} Let $G\in\{E_6,E_7,E_8, F_4\}$. Let $s\in T\setminus Z(G)$ be a non-regular element.
  Then there  exist simple subsystem  subgroups $K$, $Y$, normalized by $T$, such that $K$ is of type $A_1$,
$[K,Y]=1$, $[K,s]=1$, $[s,Y]\neq 1$. \end{lemma}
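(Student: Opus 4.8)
The plan is to produce $K$ from a root that $s$ fixes and to take $Y$ to be built on the roots orthogonal to it. Since $s$ is non-regular, Proposition~\ref{re3} supplies a root $\alpha\in\Phi(G)$ with $\alpha(s)=1$; I set $K=\langle U_\alpha,U_{-\alpha}\rangle$, a subsystem subgroup of type $A_1$ normalized by $T$. Because $\alpha(s)=1$, the element $s$ centralizes both $U_\alpha$ and $U_{-\alpha}$, so $[K,s]=1$. Writing $\Psi_s=\{\gamma\in\Phi(G):\gamma(s)=1\}$ for the root system of $C_G(s)^\circ$, the hypotheses read $\emptyset\neq\Psi_s\neq\Phi(G)$: the left inequality holds since $\alpha\in\Psi_s$, the right since $s$ is non-central.

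For the simply-laced groups $E_6,E_7,E_8$ I would put $\Phi_\alpha=\{\beta\in\Phi(G):(\beta,\alpha)=0\}$. As all roots have a common length, $(\beta,\alpha)=0$ forces $\alpha\pm\beta\notin\Phi(G)$, so each $U_\beta$ with $\beta\in\Phi_\alpha$ commutes with $U_{\pm\alpha}$ and hence $G(\Phi_\alpha)$ commutes with $K$. Since the roots form a single $W(G)$-orbit, $\Phi_\alpha$ has type $A_5$, $D_6$, $E_7$ respectively, each irreducible, so $Y:=G(\Phi_\alpha)$ is a simple subsystem subgroup normalized by $T$ with $[K,Y]=1$. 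What remains is to choose $\alpha$ so that $[s,Y]\neq1$, which amounts to arranging $\Phi_\alpha\not\subseteq\Psi_s$.

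To see that such a choice is possible I would argue by contradiction. Suppose $\Phi_\alpha\subseteq\Psi_s$ for every $\alpha\in\Psi_s$. Fix one $\alpha\in\Psi_s$ and pick $\gamma\in\Phi(G)\setminus\Psi_s$; since $\pm\alpha$ and all of $\Phi_\alpha$ lie in $\Psi_s$, we get $\gamma\neq\pm\alpha$ and $(\gamma,\alpha)\neq0$. The decisive point is that $\alpha$ and $\gamma$ possess a common orthogonal root $\beta$: they span a subspace of dimension at most $2$, and in $E_6,E_7,E_8$ the orthogonal complement of a subsystem of rank at most $2$ still contains a root (read off from the classification of maximal-rank subsystems, cf. \cite[B.18]{MT}). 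Then $\beta\in\Phi_\alpha\subseteq\Psi_s$, so by hypothesis $\Phi_\beta\subseteq\Psi_s$; but $\gamma\perp\beta$ means $\gamma\in\Phi_\beta\subseteq\Psi_s$, contradicting $\gamma\notin\Psi_s$. This produces the desired $\alpha$ and settles $E_6,E_7,E_8$.

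For $F_4$ the same scheme works, but the two root lengths must be tracked. When $\alpha$ is long, a length count shows $\alpha\pm\beta\notin\Phi(G)$ for $(\beta,\alpha)=0$, so $G(\Phi_\alpha)$, of type $C_3$, still commutes with $K$; when $\alpha$ is short, $G(\Phi_\alpha)$ need not commute with $K$ (two orthogonal short roots can sum to a long root), and I would instead take $Y$ to be the type-$A_3$ subgroup generated by those $\beta$ with $\alpha\pm\beta\notin\Phi(G)$, i.e. the semisimple part of $C_G(K)$. Running the orthogonality argument with these subsystems again reduces to degenerate configurations in which $s$ centralizes every available $Y$; these force $\Psi_s$ to be a single short $A_1$ and are checked by hand. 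I expect this non-simply-laced bookkeeping, together with the verification that two roots always admit a common (and, for $F_4$, suitably long) orthogonal root, to be the main obstacle, the simply-laced reduction via $\Phi_\alpha$ being otherwise routine.
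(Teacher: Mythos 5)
Your simply-laced argument is correct and complete, and it takes a somewhat different route from the paper's. The paper first conjugates $\alpha$ to a simple root; then, in the degenerate case where $s$ centralizes $U_\beta$ for every root $\beta$ orthogonal to $\alpha$, it argues that non-centrality of $s$ forces $[s,U_\gamma]\neq 1$ for some simple root $\gamma$ adjacent to $\alpha$ in the Dynkin diagram, picks a node $\beta$ adjacent to neither $\alpha$ nor $\gamma$, and takes $K=\langle U_{\pm\beta}\rangle$, $Y=\langle U_{\pm\gamma}\rangle$, two commuting $A_1$'s. That argument is uniform over all four groups and needs no knowledge of orthogonal subsystems, whereas your route buys a concrete simple $Y$ of large type ($A_5$, $D_6$, $E_7$) at the cost of the facts that these are the orthogonal complements of a root and that the complement of an $A_2$ in $E_6,E_7,E_8$ (namely $A_2+A_2$, $A_5$, $E_6$) is non-empty; both of those facts are true, and your chain argument through a common orthogonal root $\beta$ is sound.

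For $F_4$, however, there is a genuine gap. You correctly identify the subtlety that for a short root $\alpha$ the subgroup built on all roots orthogonal to $\alpha$ (a $B_3$) does not commute with $K$, and that one must instead take the $A_3$ of long roots orthogonal to $\alpha$ — a point the paper's own first case glosses over — but you then stop short: the ``degenerate configurations'' are never determined, and the one concrete claim you make about them is false. If $\Psi_s=\{\pm\alpha\}$ is a single short $A_1$, the long roots orthogonal to $\alpha$ are certainly not contained in $\Psi_s$, so $s$ fails to centralize your $Y=G(A_3(\alpha))$ and the construction succeeds; that configuration is not degenerate at all. What actually has to be proved is that \emph{no} degenerate configuration exists: if for every long $\alpha\in\Psi_s$ the $C_3$ orthogonal to $\alpha$ lies in $\Psi_s$, and for every short $\alpha\in\Psi_s$ the $A_3$ of long roots orthogonal to $\alpha$ lies in $\Psi_s$, then $\Psi_s=\Phi(G)$, contradicting $s\notin Z(G)$. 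Equivalently, your chain argument needs length-refined orthogonality facts: given $\alpha\in\Psi_s$ and $\gamma\notin\Psi_s$, one must produce a common orthogonal root $\beta$ that is \emph{long} whenever $\alpha$ or $\gamma$ is short, and such $\beta$ cannot always be chosen long (two long roots at sixty degrees, e.g. $\ep_1+\ep_2$ and $\ep_1+\ep_3$, have no common orthogonal long root, so the chain must pass through a short $\beta$ there). This bookkeeping, or the closure computation replacing it (any short root of $\Psi_s$ drags long roots into $\Psi_s$, any long root drags in its whole $C_3$, and iterating fills up $\Phi(F_4)$), is the heart of the $F_4$ case and is missing from your proof.
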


\begin{proof} As $s$ is not regular, $C_G(s)$ contains root subgroups $U_{\pm\alpha}$ for some root
  $\al\in\Phi$.
  Clearly, we can
  assume $\al$ to be a simple root. Moreover, we can assume that
 $\al=\al_1$  if $G\neq F_4$,   otherwise, that $\al=\al_1$ or $\al_4$.

Denote by $R_\al$  the set of  roots orthogonal to $\al$, and observe that $R_\al$ is not empty.
Set $Y=\lan U_{\pm \beta}:\beta\in R_{\al}\ran$ and $K=\langle U_{\pm\alpha}\rangle$. Then $[Y,K]=1$ and
$[K,s]=1$.
If $[Y,s]\neq 1$, replacing $Y$ by a suitable simple subgroup of $Y$, we are done.

We now assume $[s, U_\beta]=1$ for  all $\beta\in R_\al$. In this situation, as $s$ is non-central,
$[s,U_\gamma]\neq 1$
for some simple root adjacent to $\al$ in the Dynkin diagram. Moreover,
the Dynkin diagram of the above groups contains a node $\beta$,  not adjacent to each of $\al,\gamma$.
In particular, $\beta\in R_\alpha$ and so  $[s,U_{\beta}]=1$, while  $[s,U_\gamma]\neq 1$. So now we can take
$K=\lan U_{\pm\beta}\ran$ and $Y=\lan U_{\pm\gamma}\ran$.

This completes the proof.\end{proof}

We now apply the previous three lemmas and Theorem~\ref{sz5} to establish Theorem~\ref{c99} for certain groups.

 \begin{propo}\label{vg1} Let $G$ be of type $A_n$ for $n>3$, $C_n$ for $n>2$, or of type $F_4, E_6, E_7$, or $E_8$.
 Let $V$ be a non-trivial \ir   $G$-module and
  $s\in T\setminus Z(G)$. Suppose that the spectrum of s  has on V is almost simple.
  Then one of the following holds:
\begin{enumerate}[]
\item{\rm{(1)}}  $s$ is  regular,
\item{\rm {(2)}} $G=C_n$ with $p=2$ and the highest weight of V is $2^m\om_n$, or
\item{\rm {(3)}} $G$ is classical and $V$ is a Frobenius twist of the natural or the dual
of the natural module for $G$.
\end{enumerate}
\end{propo}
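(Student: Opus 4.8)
The plan is to leverage the three commuting-subgroup lemmas (\ref{ab1}, \ref{a12}, \ref{ab2}) together with the representation-theoretic dichotomy of Theorem~\ref{sz5}. Assume $s$ is non-regular; I want to force one of the remaining conclusions. For each group type in the hypothesis, the relevant lemma produces simple subsystem subgroups $K,Y$, normalized by $T$, with $[K,Y]=1$, $[K,s]=1$ and $[s,Y]\neq 1$, \emph{except} for the listed small-rank exceptions in $A_3$ (which is excluded here since $n>3$) and in $C_2$ (excluded since $n>2$). So in the generic situation I have such a pair $K,Y$ in hand, and I can apply Theorem~\ref{sz5} with $G_1=K$, $G_2=Y$, $R_1,R_2$ the corresponding subsystems.

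The heart of the argument is to rule out alternative~(1) of Theorem~\ref{sz5}. Suppose $\phi|_{KY}$ has a composition factor $W$ that is non-trivial for both $K$ and $Y$. Since $[K,Y]=1$ and both are simple, $W$ is (an outer tensor product) a genuine tensor product $W_1\boxtimes W_2$ with each $W_i$ non-trivial. Now $s$ centralizes $K$, so $s$ acts on $W$ commuting with the $K$-action; since $W_1$ is a non-trivial $K$-module its $T\cap K$-weights are non-constant, and as $s\in T$ with $[s,Y]\neq 1$ the $Y$-factor $W_2$ carries at least two distinct $s$-eigenvalues as well. The key point is then Lemma~\ref{td2}(1): a tensor-decomposable module on which a non-central $s$ has almost simple spectrum forces all weights of each tensor factor to have multiplicity $1$ \emph{and} forces $s$ to be regular. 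Applying this to the sub-$KY$-module $W$ (on which $s$ restricts) should yield that $s$ is regular on $W$, hence cannot have a non-simple eigenvalue pattern; more carefully, I would argue that the presence of a true tensor factor on which $s$ acts non-centrally manufactures, via distinct products of eigenvalues as in the proof of Lemma~\ref{ma2}, at least two repeated eigenvalues in the spectrum of $s$ on $V$, contradicting almost simplicity. This eliminates case~(1) of Theorem~\ref{sz5}, leaving cases~(2) and~(3) of that theorem, which give exactly conclusions~(3) and~(2) of the proposition: $\phi$ is a Frobenius twist of the natural module or its dual (for classical $G$), or the $\omega_n$-twist case $G=C_n$, $p=2$ with highest weight $2^m\omega_n$.

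It remains to dispose of the genuine exceptions where the commuting-subgroup lemmas fail to provide a suitable pair, namely $G=A_3$ with the two conjugacy classes $s=\diag(a,a,a^{-1},a^{-1})$ and $s=\diag(a,a,-a^{-1},-a^{-1})$, and $G=C_2$ with $s=\diag(a,a^{-1},a,a^{-1})$ or $s=\pm\diag(1,1,-1,-1)$. But these lie outside the stated hypotheses $n>3$ and $n>2$ respectively, so they do not arise for the groups listed in Proposition~\ref{vg1} and require no separate treatment here (they surface later, in the $A_3$ and $C_2$ entries of Theorem~\ref{c99}). For $F_4,E_6,E_7,E_8$, Lemma~\ref{ab2} always supplies the pair with no exceptions, so the uniform argument applies directly.

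The main obstacle I anticipate is the careful bookkeeping in the tensor-product step: ensuring that the composition factor $W$ of $\phi|_{KY}$ genuinely behaves as an honest tensor product of $K$- and $Y$-modules with $s$ acting diagonally, and then extracting the required repeated eigenvalues of $s$ on all of $V$ (not merely on $W$) to contradict almost simplicity. One must check that the $s$-eigenvalues on the $K$-trivial factor $W_2$ are non-constant precisely because $[s,Y]\neq 1$, and combine this with the multiplicity-$\geq 2$ phenomenon from Lemma~\ref{ma2}(1); the subtlety is that $W$ is only a composition factor, so I would need to either pass to $s$-eigenspaces on the full module or use that $s$ is semisimple so that its action is completely reducible and eigenvalue multiplicities on $V$ dominate those on any constituent.
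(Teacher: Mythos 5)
Your overall skeleton coincides with the paper's: Lemmas~\ref{ab1}, \ref{a12} and \ref{ab2} supply the commuting pair $K,Y$ (and you correctly note that the exceptional elements in $A_3$ and $C_2$ are excluded by the rank hypotheses), Theorem~\ref{sz5} gives the trichotomy, and your matching of its cases (2), (3) with conclusions (3), (2) of the proposition is right. The genuine gap is precisely the point you flag at the end and never resolve: giving meaning to ``the spectrum of $s$ on the composition factor $W$''. The element $s$ centralizes $K$ but only \emph{normalizes} $Y$, hence only normalizes $KY$; it therefore need not stabilize the pair of $KY$-submodules defining the composition factor, so $s$ does not act on $W$ at all. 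Both of your proposed repairs fail: the $s$-eigenspaces of $V$ are not $KY$-submodules (precisely because $[s,Y]\neq 1$), so $W$ cannot be located inside one of them; and ``multiplicities on $V$ dominate those on any constituent'' presupposes an action of $s$ on the constituent, which is exactly what is in question. Nor can you realize $W$ as a direct summand, since in positive characteristic $V|_{KY}$ need not be semisimple. The paper's missing device is a factorization of $s$: since $Y$ is simple and normalized by $T$, one has $TY=Y\cdot Z(TY)$, so $s=s_1s_Y$ with $s_1\in Z(TY)$ and $s_Y\in T\cap Y$; moreover $[s_1,K]=1$ because both $s$ and $s_Y$ commute with $K$. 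Then $s_1$ centralizes all of $KY$, so its eigenspaces \emph{are} $KY$-submodules, $W$ occurs as a composition factor of one of them (on which $s$ acts as the scalar $s_1$-eigenvalue times $s_Y$), and $s_Y\in KY$ genuinely acts on $W$, with eigenvalue multiplicities dominated by those of $s$ on $V$ because $s_Y$ is semisimple.

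A secondary inaccuracy: once the action is set up correctly, Lemma~\ref{ma2} and Lemma~\ref{td2}(1) are not the right tools, because on $W=W_1\otimes W_2$ the element $s_Y$ acts as ${\rm Id}\otimes\tau_Y(s_Y)$ --- the $K$-side factor is \emph{scalar}, whereas Lemma~\ref{ma2} requires both Kronecker factors to be non-scalar. The correct finish is more elementary: $\tau_Y(s_Y)$ is non-scalar (if it were scalar, $[s_Y,Y]$ would lie in the finite kernel of $\tau_Y$, hence be trivial by connectedness of $Y$, contradicting $[s_Y,Y]\neq 1$), so it has at least two distinct eigenvalues, and each eigenvalue of $s_Y$ on $W$ has multiplicity at least $\dim W_1\geq 2$; hence the spectrum of $s_Y$ on $W$, and therefore of $s$ on $V$, is not almost simple. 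So your plan reaches the right destination, but without the factorization $s=s_1s_Y$ the central step cannot be carried out.
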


\begin{proof} Suppose that $s$ is not regular. By Lemma~\ref{ab1} for $A_n$, Lemma~\ref{a12} for $C_n$,   and
Lemma~\ref{ab2} for the other groups in the statement, there are simple subsystem subgroups $K$, $Y$,
normalized by $T$, such that $[K,Y]=1$, $[K,s]=1$ and $[Y,s]\neq 1$.
 Then we apply Theorem~\ref{sz5} to $K$, $Y$ in place of $G(R_1),G(R_2)$ to conclude that either
(2) or (3) holds or there is a
$KY$-composition factor $M$ of $V$ afforded by an irreducible representation $\tau$
of $KY$, such that $\tau$ is non-trivial on both $K$ and $Y$. So we assume neither (2) nor (3) holds,
so  we are in the latter situation, and aim for a contradiction.

We first note that $TY=Y\cdot Z(TY)$, as $Y$ is simple. Therefore, as $s\in T$,
$s=s_1s_Y$ for some $s_1\in  Z(TY)\subset T$ and $s_Y\in (T\cap Y)$.
As $[s,K]=1$ and $[Y,K]=1$, we have $[s_1,K]=1$ and  $[s_1,YK]=1$. Also, as $[s,Y]\ne 1$, we have
$[s_Y,Y]\ne 1$.

Now $M$ is a direct sum of eigenspaces for $s_1$. \itf $\tau$ is realized in one of the $s_1$-eigenspaces
$M_1$, say, and hence the spectrum of $s$ on $M_1$ is almost simple   \ii that of $s_Y$ on $M_1$ is almost
simple. Therefore, it suffices to show that the spectrum of $\tau(s_Y)$ is not almost simple.

Now $\tau=\tau_K\otimes \tau_Y$, where $\tau_K$, $ \tau_Y$
are non-trivial \ir  \reps of $K$, $Y$, respectively. As $[s_Y,Y]\neq 1$,
there are at least two distinct $s_Y$-eigenspaces  on the representation
space corresponding to $\tau_Y$, each of  them  is of dimension at least $2$ as $\tau_K(K)$ acts
on each eigenspace and all $\tau_K(K)$ composition factors of $M$ are of dimension strictly greater
than $1$.
Hence, the spectrum of $s_Y$ on $M$  is not almost simple, giving the desired contradiction. \end{proof}

\begin{remk}\label{rm5} $(1)$ Let $G=C_2$, $p$ odd. If s is not as described in the exceptional cases of
  Lemma~\ref{a12} then Proposition~\ref{vg1} remains valid.

  $(2)$ Note that the \irr of $G=C_2$ with highest weight $\om_2$ induces an isomorphism between
  ${\rm PSp}_4(F)$  and ${\rm SO}_5(F)$,  and the  element  $s=\pm\diag(1,1,-1,-1)$ in Lemma~\ref{a12}
  acts as $\diag(1,-1,-1,-1,-1)$, hence has almost simple  spectrum.  Similarly, the
  element $s=\diag(a,a\up,a,a\up)$ acts as $\diag(a^2,1,1,1,a^{-2})$, which has almost simple spectrum provided
  $a^2\neq \pm1$.

(3)
In view of Lemma~\ref{td2} and Proposition~\ref{vg1}, to complete the proof of  Theorem~\ref{c99},
it remains to consider $p$-restricted representations (of highest weight $\lambda$) of the groups $B_n$ for
$n>2$, $D_n$  for $n>3$, $C_n$ for $p=2$ and $\lam=\om_n$, and the small rank groups $A_2$, $A_3$, $C_2$, and
$G_2$. We will  handle the small rank groups in Section~\ref{sec:smrk} and complete the proof in
Section~\ref{BCD} by dealing with the remaining groups. \end{remk}


\section{Weight levels}\label{sec:levels}

Set $\Lambda = \sum_{i=1}^n{\mathbb Z}\omega_i$, the weight lattice associated with $\Phi$, and $\Lambda^+$ the set of
dominant weights in $\Lambda$. In this section we
establish some results on $\Lambda$ in view of applying the results in Section 2.  Recall that a weight
is  radical if it is an integral linear combination of roots.
 The \ir $G$-module whose highest weight is the maximal height short root is called the short root module. If all weights are of the same length then any root is regarded as  short, and the short root module  is $V_{\om_a}$.   

\begin{defn} Let $$\Lambda_{1}=\{\mu\in\Lambda^+\   | \ \mbox{ if } \nu\preceq\mu\mbox{ for some }
 \nu\in\Lambda^+\mbox{ then }  \mu=\nu\ \}.$$
For $i>1$, let $$\Lambda_{i} = \{\mu\in\Lambda^+\ , \mu\notin  \Lambda_1\cup\cdots\cup\Lambda_{i-1}\}\, |\ \mbox{ if } \nu\prec\mu\mbox{ for some }
 \nu\in\Lambda^+\mbox{ then } \nu\in \Lambda_1\cup\cdots\cup
 \Lambda_{i-1}\}.$$
The elements of $\Lambda_{i}$ are called \emph{weights of level $i$}.
\end{defn}

\begin{lemma}\label{abcd} The sets $\Lambda_1$ and $\Lambda_2$ for the root systems of types $A_n$, $B_n$, $C_n$ and $D_n$ are given in the table below. In addition, we have\begin{enumerate}[]
\item{\rm {(1)}} for $\Phi=B_n$, $n>2$, $\om_2$ is the only radical weight in $\Lambda_3$;
\item{\rm {(2)}} for $\Phi=C_n$, $n>3$, $2\om_1,\om_4$ are the only radical weights in $\Lambda_3;$
\item{\rm {(3)}} for $\Phi=C_2$ or $C_3$, $2\om_1$ is the only radical weight in $\Lambda_3$.
\end{enumerate}

\begin{table}[h]
$$\begin{array}{|l|c|c|}
\hline
\Phi& \Lambda_1&\Lambda_2 \\
\hline
A_n,n\geq 1&0,\omega_1,\dots,\omega_n& 2\omega_1,2\omega_n,\omega_1+\omega_n,\omega_1+\omega_i,\omega_i+\omega_n, i=2,\dots,n-1\\
\hline
B_n, n\geq3& 0,\omega_n&  \omega_1,\omega_1+\omega_n \\
\hline

    C_n, n>2&0,\omega_1&\omega_2,\omega_3 \\
    \hline
    C_2&0,\omega_1&\omega_2,\omega_1+\omega_2\\
\hline
D_n, n>4&0,\omega_1,\omega_{n-1},\omega_n& \omega_2,\omega_3,\omega_1+\omega_{n-1},\omega_1+\omega_n\\
\hline
D_4&0,\omega_1,\omega_3,\omega_4&\omega_2,\omega_1+\omega_3,\omega_1+\omega_4,\omega_3+\omega_4\\
\hline
\end{array}$$
\end{table}
 \end{lemma}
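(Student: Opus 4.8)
The plan is to compute the two bottom levels of the dominant weight poset directly for each of the four classical root systems, and then handle the three extra claims about radical weights in level~3 as a short continuation of the same analysis. The key combinatorial tool is the description of $\Lambda^+$ as $\{\sum a_i\om_i : a_i\ge 0\}$ together with the partial order $\prec$, where $\mu'\succ\mu$ means $\mu'-\mu$ is a nonzero sum of positive roots. Thus $\Lambda_1$ consists of those dominant $\mu$ admitting no dominant weight strictly below them, and $\Lambda_2$ of those dominant $\mu$ all of whose dominant strict predecessors lie in $\Lambda_1$. Since $\mu'-\mu\in R^+$ forces $\mu\preceq\mu'$, the computation reduces to understanding, for a given dominant $\mu$, which simple-root-combinations $\mu-\sum c_i\al_i$ are again dominant.

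First I would record, for each type, the expressions of the simple roots $\al_i$ in the fundamental-weight basis, i.e.\ the Cartan matrix, so that subtracting a positive root from a dominant weight and testing dominance becomes an explicit nonnegativity check on the coordinates. The minuscule and quasi-minuscule fundamental weights ($\om_1$ for $A_n,B_n,C_n,D_n$; the spin weights $\om_n$ for $B_n$ and $\om_{n-1},\om_n$ for $D_n$; and the zero weight) are precisely the ones from which no positive root can be subtracted while staying dominant, giving $\Lambda_1$; I would verify in each case that subtracting any single simple root from such a weight produces a negative coordinate, and conversely that every other fundamental or decomposable dominant weight has a dominant predecessor. For $\Lambda_2$ I would, for each candidate $\mu$ listed in the table, exhibit explicitly a dominant predecessor in $\Lambda_1$ (for instance $2\om_1\succ 0$ in $A_n$, $C_n$, via $2\om_1-\al_1$ being dominant and eventually reaching $0$ or $\om_1$-type weights), and show that no predecessor of intermediate level exists, i.e.\ that every dominant weight strictly below $\mu$ already lies in $\Lambda_1$. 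The small-rank exceptions $C_2$ and $D_4$ require separate tables because coincidences among the $\om_i$ and the short $\al$-strings change the poset; I would simply run the same coordinate check for those fixed small Cartan matrices.

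For the three supplementary statements, I would push the same bookkeeping one level further but only for radical weights, using that a weight is radical precisely when it lies in the root lattice $R$, equivalently when its image in $\Lambda/R$ (the fundamental group, computed from the determinant data of the Cartan matrix) is trivial. In type $B_n$ the radical weights are generated appropriately so that $\om_2$ is the unique radical candidate at level~3; I would check $\om_2$ is radical, that $\om_1\prec\om_2$ with $\om_1\in\Lambda_2$ forcing $\om_2\notin\Lambda_1\cup\Lambda_2$, and that its only dominant predecessors are $\om_1$ and $0$, placing it in $\Lambda_3$, while ruling out any other radical weight of level~3 by the level-$2$ table. The analogous verifications give $2\om_1,\om_4$ for $C_n$ with $n>3$ and $2\om_1$ for $C_2,C_3$; here one notes $\om_4$ is radical only in type $C$ and that $2\om_1$ sits above $\om_2\in\Lambda_2$.

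The main obstacle is not conceptual but organizational: the argument is a finite but type-by-type and sometimes rank-by-rank case check, and the danger lies in overlooking a dominant predecessor of intermediate level (which would wrongly promote a weight out of $\Lambda_2$ or out of level~$3$) or in mishandling the short/long-root distinctions and the low-rank coincidences in $C_2,C_3,D_4$. To keep this honest I would frame a single uniform lemma—``$\mu\in\Lambda_i$ iff every dominant weight strictly below $\mu$ has level $<i$ and at least one has level exactly $i-1$''—and then verify the hypotheses mechanically from the Cartan-matrix coordinate conditions, treating the exceptional small ranks as isolated explicit computations. The radical-weight claims are the most delicate since they demand knowing the full set of dominant predecessors, not merely the existence of one, so for those I would enumerate all dominant $\nu\prec\mu$ explicitly rather than appeal to a one-sided inequality.
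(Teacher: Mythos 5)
Your plan founders on its very first step: the identification of $\Lambda_1$. You assert that $\Lambda_1$ consists of the minuscule \emph{and quasi-minuscule} fundamental weights together with $0$, and you explicitly include $\om_1$ for $B_n$. But a quasi-minuscule weight is the highest short root; it is radical, hence lies strictly above $0$ in the order $\prec$, and therefore cannot belong to $\Lambda_1$. Indeed the table you are asked to prove places $\om_1$ for $B_n$ in $\Lambda_2$, not $\Lambda_1$ (and your own later discussion of $\Lambda_3$ for $B_n$ uses $\om_1\in\Lambda_2$, contradicting your first paragraph). The source of the error is that your proposed mechanical test is not a valid criterion for membership in $\Lambda_1$: checking that $\mu-\al_i$ fails to be dominant for every \emph{single} simple root $\al_i$ does not show that no dominant weight lies strictly below $\mu$, because $\mu'\prec\mu$ only requires $\mu-\mu'$ to be a nonzero nonnegative integral sum of simple roots, and the intermediate steps of such a chain may leave the dominant cone. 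Concretely, in $B_n$ one has $\om_1=\ep_1=\al_1+\cdots+\al_n$; every single subtraction $\om_1-\al_i$ is non-dominant, yet $0\prec\om_1$. The same phenomenon occurs for $\om_2$ in $C_2$. Since your computations of $\Lambda_2$ and of the radical weights in $\Lambda_3$ are built on top of this level-$1$ criterion, they inherit the error (most visibly for $B_n$, where both columns of your table would come out wrong).

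The paper avoids this trap by using structural facts rather than coordinate checks at level $1$: by Lemma~\ref{wtlattice}(3), a nonzero radical dominant weight has a root among its subdominant weights (so in particular $0\prec\lam$), while a non-radical dominant weight has a minuscule weight below it; hence $\Lambda_1$ is exactly $\{0\}\cup\{\mbox{minuscule weights}\}$. It then quotes the fact that the unique radical weight in $\Lambda_2$ is the maximal short root, and only after these two structural inputs does it run the kind of coefficient bookkeeping you describe (e.g.\ $a_i\geq 2$ forces $\om-\al_i$ to be dominant, reducing to weights with all $a_i\leq 1$, followed by a type-by-type enumeration). If you repair your level-$1$ criterion --- either by invoking these structural facts, or by testing subtraction of arbitrary elements of $R^+$ rather than single simple roots --- the remainder of your outline (the reduction to small coefficients, the case-by-case analysis, and the separate treatment of $C_2$, $C_3$ and $D_4$) is essentially the paper's argument.
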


 \begin{proof} By Lemma~\ref{wtlattice}(3), $\Lambda_1$ consists of minuscule weights and the weight $0$, justifying the entries
   in the column headed $\Lambda_1$ of the above table. Furthermore, $\Lambda_2$ contains a unique radical weight,
   which is the maximal short root (see for instance \cite[Proposition 10]{SZ06}).

Let now $\om=\sum a_i\om_i\in\Lambda_2$ be a non-radical weight.  Suppose that  $a_i\geq 2$ for some $i$.
Then $\om'=\om-\al_i\in\Lambda^+$, so $\omega'\in\Lambda_1$. Inspecting $\Lambda_1$ and the expressions of simple roots in terms of
fundamental dominant weights, we observe that $\om'+\al_i$ (for $\om'\in\Lambda _1$) is dominant
only if $\Phi$ is of type $A_n$ and
$\om\in\{2\om_1,2\om_n\}$; furthermore, it is straightforward to see that in this latter case, we have $2\omega_1, 2\omega_n\in\Lambda_2$.
So we can assume that $a_i\leq 1$ for all $i$. Next we proceed case-by-case, still assuming $\omega\in\Lambda_2$ a non-radical weight.

Consider first the case where  $\Phi=A_n$. If $n=1,2$ then the result is clear, so assume now $n>2$.
Note that $\om_i+\om_j\succ\om_{i-1}+\om_{j+1}$ for $1\leq i<j\leq n$ as  $\om_i+\om_j-\om_{i-1}-\om_{j+1}=
\al_i+\cdots +\al_j$. (Here $\om_0$ and $\om_{n+1}$ are understood to be zero.) So if $a_i,a_j\neq 0$ for some $i\ne j$, then
$\om=\om'+\om_{i-1}+\om_{j+1}$ with $\om'\in\Lambda_1$.  Using the same reasoning for different pairs of non-zero coefficients,
we see that either $i=1$ and $\om'=\om_1$ or $j=n$ and $\om'=\om_n$. Finally,
one observes that no weight obtained is subdominant to another one. So $\Lambda_2$ is as in the table.
This completes the consideration of $\Phi=A_n$.

For  $\Phi\ne A_n$, the argument differs, as some fundamental dominant weights are radical. Recall that $\omega=\sum a_i\omega_i\in\Lambda_2$
is a non-radical weight and we have seen that  $a_i\leq 1$ for all $i$.  If $\om_i$ is a radical weight and $a_i>0$, then $\om-\om_i$ is
subdominant to the weight $\om$, and hence $0\neq \om-\om_i\in \Lambda_1$.
So $\om=\nu+\om_i$, for some $\nu\in\Lambda_1$, $\nu\neq 0$.  Moreover, $\om_i=\mu$, where $\mu$ is the maximal height short root,
as otherwise $\nu+\mu$ is
 subdominant to $\om$ and $\om\notin\Lambda_2$.   So either $\om=\nu+\mu$, for some $\nu\in\Lambda_1$,
 or $a_i=0$ for all $i$ such that $\om_i$ is radical.
 For each root system, we determine when $\nu+\mu$ lies in $\Lambda_2$.

Consider the case $\Phi=B_n$, $n\geq 3$. Following the notation of the previous paragraph, we have
$\nu=\om_n$, $\mu=\om_1$. Moreover, $\omega_i$ is radical for every $i<n$. So $\omega\in\Lambda_2$  non-radical implies
that $\omega=\omega_1+\omega_n$. It is straightforward to verify that $\omega_1+\omega_n\in\Lambda_2$. We deduce that
$\Lambda_2=\{\om_1, \om_1+\om_n\}$. For the final claim of (1), 
let $\om\in\Lambda_3$  be a radical weight. If   $a_i\geq 2$ for some $i$, then $\om-\al_i$  is a radical dominant weight which
must lie in $\Lambda_2$. We deduce that $\omega-\alpha_i = \omega_1$ and we find that $n=2$ and $\omega=2\omega_2$. It is then
straightforward to verify that $2\omega_2\in\Lambda_3$.
So we may now assume $a_i\leq 1$ for all $i$. In particular, as $\omega$ is radical, $a_n=0$. In addition,
$\om_i=\om_{i-1}+\al_i+\cdots +\al_n$, see \cite[Planche II]{Bo},   i.e. $\om_{i-1}\prec\om_{i}$. So $\om\in\Lambda_3$ then implies that $\omega=\omega_2$.

Consider now the case $\Phi=C_n$, for $n\geq2$.  If $a_i\ne 0$ or some $i$ such that $\omega_i$ is radical (as above),
we find that $\nu=\om_1$, $\mu=\om_2$. In this case $\mu+\nu=\om_1+\om_2$. But
$\omega_1+\omega_2-\alpha_1-\alpha_2$ is subdominant to $\omega$ and lies in $\Lambda_1$ only if $n=2$. 
We may now assume $a_i=0$ if $\om_i$ is radical, so $a_i=0$   for $i$ even. Also by the preliminary remarks, $a_i\leq 1$ for all $i$.
It is easy to observe that  $\om_i\succ\om_{i-2}$ for $i>1$, which implies the result on $\Lambda_2$.
We now turn to the final claims of (2) and (3), so let $\om\in\Lambda_3$ be a radical weight.
 If  $a_i\geq 2$ for some $i$, then $\om-\al_i\in  \Lambda_2$ if only if
 $\omega=2\omega_1$.   So we now assume $a_i\leq 1$ for all $i$. Let $1\leq i\leq n$ be maximal such that $a_i=1$.
 Since the dominant weight $\om-\om_i+\om_{i-2}\prec\omega$ must lie in $\Lambda_1\cup \Lambda_2$ and is a radical weight, we
 find that $n\geq 4$ and $\omega=\omega_4$. Finally, one checks that $\omega_4$  lies in $\Lambda_3$.

 Finally consider the case $\Phi=D_n$, $n\geq 4$. Here, in the case where $a_i\ne 0$ for some $i$ with $\omega_i$ radical,
 we have (in the previously defined notation) $\nu\in \{\om_1,\om_{n-1},\om_n\}$ and $\mu=\om_2$, so
  $\mu+\nu\in \{\om_1+\om_2,\om_2+\om_{n-1},\om_2+\om_n\}$.  Now
  $\om_2+\om_n\succ\om_1+\om_{n-1}\notin\Lambda_1$ and $\om_2+\om_{n-1}\succ\om_1+\om_{n}\notin\Lambda_1$
  so $ \om_2+\om_n,\om_2+\om_{n-1}\not\in\Lambda_2$. Furthermore,
as $\om_1+\om_2-\al_1-\al_2=\om_3+\delta_{n,4}\omega_4$,
it follows that $\om_1+\om_2\not\in\Lambda_2$. So we now assume that $a_i=0$ for all $i$ such that $\omega_i$ is radical,
that is, $a_i=0$ if $i<n-1$ is even and as established earlier $a_j\leq 1$ for all $j$. Moreover, there are at most two $a_j$ which are non-zero, as otherwise there exists $\beta\in\Phi$ with $\omega-\beta$ dominant and not lying in $\Lambda_1$.  
Suppose $a_i=1$ for some (odd)
$i<n-1$.
Then $\omega-(\omega_i-\omega_{i-2})\prec\omega$ must lie in $\Lambda_1$ and so $i=3$.
 So finally, recalling that $\omega$ is non-radical we have
$\om\in\{\omega_3 (n>4),\omega_3+\omega_{n-1} (n>4), \omega_3+\omega_n (n>4),\om_1+\om_n, \om_1+\om_{n-1}, \om_{n-1}+\om_n\}$.
It is straightforward to see that $\omega_3 (n>4), \om_1+\om_{n-1}$ and $\om_1+\om_n$ all lie in $\Lambda_2$.
In addition, $\om_{n-1}+\om_n\succ \omega_{n-3}$, and the latter lies in $\Lambda_1$ if and only if $n=4$.  So
it remains to show that  $\omega_3+\omega_n,\omega_3+\omega_{n-1}\notin\Lambda_2$ for $n>4$. This is clear since
$\omega_2+\omega_{n-1}$, respectively $\omega_2+\omega_n$, is  subdominant to the given weight and does not lie in $\Lambda_1$.
\end{proof}

Now combining Lemma~\ref{abcd} with Lemma~\ref{wtlattice}, we obtain some information about the weight lattice of certain irreducible  $G$-modules.

\begin{lemma} \label{mr1} Assume $p=0$ or $p>e(G)$. Let $\om\neq 0$ be a  $p$-restricted dominant weight for $G$.  If $\om\not\in\Lambda_1\cup\cdots\cup\Lambda_i$ for
    some $i>0$, then there are   weights
    $\nu_1\ld \nu_i$ of $V_\om$ such that $\nu_j\in\Lambda_j$ for $j=1\ld i$. In addition, the weights of $V_{\nu_j}$ occur
    as weights of $V_{\om}$, for $1\leq j\leq i$.

\end{lemma}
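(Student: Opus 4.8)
The plan is to reduce the statement to the combinatorial structure of the levels $\Lambda_k$ together with the weight-set containment provided by Lemma~\ref{wtlattice}(1). First I would record the basic fact that the sets $\Lambda_k$ partition $\Lambda^+$: for any fixed $\om$ the set of dominant weights $\nu$ with $\nu\preceq\om$ is finite, so induction on this finite poset assigns to each dominant weight a unique finite level. In particular, the hypothesis $\om\notin\Lambda_1\cup\cdots\cup\Lambda_i$ says precisely that $\om\in\Lambda_m$ for some $m>i$.

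The key step is a \emph{descent claim}: if $\om\in\Lambda_m$ with $m\geq 2$, then there exists a dominant weight $\nu\prec\om$ with $\nu\in\Lambda_{m-1}$. To see this, note first that $\om\notin\Lambda_1$, so $\om$ is not minimal in $(\Lambda^+,\preceq)$ and hence admits at least one dominant predecessor; by the defining property of level $m$, every such predecessor lies in $\Lambda_1\cup\cdots\cup\Lambda_{m-1}$. If none of them lay in $\Lambda_{m-1}$, then all dominant predecessors of $\om$ would lie in $\Lambda_1\cup\cdots\cup\Lambda_{m-2}$; since $\om\notin\Lambda_1\cup\cdots\cup\Lambda_{m-2}$, the defining property of $\Lambda_{m-1}$ would then force $\om\in\Lambda_{m-1}$, contradicting $\om\in\Lambda_m$. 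Hence some dominant predecessor indeed lies in $\Lambda_{m-1}$.

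Iterating this descent, I would build a chain $\om=\xi_m\succ\xi_{m-1}\succ\cdots\succ\xi_1$ of dominant weights with $\xi_k\in\Lambda_k$ for each $k$: given $\xi_{k+1}\in\Lambda_{k+1}$ with $k+1\geq 2$, the claim produces $\xi_k\prec\xi_{k+1}$ with $\xi_k\in\Lambda_k$. By transitivity of $\prec$ we get $\xi_k\prec\om$ whenever $k<m$, and in particular for all $k\leq i$ since $i<m$. Setting $\nu_k=\xi_k$ for $1\leq k\leq i$ then yields weights $\nu_k\in\Lambda_k$ with $\nu_k\prec\om$.

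Finally I would invoke Lemma~\ref{wtlattice}(1): as $\om$ is $p$-restricted and $p=0$ or $p>e(G)$, the relation $\nu_k\prec\om$ gives $\Omega(V_{\nu_k})\subseteq\Omega(V_\om)$. This yields both assertions at once — the weights of each $V_{\nu_k}$ occur among the weights of $V_\om$, and in particular the highest weight $\nu_k$ of $V_{\nu_k}$ is itself a weight of $V_\om$. The only genuinely delicate point is the descent claim, namely that the levels cannot skip; everything else is a routine assembly. I do not expect any difficulty from the prime hypotheses, which enter solely through Lemma~\ref{wtlattice}(1).
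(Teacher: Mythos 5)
Your proposal is correct and follows the same route as the paper, whose entire proof is the one-line remark that the result ``follows from the definition of $\Lambda_j$ and Lemma~\ref{wtlattice}''; your partition property, descent claim, and chain $\om=\xi_m\succ\cdots\succ\xi_1$ with $\xi_k\in\Lambda_k$ are precisely the details that remark leaves implicit, and the final appeal to Lemma~\ref{wtlattice}(1) is exactly how the paper intends the prime hypotheses to enter. In short, you have supplied a careful write-up of the paper's own (unwritten) argument, with the descent claim correctly identified and proved as the only non-trivial point.
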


\begin{proof} This follows from the definition of $\Lambda_j$ and Lemma~\ref{wtlattice}.  \end{proof}


\section{Proof of Theorem~\ref{c99}}

In this section, we prove Theorem~\ref{c99}, so in particular we are concerned with the action of non-central non-regular semisimple elements on certain specific representations (as shown by Theorem~\ref{ag8}). As noted earlier, in remark~\ref{rm5}(3), we must handle some small rank groups as well as the groups $B_n$, $D_n$, and $C_n$ when $p=2$ and for certain highest weights; we do this in two separate subsections.

\subsection{Groups of small rank}\label{sec:smrk}

\begin{lemma} \label{sl3} Let $G=A_2$ and let $s\in T\setminus Z(G)$ be a non-regular element.
Let $V=V_\om$ be the \ir  $G$-module of $p$-restricted highest weight $\om\neq 0$.
Then the spectrum of $s$ on $V_\om$ is almost simple  if and only if $\om = \omega_1$ or $\omega_2$.
\end{lemma}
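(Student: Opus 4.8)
The plan is to analyze the situation concretely in $G = A_2 = \SL_3(F)$ with $T$ the diagonal torus, writing $s = \diag(a,b,c)$ with $abc = 1$. Since $s$ is non-regular and non-central, by Proposition~\ref{re3} some root vanishes on $s$, meaning two of the diagonal entries coincide; up to conjugacy I may take $s = \diag(a,a,a^{-2})$ with $a^3 \neq 1$ (so that $s$ is non-central, forcing $a^2 \neq a^{-2}$, i.e.\ $a^4\neq 1$, and also $a^{-2}\neq a$). In Bourbaki coordinates the weights $\ep_1,\ep_2,\ep_3$ take values $a,a,a^{-2}$ on $s$, so $s$ separates a weight $\sum c_i\ep_i$ from another $\sum c_i'\ep_i$ precisely according to the exponent of $a$ they produce. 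This reduces the whole question to tracking, for each dominant $\om = b_1\om_1 + b_2\om_2$, how many weights of $V_\om$ collapse to a common value of $a$.

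First I would dispose of the easy direction. For $\om = \om_1$ the module is the natural $3$-dimensional module with weights $\ep_1,\ep_2,\ep_3$, so $s$ acts as $\diag(a,a,a^{-2})$, which has almost simple spectrum (the eigenvalue $a$ has multiplicity $2$, the eigenvalue $a^{-2}\neq a$ has multiplicity $1$); the case $\om = \om_2$ is the dual and is identical. For the converse, the goal is to show every other $p$-restricted $\om$ fails. The natural engine for this is Lemma~\ref{no1}, which lets me propagate non-almost-simplicity upward along the subdominant ordering once I establish it at a low level; here $e(A_2)=1$, so the hypothesis $p=0$ or $p > e(G)$ is automatic, and Theorem~\ref{premet} and Lemma~\ref{wtlattice} apply freely. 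Consulting Lemma~\ref{abcd}, for $A_2$ we have $\Lambda_1 = \{0,\om_1,\om_2\}$ and $\Lambda_2 = \{2\om_1, 2\om_2, \om_1+\om_2\}$, so any $\om \notin \{\om_1,\om_2\}$ is subdominant-dominated by one of the level-$2$ weights, and it suffices to show that $s$ has non-almost-simple spectrum already on the adjoint module $V_{\om_1+\om_2} = V_{\om_a}$ (the $8$-dimensional Lie algebra module) and on $V_{2\om_1}, V_{2\om_2}$.

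The computational heart is therefore checking the three level-$2$ modules directly. On $V_{\om_1+\om_2}$, which is $\Lie(G)$ (of dimension $7$ when $p=3$, else $8$), the nonzero weights are the six roots $\pm(\ep_i-\ep_j)$; on $s$ the root $\ep_1-\ep_2$ gives value $1$, so $s$ centralizes a root subgroup, and I would exhibit two further roots taking equal nontrivial values together with the zero weight of multiplicity $\geq 1$ to force at least two repeated eigenvalues — concretely $\ep_1-\ep_3$ and $\ep_2-\ep_3$ both give $a^{3}$, while the zero weight contributes multiplicity $\geq 2$ (except in the excluded case $p=3$, which Lemma~\ref{no1}(2) routes through case (ii)). On $V_{2\om_1}$ (the symmetric square, dimension $6$) the weights are $2\ep_i$ and $\ep_i+\ep_j$, and $s$ acts as $\diag(a^2,a^2,a^{-4},a^{-1},a^{-1},a^{2})$-type data where the value $a^2$ recurs with multiplicity $\geq 3$, killing almost simplicity as soon as $a$ is non-central; $V_{2\om_2}$ is dual. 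The main obstacle I anticipate is the bookkeeping at the exceptional prime $p=3$, where $V_{\om_1+\om_2}$ loses its zero weight multiplicity and where $V_{2\om_1}$ or the Frobenius-untwisting conventions of Remark~\ref{nonrest} may interact with restrictedness; I would handle $p=3$ by appealing to Lemma~\ref{no1}(2)(ii) (the $(A_2,3)$ bullet is explicitly carved out there) and by a direct eigenvalue count on the low-dimensional modules rather than relying on the zero-weight argument. Once the three level-$2$ base cases are settled, Lemma~\ref{mr1} and Lemma~\ref{no1} propagate the conclusion to every higher $\om$, completing the converse.
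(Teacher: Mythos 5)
Your strategy coincides with the paper's: pin $s$ down as $\diag(a,a,a^{-2})$ with $a^3\neq 1$, settle the minuscule weights, reduce every other $p$-restricted $\om$ to the level-two weights $\Lambda_2=\{2\om_1,2\om_2,\om_1+\om_2\}$ via Lemmas~\ref{abcd} and~\ref{mr1}, compute there, and propagate upward. However, your computation at $2\om_1$ rests on a false criterion. You assert that on $V_{2\om_1}$ ``the value $a^2$ recurs with multiplicity $\geq 3$, killing almost simplicity.'' It does not: by definition a spectrum is almost simple when \emph{at most one} eigenvalue has multiplicity exceeding $1$, so a single eigenvalue of multiplicity $3$ with all others simple is still almost simple. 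What must be produced is \emph{two distinct} eigenvalues, each of multiplicity at least $2$. Your own list of values $a^2,a^2,a^{-4},a^2,a^{-1},a^{-1}$ contains the correct witness --- $2\ep_1,2\ep_2\mapsto a^2$ against $\ep_1+\ep_3,\ep_2+\ep_3\mapsto a^{-1}$, distinct precisely because $a^3\neq1$ --- and this is exactly the pair the paper uses; but the reason you give is wrong. (A slip of the same kind: non-centrality of $\diag(a,a,a^{-2})$ forces only $a^3\neq1$, not $a^4\neq1$; take $a$ a primitive fourth root of unity.)

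This matters because of how the propagation step has to run. Lemma~\ref{mr1} (through Lemma~\ref{wtlattice}(1)) gives containment of weight \emph{sets} only, not of multiplicities; so to transport non-almost-simplicity from $V_\nu$, $\nu\in\Lambda_2$, up to $V_\om$ you need a witness consisting of four distinct weights forming two pairs taking distinct values on $s$. A single thrice-repeated eigenvalue does not transfer, and neither does any argument leaning on the zero weight having multiplicity $2$ in the adjoint module, as your $\om_1+\om_2$, $p\neq3$ case does. For \emph{radical} $\om$ you could instead invoke Lemma~\ref{no1}(3), since $s$ is non-regular and $\om_a\prec\om$; but for non-radical $\om\notin\Lambda_1\cup\Lambda_2$ Lemma~\ref{no1} is unusable: clause (1) assumes the spectrum on $V_{\mu_m}$ is \emph{not} almost simple, whereas here $\mu_m\in\{\om_1,\om_2\}$ and the spectrum there \emph{is} almost simple, while clauses (2) and (3) require $\om_a\prec\om$, which fails for non-radical $\om$. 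For the same reason your treatment of the adjoint module at $p=3$ is circular: Lemma~\ref{no1}(2) and case (ii) of Lemma~\ref{no1}(1) both \emph{presuppose} non-almost-simplicity on the smaller module, so they cannot establish the base case $V_{\om_1+\om_2}$, $p=3$, itself. The direct count you promise does work, and uniformly in $p$: the four roots $\pm\al_1\mapsto 1$ and $\al_2,\al_1+\al_2\mapsto a^3\neq1$ already form two pairs with distinct values, and being roots they are weights of $V_\om$ whenever the weights of $V_{\om_a}$ are. Recording witnesses in this four-weight format in each of the three base cases is what makes Lemma~\ref{mr1} close the argument; as written, your proposal does not do so.
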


\begin{proof} We take $T$ to be the torus of diagonal matrices in ${\rm SL}_3(F)$. Since $s$ is non-regular non-central, with respect to an
  appropriate choice of base of $V_{\omega_1}$, we may assume $s= \diag(a,a,a^{-2})$, for some $a\in F$ with $a^3\ne 1$. Clearly the
  spectrum of  $s$ on $V_{\omega_1}$ and $V_{\omega_2}$ is indeed almost simple. So we now assume $\om\notin\{0,\om_1,\om_2\}$.
  In particular, Lemma~\ref{abcd} implies  $\om\notin\Lambda_1$ and by Lemmas~\ref{mr1} and ~\ref{abcd},  $\Omega(V_\om)$ has
  some weight from $\Lambda_2=\{\om_1+\om_2,2\om_1,2\om_2\}$.

Suppose first that $\om=2\om_1$, and so $p\ne 2$.  The weights of $V_{\om_1}$ are $\{\ep_1,\ep_2,\ep_3\}$, so the
weights of $V_{\om_1}\otimes V_{\om_1}$ are $2\ep_1,2\ep_2,2\ep_3, \ep_1+\ep_2,\ep_1+\ep_3,\ep_2+\ep_3$, which by
Lemma~\ref{wtlattice}(2) coincide with the weights of $V_\om$. Now, $2\ep_1(s)=2\ep_2(s)=a^2$, and $(\ep_1+\ep_3)(s)=(\ep_2+\ep_3)(s)=a\up$.
As $a^3\neq 1$, the \eis $a^2,a\up$ are distinct, so the spectrum of   $s$ on $V_{2\om_1}$ is not almost simple,
as claimed. Since $V_{2\omega_2} $ is dual to $ V_{2\omega_1}$, the spectrum of $s$  on $V_{2\omega_2}$ is not almost simple as well.

Suppose  now that $\om=\om_1+\om_2$. Then the weights of $V_\om$ are the roots and the zero weight.
Then $(\al_1+\al_2)(s)=\al_2(s)=(\ep_2-\ep_3)(s)=a^3\neq 1$ and $-(\al_1+\al_2)(s)=-\al_2(g)=a^{-3}$.
If $p\neq 3$, the eigenvalue 1 is also of multiplicity $2$,
and we are done. If $p=3$ and $a^3\neq a^{-3}$ then we are done as well. So suppose $p=3$ and $a^6=1$ and hence $a^3=-1$, that is $a=-1$. Note that  $\pm \al_2(s)=-1$ and $\pm\al_1(s)=1$, so the result also follows in this case.

We now appeal to Lemma ~\ref{mr1} to conclude. \end{proof}

\def\so{the spectrum of }

\begin{lemma}\label{ht2} Let $G=A_3$ and let $s\in T\setminus Z(G)$ be a  non-regular  element.
Let $V=V_\om$ be the \ir  $G$-module of $p$-restricted highest weight $\om\neq0$.
If \so $s$ on $V$ is almost simple, then
either $\om=\om_1$ or $\om_3$, or $\om= \om_2$ and there exists $a\in F$, $a^4\ne 1$ such that
with respect to a suitably chosen basis,  $s=\diag(a,a,\pm a\up,\pm a\up)$, $a\in F$.\end{lemma}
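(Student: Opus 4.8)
The plan is to proceed exactly as in Lemma~\ref{sl3}, by reducing to a short list of small highest weights via the weight-level machinery and then ruling out all but $\om_2$ (with the stated shape of $s$) by explicit weight computations. First I would fix $T$ to be the diagonal torus in $\SL_4(F)$ and note that, since $s$ is non-regular and non-central, after conjugation and reordering the natural basis we may write $s=\diag(a_1,a_2,a_3,a_4)$ with $a_1a_2a_3a_4=1$ and at least one coincidence among the $a_i$ or their appropriate relations (non-regularity means $\ep_i(s)=\ep_j(s)$ for some $i\ne j$ by Proposition~\ref{re3} applied to $\Phi(A_3)=\{\ep_i-\ep_j\}$). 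Thus $s$ has a repeated eigenvalue on $V_{\om_1}$, so its spectrum there is almost simple, and dually on $V_{\om_3}$; this disposes of $\om_1,\om_3$ and gives the exceptional conclusions already present in the statement.

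Next I would assume $\om\notin\{0,\om_1,\om_3\}$. By Lemma~\ref{abcd} (the $A_n$ row with $n=3$), $\Lambda_1=\{0,\om_1,\om_2,\om_3\}$ and $\Lambda_2=\{2\om_1,2\om_3,\om_1+\om_3,\om_1+\om_2,\om_2+\om_3\}$. Using Lemmas~\ref{mr1} and~\ref{wtlattice}, if $\om$ lies in some $\Lambda_i$ with $i\ge 3$ then $\Om(V_\om)$ contains weights from $\Lambda_2$, and by Lemma~\ref{wtlattice}(1) the weights of the corresponding $V_\nu$ ($\nu\in\Lambda_2$) occur in $V_\om$; so it suffices to show that $s$ fails to have almost simple spectrum on each $V_\nu$ with $\nu\in\Lambda_2$, together with handling $\om=\om_2$ itself. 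This reduces the whole lemma to a finite check over the weights $\om_2$, $2\om_1$ (and its dual $2\om_3$), $\om_1+\om_3$, and $\om_1+\om_2$ (and its dual $\om_2+\om_3$), where the prime restriction $p>e(G)=1$ (vacuous here) lets us apply Premet's theorem freely.

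The computational core is then to run through these weights. For $\om=2\om_1$ (so $p\ne 2$), by Lemma~\ref{wtlattice}(2) the weights are $2\ep_i$ and $\ep_i+\ep_j$; writing $a_i=\ep_i(s)$ and using a coincidence $a_i=a_j$ one produces two distinct eigenvalues each of multiplicity $\ge 2$, contradicting almost simplicity, and $2\om_3$ follows by duality. For $\om=\om_1+\om_3$ the weights are the roots $\ep_i-\ep_j$ together with the zero weight of multiplicity $3$ (i.e.\ the adjoint-type module $V_{\om_a}$), and a repeated eigenvalue of $s$ on $V_{\om_1}$ forces a nontrivial eigenvalue of multiplicity $\ge 2$ in addition to the already-repeated eigenvalue $1$; here I would invoke Lemma~\ref{no1}(3) or argue directly that the two coincidences $\pm(\ep_i-\ep_k)(s)$ give extra repeated eigenvalues. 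For $\om=\om_1+\om_2$ (dually $\om_2+\om_3$) the weights are $\Om(V_{\om_1})+\Om(V_{\om_2})$ by Lemma~\ref{wtlattice}(2), and again the coincidence among the $a_i$ produces two distinct repeated eigenvalues. Finally, for $\om=\om_2$ itself, $V_{\om_2}$ is the $6$-dimensional $\bigwedge^2$ of the natural module with weights $\ep_i+\ep_j$ ($i<j$); here almost simple spectrum is \emph{possible}, and a direct eigenvalue computation shows it holds precisely when, after reordering, $s=\diag(a,a,\pm a\up,\pm a\up)$ with $a^4\ne1$ — exactly the exceptional cases of Lemma~\ref{a12} and the Remark following Lemma~\ref{ab1}.

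I expect the main obstacle to be the $\om=\om_2$ analysis: unlike the other weights, this case does \emph{not} lead to a contradiction, so one must carry out the eigenvalue bookkeeping on $\bigwedge^2$ carefully enough to pin down exactly which non-regular $s$ yield almost simple spectrum and to verify that these are precisely the diagonal forms $\diag(a,a,\pm a\up,\pm a\up)$ stated. The weights are the six products $a_ia_j$, and the condition that at most one of these has multiplicity $>1$ forces strong constraints on the multiset $\{a_1,a_2,a_3,a_4\}$; translating ``non-regular, non-central, almost simple on $\bigwedge^2$'' into these two explicit normal forms (and ensuring $a^4\ne1$ to keep $s$ non-central and genuinely of this type) is the delicate step. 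The other weights are routine once the weight sets are written down.
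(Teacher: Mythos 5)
Your proposal is correct, but it follows a genuinely different route from the paper's. The paper does not redo the weight-level analysis for $A_3$: after treating $\om_1,\om_3$ and the explicit $\om_2$ computation (which you reproduce correctly, including the normal forms $\diag(a,a,\pm a\up,\pm a\up)$ with $a^4\ne 1$), it writes $s=\diag(a\gamma,a\gamma,a^{-2}\gamma^{-2},1)\cdot\diag(\gamma\up,\gamma\up,\gamma\up,c)$ with $\gamma^3=c$, places $s$ in the maximal parabolic $P=LQ$ corresponding to $\al_3$, observes that the second factor acts as a scalar on the fixed-point space $V_\om^Q$ (an irreducible module for $L'\cong\SL_3(F)$), and then applies the already-established $A_2$ result (Lemma~\ref{sl3}) to $h=\diag(\gamma a,\gamma a,(\gamma a)^{-2})$ acting on $V_\om^Q$ and on $(V_\om^*)^Q$. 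This forces $\om=m_1\om_1+m_2\om_2+m_3\om_3$ to satisfy $(m_1,m_2),(m_2,m_3)\in\{(0,0),(1,0),(0,1)\}$, so that beyond the fundamental weights only $\om=\om_1+\om_3$ survives, and that case is dispatched by a three-line computation using $\al_1(s)=1$. Your approach instead replays the $\Lambda_1/\Lambda_2$ strategy of Lemma~\ref{sl3} one rank higher, which obliges you to check all five weights of $\Lambda_2$ explicitly; the paper's parabolic factorization buys the elimination of $2\om_1$, $2\om_3$, $\om_1+\om_2$, $\om_2+\om_3$ essentially for free, while your route is more uniform and self-contained (it uses only Lemmas~\ref{abcd}, \ref{mr1}, \ref{wtlattice} and \ref{no1}, with no induction on rank).

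Two points deserve care in your write-up. First, the sentence ``it suffices to show that $s$ fails to have almost simple spectrum on each $V_\nu$ with $\nu\in\Lambda_2$'' is not literally sufficient: non-almost-simplicity does not pass from $V_\nu$ to $V_\om$ automatically, because weight multiplicities (for instance the zero weight of $V_{\om_1+\om_3}$) need not persist in $V_\om$. The transfer is legitimate exactly when the witness consists of four distinct weights taking two distinct values on $s$, or when Lemma~\ref{no1}(3) applies; your computations do supply such witnesses for $2\om_1$ and $\om_1+\om_2$ (and their duals), and you invoke Lemma~\ref{no1}(3) for the adjoint weight, so this is a defect of phrasing rather than a gap. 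Second, your claim that a repeated diagonal entry makes the spectrum of $s$ on $V_{\om_1}$ almost simple is false as stated (e.g. $\diag(a,a,a\up,a\up)$ has two eigenvalues of multiplicity $2$); it is also immaterial, since for $\om\in\{\om_1,\om_3\}$ the conclusion of the lemma holds with nothing to prove.
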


\begin{proof}  Without loss of generality, we take $T$ to be the set of diagonal matrices in ${\rm SL}_4(F)$. We may assume $s=\diag(a,a,b,c)$ for some $a,b,c\in F$ such that $a^2bc=1$. Fix the base of $\Phi$ such that
  $\alpha_i(\diag(a_1,a_2,a_3,a_4)) = a_ia_{i+1}^{-1}$ for $1\leq i\leq3$; in particular $\al_1(s)=1$.
It is clear that if $a^2b^2\ne 1$ then $s$ has almost simple spectrum on
$V_{\om_1}$ and on $V_{\om_3}$. 
If $\om=\om_2$, then the matrix of $s$ on
$V$ is conjugate to $s_1=\diag(a^2,a^{-2},ab,ab,(ab)^{-1},(ab)^{-1})$,   so \so $s$ is almost simple
only if $b=\pm a^{-1}$ and $a^4\ne 1$, and the result easily follows.

Now consider the general case, where $\omega\not\in\{\om_1,\om_2,\om_3\}$. Assume $s$ has almost simple spectrum on $V_\om$.
Factor $s$ as $$s = \diag(a\gamma,a\gamma,a^{-2}\gamma^{-2},1)\cdot\diag(\gamma^{-1},\gamma^{-1},\gamma^{-1},c),$$
where $\gamma,c\in F$ with $\gamma^3=c$.
Then viewing $s$ as lying in the maximal parabolic $P=LQ$, $Q=R_u(P)$,
corresponding to the root $\alpha_3$, we see that the second factor acts as a scalar on the
fixed point space $V_\om^Q$. Hence the eigenvalue multiplicities of $s$ on this fixed point
space are determined by those of the first factor. We now apply Lemma~\ref{sl3} to the element
$h=\diag(\gamma a,\gamma a,(\gamma a)^{-2})$ and the weight $\omega\downarrow L'$, which is the highest weight of the
irreducible $L'$-module $V_\om^Q$. In addition, we apply Lemma~\ref{sl3} to  $(V_\om^*)^Q$. 
By  Lemma~\ref{sl3}, the only $p$-restricted irreducible representations of  $\SL_3(F)$ on which $h$ has an almost simple
spectrum are the natural representation and its dual.  Writing $\om=m_1\om_1+m_2\om_2+m_3\om_3$, we deduce  that
$(m_1,m_2),(m_2,m_3)\in \{(0,0),(1,0), (0,1)\}$.
We are therefore reduced to considering the case
$\om=\om_1+\om_3$, (a quotient of) the adjoint representation. The \mult of the weight $0$ is at least $2$ and $\al_1(s)=1$.
Therefore, $(\al_1+\al_2)(s)=\al_2(s)$, so $\al_2(s)=1$ as well.
But then $(\al_2+\al_3)(s)=\al_3(s)\ne 1$, as $s$ is non-central; hence $s$ is not almost cyclic on $V_{\om_1+\om_3}$.\end{proof}

 \begin{lemma}\label{C22C} Let $G=C_2$, 
 $p=2$, and let $\om$  be a non-zero $2$-restricted dominant weight.
Let $s\in T$ be a  non-regular element. Suppose that \so $s$
on $V_\om$ is almost simple. Then $\om\in \{\om_1,\om_2\}$ and \so $s$ is almost simple on precisely one of the modules
$V_{\om_1}$ and $V_{\om_2}$. Assume moreover that $T$ is the torus of diagonal matrices in the group ${\rm Sp}_4(F)$, written
with respect to a fixed symplectic basis $(e_1,e_2,f_2,f_1)$ of the natural module $V_{\omega_1}$. Let $g\in T$ be non-regular.
If the spectrum of $g$ on $V_{\om_1}$ is almost simple then, up to conjugacy, $\ep_1(g)=a$,
$\ep_2(g)=1$ for $1\neq a\in F$;   if \so $g$  on $V_{\om_2}$ is almost simple then, up to conjugacy
$\ep_1(g)=\ep_2(g)=a$ for $1\neq a\in F$. \end{lemma}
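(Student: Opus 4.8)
The plan is to treat $G=C_2=\Sp_4(F)$ with $p=2$ directly, using the explicit weight structure of the two fundamental modules $V_{\om_1}$ and $V_{\om_2}$ together with the description of non-regular elements available from Lemma~\ref{a12} and Lemma~\ref{co1}. First I would observe that by Lemma~\ref{abcd}, the level-one weights for $C_2$ are $0,\om_1$ and the level-two weights are $\om_2,\om_1+\om_2$; since $p=2$ the only $2$-restricted non-zero dominant weights are $\om_1,\om_2,\om_1+\om_2$. So the candidate highest weights form a very short list, and the bulk of the argument is to eliminate $\om=\om_1+\om_2$ and to analyze $\om_1,\om_2$ carefully. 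For $\om=\om_1+\om_2$, I would use the fact that $\om_1\prec\om_1+\om_2$ (via Lemma~\ref{mr1}) so that the weights of $V_{\om_1}$ occur among those of $V_{\om_1+\om_2}$, and then exhibit extra weight collisions forcing the spectrum not to be almost simple for any non-regular $s$; here one must be slightly careful because $p=2$ affects weight multiplicities, so I would compute directly with the Bourbaki weights $\pm\ep_1,\pm\ep_2$ and $\pm\ep_1\pm\ep_2$.

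Next I would pin down the non-regular elements explicitly. Writing $s\in T$ as $\diag(a_1,a_2,a_2^{-1},a_1^{-1})$ with respect to the symplectic basis $(e_1,e_2,f_2,f_1)$, I set $a_i=\ep_i(s)$. By Proposition~\ref{re3}, $s$ non-regular means some root vanishes on $s$, i.e. one of $a_1=a_2$, $a_1=a_2^{-1}$, $a_1^2=1$, $a_2^2=1$ holds (recalling $\Phi(C_2)=\{\pm 2\ep_1,\pm2\ep_2,\pm\ep_1\pm\ep_2\}$); since $p=2$ the condition $a_i^2=1$ is $a_i=1$. The natural module $V_{\om_1}$ has weights $\pm\ep_1,\pm\ep_2$, so $s$ acts as $\diag(a_1,a_2,a_2^{-1},a_1^{-1})$; an almost simple spectrum forces at most one eigenvalue repeated, and combined with non-regularity this should force (up to conjugacy, i.e. up to the Weyl group action permuting and inverting the $a_i$) exactly $a_1=a$, $a_2=1$ with $a\ne 1$. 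For $V_{\om_2}$, whose weights are $\pm\ep_1\pm\ep_2$ and (in characteristic $2$) the zero weight — I must track the multiplicity of $0$ since $V_{\om_2}$ for $C_2$ in $p=2$ is the $4$-dimensional module (the Weyl module of highest weight $\om_2$ is $5$-dimensional but the irreducible is smaller) — I would compute $s$ acting as $\diag(a_1a_2,a_1a_2^{-1},a_1^{-1}a_2,a_1^{-1}a_2^{-1})$ and find that almost simple spectrum together with non-regularity forces $a_1=a_2=a$ with $a\ne 1$.

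The remaining claim is the dichotomy: that for non-regular $s$, the spectrum is almost simple on \emph{precisely one} of $V_{\om_1},V_{\om_2}$, never both. I would prove this by cross-checking the two conjugacy normal forms: the element with $(a_1,a_2)=(a,1)$ that is good for $V_{\om_1}$ acts on $V_{\om_2}$ as $\diag(a,a,a^{-1},a^{-1})$ (plus possibly the zero weight contribution), giving two eigenvalues each of multiplicity at least $2$ when $a\ne a^{-1}$, hence not almost simple; symmetrically, the element with $(a_1,a_2)=(a,a)$ good for $V_{\om_2}$ acts on $V_{\om_1}$ as $\diag(a,a,a^{-1},a^{-1})$, again not almost simple for $a^2\ne 1$. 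The boundary values $a^2=1$, i.e. $a=1$ in $p=2$, are excluded by non-centrality, so the two cases are genuinely disjoint.

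The main obstacle I anticipate is the characteristic-$2$ bookkeeping for $V_{\om_2}$: in $p=2$ the irreducible module of highest weight $\om_2$ for $C_2$ is not the full $5$-dimensional orthogonal module, and the zero weight multiplicity must be handled correctly, since the statement hinges on exactly which eigenvalues repeat. I would resolve this by working with the explicit small-dimensional realization of the irreducible $V_{\om_2}$ in characteristic $2$ (equivalently, invoking Remark~\ref{nonrest} and the $B_n\cong C_n$ identification discussed in the Notation section) rather than with the Weyl module, and reading off the $s$-eigenvalues from the explicit weight list. Once the exact weight-with-multiplicity data for both modules is fixed, all the conclusions reduce to elementary comparisons of the multiset of values $\{a_1^{\pm1}a_2^{\pm1}\}$ or $\{a_1^{\pm1},a_2^{\pm1}\}$ under the non-regularity constraints, which is routine.
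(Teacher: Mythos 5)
Your proposal reaches the right conclusions, but it takes a genuinely different route from the paper for the key step of eliminating $\om=\om_1+\om_2$. The paper's proof of this step is one line: it invokes Steinberg's characteristic-$2$ factorization \cite[\S12, Corollary of Theorem 41]{St}, $V_{\om_1+\om_2}\cong V_{\om_1}\otimes V_{\om_2}$, and then Lemma~\ref{td2}(1) forces $s$ to be regular, contradicting the hypothesis; the normal-form statements for $V_{\om_1}$ and $V_{\om_2}$ are then dismissed as easily verified. You instead propose an explicit weight/eigenvalue computation throughout. That buys something the paper omits: your case analysis (up to the Weyl group, a non-regular $s\neq 1$ has $(\ep_1(s),\ep_2(s))=(a,1)$ or $(a,a)$ with $a\neq 1$), your correct identification of $V_{\om_2}$ in characteristic $2$ as the $4$-dimensional irreducible with weights $\pm\ep_1\pm\ep_2$ and no zero weight, and your cross-check of the two normal forms together constitute a full proof of the ``precisely one'' dichotomy and of the conjugacy representatives, which is exactly the content the paper leaves to the reader.

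One citation in your argument must, however, be repaired. You justify ``the weights of $V_{\om_1}$ occur among those of $V_{\om_1+\om_2}$'' by Lemma~\ref{mr1}; but Lemma~\ref{mr1} (like Lemma~\ref{wtlattice}, both resting on Premet's theorem) requires $p=0$ or $p>e(G)$, and $e(C_2)=2$, so it is unavailable in precisely the case $p=2$ you are treating. Your fallback -- computing directly with Bourbaki weights -- is therefore not a refinement but the actual argument, and it needs the weight data of the characteristic-$2$ irreducible $V_{\om_1+\om_2}$, whose natural sources are exactly Steinberg's factorization above or L\"ubeck's tables. Alternatively, you can make the step self-contained with no such input: the $W(G)$-orbit of the highest weight, namely $\{\pm2\ep_1\pm\ep_2,\ \pm\ep_1\pm2\ep_2\}$, consists of weights of $V_{\om_1+\om_2}$ in every characteristic, and already on these eight extremal weights each of the two normal forms $(a,1)$ and $(a,a)$, $a\neq 1$, produces two distinct eigenvalues of multiplicity at least $2$ (namely $a$ and $a^{-1}$, which are distinct since $a^2\neq 1$ when $a\neq 1$ in characteristic $2$). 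With that substitution your proof is complete and correct.
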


\begin{proof} As $\om$ is $2$-restricted, if $\om\not\in\{\om_1,\om_2\}$ then $\om=\om_1+\om_2$, and \cite[\S12, Corollary of Theorem 41]{St} implies that
  $V_{\om}=V_{\om_1}\otimes V_{\om_2}$. By Lemma~\ref{td2}, \so $s$ is simple on $V_{\om_1}$, and hence $s$ is regular,
  contradicting our hypothesis. One easily verifies the validity of the additional assertions. \end{proof}

\begin{lemma}\label{cc2} Let
$G=C_2$,
$p\ne 2$, and fix an ordered symplectic basis  $(e_1,e_2,f_2,f_1)$ of the natural module of $G$ and let $T$ be the torus of diagonal matrices of $G$ in the natural representation. Let $s\in T\setminus Z(G)$ be a
non-regular element and let  $V_\om\in\Irr (G)$ be
non-trivial  $p$-restricted $G$-module. 
Then  $s$ has almost simple spectrum  on  $V_\om$ if and only if one of the following holds:
\begin{enumerate}[]
\item{\rm {(i)}} $\om=\om_1$, and up to conjugacy, $\ep_1(s)=1$, $\ep_2(s)=a$
   or $\ep_1(s)=-1$, $\ep_2(s)=a$, where  $a\in F$, $a^2\neq 1;$
\item{\rm {(ii)}} $\om=\om_2$, and up to conjugacy, $\ep_1(s)=1$, $\ep_2(s)=-1$
   or $\ep_1(s)= \ep_2(s)=a$, where  $a\in F$, $a^2\neq \pm1.$
\end{enumerate}
\end{lemma}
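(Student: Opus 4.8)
The plan is to combine direct computations on the two small modules $V_{\om_1}$ and $V_{\om_2}$ with the weight-lattice machinery of Section~\ref{sec:levels} to reduce all larger weights to these. Throughout, since $p\neq 2$ we have $p>e(C_2)=2$, so Premet's theorem (Theorem~\ref{premet}) and hence Lemma~\ref{wtlattice} and Lemma~\ref{no1} apply. Writing $a_i=\ep_i(s)$, the element $s$ acts on the natural module as $\diag(a_1,a_2,a_2\up,a_1\up)$, and by Proposition~\ref{re3} it is non-regular exactly when some root vanishes on it, i.e. when $a_i=\pm1$ for some $i$ or $a_1=a_2^{\pm1}$.

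First I would treat the base cases $\om\in\{\om_1,\om_2\}$ directly, which simultaneously yields the ``if'' direction. On $V_{\om_1}$ the eigenvalues are $a_1,a_2,a_2\up,a_1\up$, and a short case analysis of the non-regular non-central possibilities shows that almost simple spectrum forces exactly the configurations in (i). On $V_{\om_2}$, whose weights are $\pm\ep_1\pm\ep_2$ together with the zero weight of multiplicity $1$, the eigenvalues are $a_1a_2,(a_1a_2)\up,a_1a_2\up,(a_1a_2\up)\up$ and $1$; running through the same non-regular cases pins down precisely the configurations in (ii). This establishes the lemma for $\om\in\{\om_1,\om_2\}$.

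For the ``only if'' direction it then remains to exclude every other nonzero $p$-restricted $\om$, and here I would split according to whether $\om$ is radical. The delicate point, which I expect to be the conceptual heart of the argument, is that the elements in (i) and (ii) \emph{are} almost simple on $V_{\om_1}$, respectively $V_{\om_2}$, so one cannot simply invoke Lemma~\ref{no1}(1) with $\mu_m\in\{\om_1,\om_2\}$; these $s$ must be excluded on larger modules by other means. If $\om$ is not radical and $\om\neq\om_1$, I would write $\om=\om_1+\om'$ with $\om'$ a nonzero dominant weight; since $\om$ is $p$-restricted, Lemma~\ref{wtlattice}(2) gives $\Om(V_\om)=\Om(V_{\om_1})+\Om(V_{\om'})$, so by Lemma~\ref{td2}(2) almost simple spectrum on $V_\om$ would force $s$ to separate the weights of $V_{\om_1}$, i.e. to have simple spectrum on the natural module, hence to be regular by Proposition~\ref{re3} --- a contradiction.

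The radical weights are handled through the adjoint module $V_{\om_a}=V_{2\om_1}$, whose weights are the eight roots (each of multiplicity $1$) together with the zero weight of multiplicity $2$ (as $p\neq2$). The key computation --- and what I expect to be the main obstacle --- is to show that \emph{no} non-regular non-central $s$ has almost simple spectrum on $V_{2\om_1}$: since some root vanishes on $s$, the eigenvalue $1$ already occurs with multiplicity at least $4$ (twice from the zero weight and once from each of the vanishing pair $\pm\al$), so for almost simple spectrum all remaining root-eigenvalues would have to be distinct; but inspecting the two $W$-classes of vanishing root (a long root, giving $a_1=\pm1$, and a short root, giving $a_1=a_2$, using that the spectrum is $W$-invariant) shows in each case that a further eigenvalue is repeated. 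Granting this, any radical $\om$ other than $\om_2$ and $2\om_1$ satisfies $\om_a=2\om_1\prec\om$ --- the radical dominant weights not strictly above $2\om_1$ being only $0$ and $\om_2$ --- so Lemma~\ref{no1}(3) immediately rules it out, while the weight $2\om_1$ itself is excluded by the computation just described. Collecting the radical and non-radical cases completes the proof.
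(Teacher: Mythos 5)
Your proposal is correct, but its architecture is genuinely different from the paper's. After the direct computation for $\om=\om_1$, the paper invokes Remark~\ref{rm5}(1) (that is, Proposition~\ref{vg1}, whose engine is Theorem~\ref{sz5} applied through the commuting subsystem subgroups of Lemma~\ref{a12}) to reduce at once, for every $\om\neq\om_1$, to the two exceptional elements $s_1=\diag(a,a,a\up,a\up)$ and $s_2=\pm\diag(1,-1,-1,1)$; it then computes the spectra of just these two elements on $V_{\om_2}$, $V_{\om_1+\om_2}$ and $V_{2\om_1}$, and kills all remaining weights with the level machinery of Lemmas~\ref{abcd} and~\ref{mr1}, transporting a four-weight coincidence pattern from $V_{2\om_1}$ or $V_{\om_1+\om_2}$ into $V_\om$. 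You instead keep the full set of non-regular non-central elements and reduce on the weight side: for non-radical $\om\neq\om_1$ you write $\om=\om_1+\om'$ and combine Lemma~\ref{wtlattice}(2) with Lemma~\ref{td2}(2), so that almost simplicity would force $s$ to separate the weights of the natural module and hence, by Proposition~\ref{re3}, to be regular --- a clean step with no case analysis on $s$; for radical $\om$ you show directly that no non-regular non-central element is almost simple on the adjoint module $V_{2\om_1}$ and then apply Lemma~\ref{no1}(3), using the (correct) poset fact that the only radical dominant weights not strictly above $2\om_1=\om_a$ are $0$ and $\om_2$. I checked the computation you sketch for $V_{2\om_1}$: if a short root vanishes on $s$, so $a_1=a_2=a$ up to $W$, then $2\ep_1$, $2\ep_2$ and $\ep_1+\ep_2$ all take the value $a^2\neq 1$; if a long root vanishes, so $a_1=\pm 1$, then $\ep_1+\ep_2$ and $\ep_2-\ep_1$ both take the value $a_1a_2\neq 1$; in either case a second repeated eigenvalue sits beside the eigenvalue $1$ of multiplicity at least $4$, as you claim. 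The trade-off: your argument is self-contained within the weight-theoretic lemmas of Sections 2 and 5 and avoids any appeal to Theorem~\ref{sz5}/Proposition~\ref{vg1}, while the paper's reduction to $s_1,s_2$ minimizes explicit spectral computation and keeps $C_2$ inside the uniform framework used for the other groups; you also correctly identify why a naive use of Lemma~\ref{no1}(1) with $\mu_m\in\{\om_1,\om_2\}$ cannot work, namely that the elements in (i) and (ii) are almost simple there.
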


\begin{proof}  Let
  $\ep_1(s)=b$, $\ep_2(s)=a$, that is $s = {\rm diag}(b, a, a^{-1}, b^{-1})$.

  We first consider $\om=\om_1$, so $\Om(V_\om)=\{\pm\ep_1,\pm\ep_2\}$. Since $s$ is non-regular, we may assume that either
  $a=b$ or $b^2=1$. In the first case, $s$ does not have almost simple spectrum on $V_\om$, while in the second case $s$ has
  almost simple spectrum on $V_\om$ if and only if $a^2\ne 1$.

We now turn to the cases  $\om\neq \om_1$.  By Remark~\ref{rm5}(1),
we are left with the exceptional cases described in Lemma~\ref{a12}, 
$s_1=\diag(a,a,a\up,a\up)$ with $a^2\neq1$,  or $s_2=\pm\diag(1,-1,-1,1)$. Note that  $\al_1(s_1)=1$ and $\al_2(s_2)=1$.
By  Lemma~\ref{abcd},    $\Lambda_1 =\{0,\om_1\}$, and $\Lambda_2=\{\om_1+\om_2,\om_2\}$ and $2\om_1$ is the
only radical weight in $\Lambda_3$.   We consider these weights in turn, before turning to the general case.

The weights of $V_{\om_2}$
are $0,\pm\ep_1\pm\ep_2$. The remarks of the preceding paragraph imply that the cases in the statement are the only possible ones,
 and they yield the matrices of $s_1,s_2$ on  $V_{\om_2}$ (with respect to a suitable basis)
  $\diag(a^2,1,1,1,a^{-2})$ and $\diag(-1,-1,1,-1,-1)$, respectively.

Suppose $\om= \om_1+\om_2$. Then  $\Om(V_\om)=\Om(V_{\om_1}\otimes V_{\om_2})$, by Lemma~\ref{wtlattice}.
In terms of Bourbaki weights, the weights in $\Omega(V_\om)$ are $\pm\ep_1 +(\pm\ep_1\pm\ep_2)$, $\pm\ep_2+(\pm\ep_1\pm\ep_2)$,
 $\pm\ep_1$, and $\pm\ep_2$. Then $(\pm\ep_1 +(\pm\ep_1\pm\ep_2))(s_2)=-1$,
$(\pm\ep_2+(\pm\ep_1\pm\ep_2)(s_2)=1$, so \so $s_2$ on $V_\om$ is not almost simple. Furthermore,
$(\ep_1+(-\ep_1+\ep_2))(s_1)=a=(\ep_2+(\ep_1-\ep_2))(s_1)$ and
$(-\ep_1+(\ep_1-\ep_2))(s_1)=a\up=(-\ep_2+(-\ep_1+\ep_2))(s_1)$. So \so $s_1$ on $V_\om$ is not almost simple.

Finally, suppose $\om=2 \om_1$. Then by Lemma~\ref{wtlattice}, the weights of $V_\om$ are the same as those of $V_{\om_1}\otimes V_{\om_1}$.
These are  $\pm\ep_i\pm\ep_j$, for $i,j\in\{1,2\}$. But now it is easy to see that neither $s_1$ nor $s_2$ has almost simple spectrum on $V_\om$.

We now turn to the general case and suppose that $\om$ differs from the weights examined above. Then
$\om\notin  \Lambda_1\cup \Lambda_2$ and $\om\neq 2\om_1$. Recall that if $\mu\in\Lambda_i$ for some $i$ then $V_\mu$ has a weight
from $\Lambda_j$ for every $j=1\ld i-1$ (Lemma~\ref{mr1}). Then Lemma~\ref{abcd} implies that either $2\om_1$ or $\om_1+\om_2$
is a weight of $V_\om$ and by Lemma~\ref{wtlattice}, the weights of $V_{2\om_1}$ or $V_{\om_1+\om_2}$ are weights of $V_\om$.
The above considerations of $V_{\om_1+\om_2}$ and $V_{2\om_1}$ show then that, given $s=s_1$ or $s_2$,  there are $4$ distinct
weights
$\lam_1,\lam_2,\nu_1,\nu_2$ in $\Omega(V_\om)$ such that $\lam_1(s)=\lam_2(s)\neq \nu_1(s)=\nu_2(s)$. So $s$ is not almost cyclic on
 $V_\om$, which completes the proof of the result. \end{proof}

 \begin{lemma}\label{gg2} Theorem~$\ref{c99}$ is true for G of type $G_2$.
 \end{lemma}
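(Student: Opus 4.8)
The plan is to handle $G_2$ by the same level-by-level strategy used for the other small rank groups, reducing an arbitrary $p$-restricted highest weight to a small list of low-level weights via Lemma~\ref{mr1} and then checking those directly. First I would record the relevant structural data: $e(G_2)=3$, so Premet's theorem (Theorem~\ref{premet}) and hence Lemma~\ref{wtlattice} apply whenever $p=0$ or $p>3$; the fundamental weights are $\om_1$ (short root module, $\mu=\om_1$ the maximal short root) and $\om_2=\om_a$ (adjoint, the maximal long root); and every non-zero dominant weight is radical since $G_2$ has trivial fundamental group. The minuscule-weight alternative of Lemma~\ref{wtlattice}(3) is therefore vacuous, so every non-trivial $V_\om$ contains a root as a weight. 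I would compute $\Lambda_1=\{0\}$, and identify $\Lambda_2=\{\om_1\}$ (the short root module) and the relevant level-3 weights $\{\om_2, 2\om_1\}$, analogous to the $B_n$, $C_n$ computations in Lemma~\ref{abcd}.

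Next I would set up a non-regular non-central $s\in T$: by Proposition~\ref{re3}, $\al(s)=1$ for some root $\al$, which we may take to be simple, so either $\al_1(s)=1$ or $\al_2(s)=1$. Writing weights in terms of $\al_1,\al_2$, I would analyze the action of $s$ on the two building-block modules. On the short root module $V_{\om_1}$ the non-zero weights are the six short roots; on the adjoint module $V_{\om_2}$ they are the six long roots together with the zero weight of multiplicity $2$. The key sub-claim is that the spectrum of any such non-regular $s$ on $V_{\om_1}$, and on $V_{\om_2}$, fails to be almost simple; this should follow because $\al_1(s)=1$ or $\al_2(s)=1$ forces a coincidence among the root values, producing an eigenvalue of multiplicity $\geq 2$ distinct from any other such eigenvalue (and in the adjoint case the multiplicity-$2$ zero weight already supplies one repeated eigenvalue, so any further coincidence among the long roots, forced by non-regularity together with non-centrality, destroys almost simplicity). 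This mirrors the arguments in Lemma~\ref{cc2} and Lemma~\ref{no1}(2),(3).

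The reduction for general $\om$ then runs as follows. If $\om$ is not $p$-restricted, Lemma~\ref{gc3} and Remark~\ref{nonrest} reduce us to the $p$-restricted case (with the caveat that $G_2$, $p\in\{2,3\}$ appear as exceptional tensor cases in Lemma~\ref{gc3}, which I would treat separately using the explicit multiplicity-$2$ weights exhibited there). For $p$-restricted $\om\neq 0$ with $p=0$ or $p>3$, since every such $\om$ is radical and $\om_a=\om_2\prec\mu$ for all $\mu$ of level $\geq 3$, I would invoke Lemma~\ref{no1}(3): as $s$ is non-regular and its spectrum on $V_{\om_a}=V_{\om_2}$ is not almost simple (the sub-claim above), the spectrum on $V_\om$ is not almost simple either, provided $\om_a\prec\om$. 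The only remaining weights are those not strictly above $\om_a$, namely $\om\in\{\om_1,\om_2\}$ themselves; for these the sub-claim already shows failure. I would conclude that no non-regular non-central semisimple $s$ has almost simple spectrum on any non-trivial $V_\om$ for $G_2$, which is exactly the content of Theorem~\ref{c99} for this type (no case of the theorem lists $G_2$).

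The main obstacle I anticipate is the small-characteristic bookkeeping: for $p\in\{2,3\}$ Premet's theorem and Lemma~\ref{wtlattice} need not apply, weight multiplicities in $V_{\om_1}$ and $V_{\om_2}$ can differ from the generic picture, and the Frobenius-twist/tensor-product exceptions (i),(ii),(iii) of Lemma~\ref{gc3} for $G_2$ must be dispatched by hand. I would therefore isolate $p=2$ and $p=3$, verify directly on the (low-dimensional) modules $V_{\om_1}$, $V_{\om_2}$ and their relevant twists that the non-regularity condition $\al(s)=1$ still forces a non-almost-simple spectrum, and only then combine with the generic reduction to finish the proof.
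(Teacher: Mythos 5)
Your generic-case reduction ($p=0$ or $p>3$) is sound and genuinely different from the paper's: you verify the key sub-claim for $V_{\om_1}$ and $V_{\om_2}=V_{\om_a}$ via root-value coincidences forced by $\al(s)=1$, then invoke Lemma~\ref{no1}(3) (or (1)(iv)) together with the fact that $\om_2\preceq\om$ for every dominant $\om\notin\{0,\om_1\}$. The paper instead restricts $V_{\om_1}$ to the long-root $A_2$ subgroup, writes $s$ explicitly as $\diag(a,b,c,1,a^{-1},b^{-1},c^{-1})$ with $abc=1$, and rules out each coincidence pattern (partly by observing that $s$ would otherwise be regular in ${\rm SO}(V)$ and hence in $G$), after which all other highest weights are handled by weight containment. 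Two small slips in your setup: $2\om_1$ lies in $\Lambda_4$, not $\Lambda_3$, since $\om_2\prec 2\om_1$; and for tensor-decomposable $V$ the clean contradiction is Lemma~\ref{td2}(1) --- almost simple spectrum forces $s$ regular --- rather than Lemma~\ref{gc3}, whose conclusion about weight multiplicities does not by itself contradict non-regularity.

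The genuine gap is $p\in\{2,3\}$, which you flag as an obstacle but do not close. For these primes, Premet's theorem and hence Lemmas~\ref{wtlattice}, \ref{mr1} and \ref{no1} are all unavailable, so there is no ``generic reduction'' left to combine with; and checking ``$V_{\om_1}$, $V_{\om_2}$ and their relevant twists'' does not exhaust the $p$-restricted weights: for $p=2$ you must also treat $\om_1+\om_2$, and for $p=3$ the weights $2\om_1$, $2\om_2$, $\om_1+\om_2$, $2\om_1+\om_2$, $\om_1+2\om_2$, $2\om_1+2\om_2$ (note that $V_{\om_1+\om_2}$ in characteristic $2$ is not a Frobenius twist or tensor product of smaller modules, since $G_2$ has no special isogeny there). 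The missing idea, which the paper supplies, is that for $G_2$ and $p\in\{2,3\}$ the set of weights of $V_\om$ coincides with the characteristic-$0$ weight set for every $p$-restricted $\om$ except $(p,\om)=(3,2\om_2)$ (by \cite[Theorem 15]{Z}), and in that exceptional case the tables of \cite{Lu1} show the weights of $V_{\om_2}$ still occur in $V_{2\om_2}$; this restores the weight-containment reduction to the base case without Premet. Without this ingredient (or some substitute, e.g.\ Steinberg's special-isogeny factorization for $p=3$ plus explicit weight data for $2\om_1$ and $2\om_2$), your argument does not cover characteristics $2$ and $3$, which form a substantial part of the lemma's content.
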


 \begin{proof} Let $V$ be a non-trivial $G$-module and $1\neq s\in T$ a non-regular element. We have to show that the
   spectrum of $s$ on $V$ is not almost simple.   Let $\om$ be the highest weight of $V$. Suppose first that $\om=\om_1$ or 
$p=3,\om=\om_2$,
so $\dim V=7$, or 6 for $p=2$.
The group $G$ contains a maximal rank closed subgroup $H$ isomorphic to $A_2$
such that the restriction of $V_{\om_1}$ to $H$ is completely reducible; the irreducible constituents are the natural module for 
$\SL_3(F)$, and its dual and, if $p\neq 2$, an additional trivial summand. So the matrix of $s$ on $V_{\om_1}$ can be written as 
$\diag(a,b,c,1,a\up,b\up,c\up)$ if $p\neq 2$, otherwise $\diag(a,b,c,a\up,b\up,c\up)$, where $abc=1$ in both cases. This is 
also true if $p=3$ and $V=V_{\om_2}$. If all the entries are distinct, this matrix is a regular element in $\SL(V)$, and hence 
in $G$, contrary to the assumption.

Suppose that the entries are not distinct. As any permutation of $a,b,c$ can be realized by an inner automorphism of  $G$, we
 may assume that $a$ equals some other diagonal entry and by the same reasoning, we may ignore the possibilities $a=c$ 
and $a=c\up$. So we examine the cases  $a=b$, $a=a\up$, and $a=b\up$.

 Let $a=b$.  Then $s$ has almost simple spectrum on $V_\om$ only if $a=a\up$. But then $c=1$ and $s$ is not almost cyclic on $V_\om$.   

Let $a=a\up \neq b$, so  $a=\pm 1$, $c=\pm b\up$. If $a=1$, then $b\neq 1$,
$s$ acts on $V_\om$ as $\hat s=\diag (1,b,b\up,1,1,b\up, b )$  (where we drop the 1 in the middle if $p=2$)  which does not have
almost simple spectrum. If $a=-1$ then 
$p\neq 2$ and  $s$ acts on $V_\om$ as $\hat s=\diag (-1,b,-b\up,1,-1, b\up, -b )$. If $b=\pm 1$ then \so $\hat s$ is not almost simple.
Let $b\neq \pm 1$.
As $V$ is an orthogonal space, $s$ is a regular element of $SO(V)$ (Lemma~\ref{co1}), and hence in $G$, contrary to the assumption.

Let $a=b\up$. Then $c=1$.  By reordering $a,c$, we arrive at the case $a=1$, considered above.
 This completes the analysis of the cases $\om=\om_1$, and $(\om,p) = (\om_2,3)$.

 Suppose now that $\om$ is an arbitrary $p$-restricted weight. If $p\neq 2,3$ then the weights of  $V_{\om_1}$ occur as 
weights of $V$ (Lemma~\ref{wtlattice}),
 so the result follows from that for $V_{\om_1}$. Let $p=2$; now $0,\om_1,\om_2,\om_1+\om_2$ are the only $2$-restricted dominant weights of $G$.  By \cite[Theorem 15]{Z}, the
weights   of $V_\om$  are the same as in characteristic $0$, in particular all weights of $V_{\om_1}$ are weights of $V_\om$, and
we conclude as above.

Now turn to the case $p=3$ and $\omega$ still $p$-restricted. By \cite[Theorem 15]{Z}, if
 $\om\neq 2\om_2$ then the weights of $V_\om$ are the same as in characteristic $0$, and in particular
all weights of $V_{\om_1}$ are weights of $V_\om$. So the result follows as above. For $p=3$ and $\om=2\om_2$, we use the tables of 
\cite{Lu1} to see that the weights of $V_{\om_2}$ are weights of $V_{2\om_2}$, and then conclude as before.

Finally, suppose that  $\om$ is not $p$-restricted.  By Remark~\ref{nonrest}, we may assume that   $V$ is tensor-decomposable, say,
$V=V_1\otimes V_2$, where the highest weight of $V_1$ is of the form $p^k\om'$ for some $k$.
Then the result follows by Lemma~\ref{td2}.\end{proof}

\subsection{Groups $B_n$ with $n>2$,  $D_n$ with $n>3$, and $C_n$ with $p=2$ and $n>2$}\label{BCD}
$\newline$

In this section, we consider the groups as indicated in the heading of the
section. Recall that when $G=B_n$, we may assume $p\ne 2$. Note that for groups $G$ of type $B_n$ and of type $D_n$,
the \mult of the $0$ weight in the adjoint
\rep $V_{\om_2}$ is greater than $1$.  Therefore, if $\om$ is a dominant weight such that
$\om_2\prec\om$ then,  by Lemma~\ref{no1}(2), it suffices to observe that a non-central
non-regular semisimple element $s\in G$ is not of almost simple spectrum  on $V_{\om_2}$. This is done in Lemma~\ref{bd0} below.
The condition $\om_2\prec\om$ holds provided $\om$ is a  radical weight and $\om\neq 0,\om_1,\om_2$ for $G$ of type $B_n$, and
$\om\neq 0,\om_2$ for $G$ of type $D_n$.

\begin{lemma}\label{bd0} Let $G=B_n$, $n>2$, $p\ne 2$, $\omega\in\{\omega_2,\omega_n\}$ or $G=D_n,n>3$,
  $\omega\in\{\omega_2,\omega_{n-1},\omega_n\}$. Let $s\in T\setminus Z(G)$ be a non-regular
 element. Then \so $s$ on $V_\omega$ is not almost simple, unless $G=D_4$, $\omega\in\{\omega_3, \omega_4\}$.\el

 \begin{proof} Here we take $T$ to be the preimage in $G$ of the set of diagonal matrices in the image of $G$ under the natural representation.
We take $s\in T$ and assume \so $s$  on $V_\omega$ is almost simple. Since $s$ is not regular, there exists a root
   $\alpha$ with respect to $T$ such that $\alpha(s)=1$. We will assume without loss of generality that either $\alpha = \alpha_1$,
   or $G=B_n$ and $\alpha=\alpha_n$. 

Suppose first that $\om=\om_2$. Set $R_0 = \{\alpha\in\Phi\ |\ \alpha(s)=1\}$. Since $s$ is non-central, there exists
$\beta\in\Phi\setminus R_0$. Moreover, since $\Phi$ is an irreducible root system, there exists $\beta\in\Phi\setminus R_0$
which is not orthogonal to $R_0$. So for some $\alpha\in R_0$, $w_\beta(\alpha)\ne \beta$. Let $w_\al\in W(G)$ be the reflection
corresponding to $\al.$ Then $\beta(s) = w_\alpha(\beta)(s)\ne 1$, while $\alpha(s)=-\alpha(s) = 1$. So $s$ is not almost
cyclic on $\omega_2$.

Let $\om\in\{\om_{n-1},\om_n\}$, for $G=D_n$ and $n>4$,  or $\om=\om_n$ for $G=B_n$. Then
$\mu=\frac{1}{2}( \al_1+\nu)$ is a weight of $V_{\om}$, for $\nu\in\{\pm\ep_3\pm \cdots \pm\ep_{n}\}$,
with certain condition on the parity of the number of  minus signs in the $D_n$-case. Suppose that $\al=\al_1$.
Then $\mu-\al_1$ is a weight of $V_\om$ for any admissible choice of the signs. As \so $s$  on $V_\om$ is almost simple, we deduce that $\mu(s)$ does not depend on the choice of $\nu$ and so $\ep_3(s)=\cdots =\ep_n(s)=1$. Similarly, this then implies that $(\frac{1}{2}( \ep_1+\ep_2+\nu))(s)$ does not
depend on the choice of $\nu$, so again this value must be equal to   $(\frac{1}{2}( \pm(\ep_1-\ep_2)+\nu))(s)$,
whence $\ep_1(s)=1=\ep_2(s)$ as well. This implies $s\in Z(G)$, a contradiction.

Finally, suppose that $G=B_n$, $\omega = \omega_n$ and $\al=\alpha_n$.
Then for all $1\leq i\leq n-1$, we have the two distinct weights of $V_\omega$, $\omega-\alpha_i-\alpha_{i+1}-\cdots-\alpha_{n-1}-\alpha_n$ and 
$\omega-\alpha_i-\alpha_{i+1}-\cdots-\alpha_{n-1}-2\alpha_n$, taking the same value on $s$,  and therefore deduce that $\alpha_i(s) = 1$ for all $i$,
again contradicting the fact that $s$ is non-central.\end{proof}

\begin{remk} If $G=D_4$  then  there exist non-central non-regular semisimple elements $s$ with almost simple spectrum on  $V_ {\omega_3}$ or
  $V_{\omega_4}$. Indeed, one easily
  observes that there are  non-regular elements $s\in T\setminus Z(G)$
  whose spectrum is almost simple on $V_ {\omega_1}$. Let $\si$ be  the triality automorphism of $G$. Then $\si(s)$ has almost
  simple spectrum  on $V_{\om_1}^\sigma$, whence the claim.\end{remk}

\begin{lemma}\label{Bn2} Let $G=B_n$, $n>2$, $p\ne 2$, or $G=D_n$, $n>4$ , and let $V_\om\in\Irr(G)$, where
$\om\neq 0$ is p-restricted. Let $s\in T\setminus Z(G)$ be a non-regular
 element with almost simple spectrum  on $V_\om$. Then  $\om= \om_1$. If $G=D_4$ then $\om\in \{\om_1,\om_3, \om_4\}$. \end{lemma}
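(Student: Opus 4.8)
The plan is to treat $\om$ according to its coset modulo the root lattice $R$, splitting first into the radical and the non-radical cases. All of Theorem~\ref{premet} and Lemmas~\ref{wtlattice}, \ref{no1}, \ref{mr1} are available here, since $p\ne 2$ forces $p=0$ or $p>e(B_n)=2$, while $e(D_n)=1$. The recurring input is Lemma~\ref{bd0}: for non-central non-regular $s$ the spectrum on the adjoint module $V_{\om_2}$ (note $\om_a=\om_2$), and on the minuscule modules $V_{\om_n}$ (type $B$), $V_{\om_{n-1}},V_{\om_n}$ (type $D$), is never almost simple, apart from the two half-spin modules of $D_4$. Two elementary remarks guide the bookkeeping: every dominant weight subdominant to $\om$ lies in the coset of $\om$ modulo $R$; and, by Lemma~\ref{wtlattice}(3), a non-radical $\om$ has a minuscule weight as its (then unique) minimal nonzero subdominant weight $\mu_m$.

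For radical $\om\ne 0$ I would argue that $\om_2\prec\om$ unless $\om$ is small. Since the subdominant weights of a radical weight are radical, Lemma~\ref{mr1} forces, for $\om$ beyond level $2$ in type $D$ (beyond level $3$ in type $B$), a radical subdominant weight lying in $\Lambda_2$ (resp.\ $\Lambda_3$); by Lemma~\ref{abcd} the only such weight is $\om_2$, so $\om_2\prec\om$, and Lemma~\ref{no1}(3) together with Lemma~\ref{bd0} eliminates $\om$. The low-level radical weights that remain are $0,\om_2$ in type $D$ and $0,\om_1,\om_2$ in type $B$; the weight $\om_2$ is killed directly by Lemma~\ref{bd0}, so the only radical survivor is $\om=\om_1$, occurring only for $B_n$ (as $\om_1$ is non-radical in type $D$).

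In the non-radical case I would use $\mu_m$. For $B_n$ the unique minuscule weight is $\om_n$, so $\mu_m=\om_n$; as $s$ is not almost simple on $V_{\om_n}$ (Lemma~\ref{bd0}), Lemma~\ref{no1}(1)(i) (when $\om\ne\om_n$), or Lemma~\ref{bd0} itself (when $\om=\om_n$), eliminates every non-radical $\om$. Exactly the same works whenever $\om$ lies in a spin coset of $D_n$, where $\mu_m\in\{\om_{n-1},\om_n\}$. Combined with the radical case this already gives $\om=\om_1$ for $B_n$, and reduces type $D$ to the vector coset.

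The one case requiring genuine work—the expected main obstacle—is $\om$ in the vector coset of $D_n$ with $\om\ne\om_1$, where $\mu_m=\om_1$ and Lemma~\ref{no1}(1)(i) applies only if $s$ is not almost simple on the natural module. So I would assume $s$ is almost simple on $V_{\om_1}$ and extract its shape: since $s$ is conjugate to $s\up$ in the orthogonal group, a repeated eigenvalue forces its inverse to repeat as well, so almost simplicity pins the unique repeated eigenvalue to $\pm1$, and non-regularity (Lemma~\ref{co1}(3)) then yields at least two indices $i$ with $\ep_i(s)=\epsilon\in\{1,-1\}$ and all remaining $\ep_j(s)$ distinct and $\ne\pm1$. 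By Lemma~\ref{mr1} and Lemma~\ref{abcd}, $\om_3$ is the unique vector-coset weight of level $2$, so $\om_3\preceq\om$; choosing two indices $i,j$ with $\ep_i(s)=\ep_j(s)=\epsilon$ and one index $m$ with $b:=\ep_m(s)\ne\pm1$, the weights $\ep_i\pm\ep_j\pm\ep_m$ of $V_{\om_3}$ satisfy $(\ep_i+\ep_j+\ep_m)(s)=(\ep_i-\ep_j+\ep_m)(s)=b$ and $(\ep_i+\ep_j-\ep_m)(s)=(\ep_i-\ep_j-\ep_m)(s)=b\up$ with $b\ne b\up$, so $s$ is not almost simple on $V_{\om_3}$, hence not on $V_\om$ by Lemma~\ref{wtlattice}(1). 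This leaves only $\om=\om_1$ for $n>4$. For $D_4$ the weights $\om_1,\om_3,\om_4$ are permuted by triality and are precisely the exceptional minuscule modules of Lemma~\ref{bd0}, so they survive; the non-minimal weights in each coset are eliminated by the same scheme, running the level-$2$ computation on the appropriate module (e.g.\ $V_{\om_3+\om_4}$ for the vector coset) or transporting it through triality, which gives $\om\in\{\om_1,\om_3,\om_4\}$.
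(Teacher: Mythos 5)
Your proof is correct. For radical weights and for $B_n$ it follows the paper's own line: reduce to $\om_2\prec\om$ (resp.\ $\om_n\preceq\om$) via the level tables and kill the spectrum by Lemma~\ref{bd0} together with Lemma~\ref{no1}; your substitution of Lemma~\ref{no1}(3) for the paper's Lemma~\ref{no1}(2) in the radical case is in fact a slight improvement, since (3) uses only the non-regularity of $s$ and avoids any hypothesis on the multiplicity of the zero weight. The genuine divergence is the non-radical case for $D_n$, which is the heart of the lemma. The paper deals with it in one stroke: by the Reduction Theorem~\ref{ac4}, all non-zero weights of $V_\om$ have multiplicity $1$, and the classification of such modules in \cite[Tables 1,2]{TZ2} leaves only $\om\in\{\om_1,2\om_1,\om_2,\om_{n-1},\om_n\}$, after which Lemma~\ref{bd0} finishes. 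You avoid both Theorem~\ref{ac4} and the external tables: you split into cosets modulo the root lattice, dispose of the spin cosets by Lemma~\ref{bd0} and Lemma~\ref{no1}(1)(i), and for the vector coset you pin down the shape of a non-regular non-central $s$ that is almost simple on $V_{\om_1}$ (two indices with $\ep_i(s)=\ep_j(s)=\epsilon=\pm1$, some $\ep_m(s)=b$ with $b^2\ne 1$) and then exhibit the weights $\ep_i\pm\ep_j\pm\ep_m$ of $V_{\om_3}$, which take the values $b,b,b^{-1},b^{-1}$ on $s$; since $\om_3$ is the unique vector-coset weight in $\Lambda_2$, Lemmas~\ref{mr1} and \ref{abcd} give $\om_3\preceq\om$, and Lemma~\ref{wtlattice}(1) transfers the failure of almost simplicity to $V_\om$. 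This computation is valid in all characteristics, as the weights used are $W$-conjugates of the highest weight $\om_3$, hence always occur. The trade-off: the paper's argument is much shorter but leans on Theorem~\ref{ac4} and on the external classification in \cite{TZ2}; yours is self-contained within the weight-level machinery of Section~\ref{sec:levels} and Lemma~\ref{co1}, at the price of the explicit analysis and of the $D_4$ case, which you only sketch --- to complete it, one should state that a triality automorphism normalizing $T$ preserves non-regular non-central elements of $T$ and carries the level-two weight of the vector coset to that of each spin coset, so that the vector-coset argument transports verbatim.
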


\begin{proof} If $\om$ is radical, this follows from Lemmas~\ref{bd0}, ~\ref{no1}(2) and ~\ref{wtlattice}, both for $B_n$
and $D_n$.

Suppose that $\om$ is not radical. If $G=B_n$ then $\om_n\preceq \om$ by Lemma~\ref{wtlattice}(2),
so again the result follows from Lemmas~\ref{bd0} and ~\ref{no1}(1)(i). Let $G=D_n$.
By Theorem~\ref{ac4}, 
all non-zero weights of $V_\om$
are of \mult 1. Then, by \cite[Tables 1,2]{TZ2}, $\om\in\{\om_1,2\om_1,\om_2,\om_{n-1},\om_n\}$, where the radical weights
$2\om_1,\om_2$ are to be dropped. Whence the result for $n=4$. If $n>4$ then \so $s$ on $V_{\om}$ is not almost simple  by
Lemma~\ref{bd0}. \end{proof}

We now handle the case  $G=C_n$, for $n>2$ and $p=2$, which is excluded in Proposition~\ref{vg1}. Moreover,
we only need to consider $V_{\om_n}$ (see Proposition~\ref{vg1}).

\begin{lemma}\label{Cnexc} Let $G=C_n$, $n>2$, $p=2$.
Let $1\neq s\in T$ be a  non-regular  element. Then \so $s$ on $V_{\omega_n}$ is not almost simple.
\end{lemma}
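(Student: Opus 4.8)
The plan is to describe the spin representation $V_{\omega_n}$ of $G=C_n$ explicitly when $p=2$ and exhibit two pairs of distinct weights taking equal values on $s$, thereby showing the spectrum cannot be almost simple. First I would fix $T$ to be the diagonal torus in $\Sp_{2n}(F)$ and set $a_i=\ep_i(s)$ for $1\le i\le n$. Since $p=2$, the weights of $V_{\omega_n}$ are $\frac{1}{2}(\pm\ep_1\pm\ep_2\pm\cdots\pm\ep_n)$, each of multiplicity $1$ (this module is minuscule-like in its weight set), so the eigenvalues of $s$ on $V_{\omega_n}$ are exactly the values $\prod_i a_i^{\pm 1/2}$ as the signs range over all $2^n$ choices. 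Because $s$ is non-regular, by Proposition~\ref{re3} there is a root $\al$ with $\al(s)=1$; after conjugating by the Weyl group (which permutes the $\ep_i$ and inverts them, realizing all sign changes and coordinate permutations) I may assume either $a_ia_j^{-1}=1$, i.e. $a_i=a_j$ for some $i<j$, or $a_i^2=1$ for some $i$ (the short roots $\pm 2\ep_i$ give $\al(s)=a_i^{\pm 2}$, but in characteristic $2$ one should track the long and short root conditions carefully via Lemma~\ref{co1}).

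Next I would analyze the two cases. Suppose $a_i=a_j$ for some $i\ne j$, say $i=1$, $j=2$. Pick any fixed choice of signs $\delta_3,\dots,\delta_n$ for the remaining coordinates and consider the four weights obtained by assigning signs $(+,+),(-,-),(+,-),(-,+)$ to $(\ep_1,\ep_2)$. The weight with $(+,+)$ and the weight with $(-,-)$ (on the first two coordinates) evaluate on $s$ to $(a_1a_2)^{\pm 1/2}\cdot c$ and $(a_1a_2)^{\mp 1/2}\cdot c$ for a common factor $c$; more usefully, since $a_1=a_2$, the weights with sign patterns $(+,-)$ and $(-,+)$ on $(\ep_1,\ep_2)$ take the \emph{equal} value $c$ on $s$ because $a_1^{1/2}a_2^{-1/2}=a_2^{1/2}a_1^{-1/2}=1$. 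I would then produce a \emph{second} coincident pair, disjoint in value from the first, by varying one of the remaining signs $\delta_k$: the resulting two weights again agree on $s$ (same cancellation in coordinates $1,2$) but give a value $c'\ne c$ provided some $a_k\ne 1$, which holds since $s$ is non-central. This yields four distinct weights $\lam_1,\lam_2,\nu_1,\nu_2$ with $\lam_1(s)=\lam_2(s)\ne\nu_1(s)=\nu_2(s)$, so the spectrum is not almost simple. The case $a_i^2=1$ is handled identically, using that $a_i^{1/2}=a_i^{-1/2}$ forces the same two-sign cancellation on the $i$-th coordinate.

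The main obstacle I anticipate is the half-integer weight bookkeeping together with the characteristic-$2$ subtleties: one must be sure the stated weights are genuine weights of the irreducible (not merely the Weyl) module $V_{\omega_n}$ in characteristic $2$, and that they have multiplicity $1$, so that equal weight values really do force eigenvalue coincidences of the required multiplicities. Because $n>2$, there are always at least $n-2\ge 1$ ``free'' coordinates $k\notin\{i,j\}$ available to manufacture the second coincident pair with a distinct eigenvalue, and non-centrality of $s$ guarantees some $a_k\ne 1$; I would verify that $n>2$ is exactly what makes both pairs simultaneously available. The argument is uniform across the two non-regularity cases, and since $V_{\omega_n}$ has all weights of multiplicity $1$, once I locate four weights in the pattern above the conclusion is immediate without any further multiplicity computation.
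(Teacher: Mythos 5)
Your overall strategy---find a root vanishing on $s$ and exhibit two pairs of weights, each pair taking a common value on $s$ but the two values distinct---is the same idea as the paper's proof, which runs it in the contrapositive: assuming the spectrum almost simple, all the pairwise coincidences must collapse onto a single eigenvalue, and this forces $s=1$. However, your Case 1 has a genuine gap. When $a_i=a_j$ (say $i=1$, $j=2$), your second coincident pair is obtained by flipping one of the remaining signs $\delta_k$, $k\geq 3$, and you need $a_k\neq 1$ for its value to differ from that of the first pair; you assert this ``holds since $s$ is non-central.'' It does not: non-centrality (for $p=2$ this is just $s\neq 1$, as $Z(\Sp_{2n}(F))$ is trivial in characteristic $2$) only guarantees some $a_m\neq 1$ with $1\leq m\leq n$, and that index may lie in $\{i,j\}$. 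Concretely, the element $s$ with $\ep_1(s)=\ep_2(s)=a\neq 1$ and $\ep_k(s)=1$ for all $k\geq 3$ is non-regular and non-trivial, yet every ``free'' coordinate gives $a_k=1$, so both of your pairs take the same value and you have produced only one repeated eigenvalue. The lemma is still true for this $s$: the $2^{n-2}\geq 2$ weights $\ep_1+\ep_2+\nu$ (this is exactly where $n>2$ is needed) all take the value $a^2\neq 1$, giving a second repeated eigenvalue distinct from $1$; equivalently, since $a_3=1$ the long root $2\ep_3$ vanishes on $s$, so your other case applies with index $3$, outside of which a coordinate with $a_m\neq 1$ does exist. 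Either patch closes the gap, but the case division must then be organized so that ``some $a_i=1$'' is treated first, which your write-up does not do. The paper's contrapositive formulation handles this configuration automatically: the two forced repeated values $a_1^2$ and $1$ must coincide, whence $(2\ep_1)(s)=1$ and $s=1$.

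A secondary error: the weights of $V_{\om_n}$ for $C_n$, $p=2$, are $\pm\ep_1\pm\cdots\pm\ep_n$ (the Weyl orbit of $\om_n=\ep_1+\cdots+\ep_n$, each of multiplicity one), not $\frac{1}{2}(\pm\ep_1\pm\cdots\pm\ep_n)$; the half-sums do not even lie in the weight lattice of $C_n$---you have written down the weights of the spin module of $B_n$, presumably with the exceptional isogeny in mind. (Also, $\pm 2\ep_i$ are the \emph{long} roots of $C_n$, not the short ones.) Because squaring, and hence extraction of square roots, is bijective on $F$ in characteristic $2$, this rescaling does not change the combinatorics of your coincidence argument, but it should be corrected; the clean simplification you should be invoking throughout is that $a^2=1$ forces $a=1$ when $p=2$, which is what makes both of your cases, and the final contradiction with $s\neq 1$, actually work.
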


\begin{proof} We argue as in the proof of Lemma~\ref{bd0}. We can assume that $\al(s)=1$ for $\al=\al_1$ or $\alpha=2\ep_1$.
The weights of $V_{\om_n}$ are $\pm\ep_1\pm\cdots \pm\ep_n$. Then  $\mu:=\ep_1\pm\ep_2+\nu$ are weights of $V_{\om_n}$
for any $\nu=\pm\ep_3\pm\cdots\pm\ep_n$.
If $\al=2\ep_1$ then $\mu-\al$ is a weight of $V_{\om_n}$, and we conclude (as in the proof of Lemma~\ref{bd0}) that
$(2\ep_1)(s)=\cdots =(2\ep_n)(s)$. As $p=2$, we have $\ep_1(s)=\cdots =\ep_1(s)$, whence $s\in Z(G)=1$.

If $\al=\al_1$ then for $\mu=\ep_1-\ep_2+\nu$ we have $\mu-\al_1\in\Omega(V_{\om_n})$, whence
$(2\ep_3)(s)=\cdots =(2\ep_n)(s)=1$. This implies that $(\ep_1+\ep_2+\nu)(s)$ does not depend on $\nu$, nor does $(\epsilon_1-\epsilon_2+\nu)(s)$,
whence $(2\ep_1)(s)=1$, and again we conclude that $s=1$.\end{proof}

\subsection{Proof of Theorem~{\rm\ref{c99}}}

\begin{proof} Let $G,s$ be as in the statement of Theorem~{\rm\ref{c99}}.
Note that ${\rm rank}(G)\geq 2$.

Suppose first that $\lam$ is $p$-restricted.
The groups of rank $2$ have been examined in Lemmas~\ref{sl3}, ~\ref{C22C}, ~\ref{cc2} and ~\ref{gg2}, and
the group of type $A_3$ in Lemma~\ref{ht2}. In Proposition~\ref{vg1}, we handled the groups
$A_n,n>3$,  $F_4$, $E_6,E_7,E_8$, and all $p$-restricted weights for the group $C_n$, $n>2$, except the weight
$\omega=\omega_n$ when $p=2$. The latter is handled in Lemma~\ref{Cnexc}.

Groups of type $B_n,n>2,p\ne 2$ and  $D_n$ are dealt with in Lemma~\ref{Bn2}. 

By remark~\ref{nonrest}, we may now assume that $V$
is tensor-decomposable. Let $V=V_1\otimes V_2$ be a non-trivial tensor decomposition of $V$.   By Lemma~\ref{td2}, the
spectra of $s$ on  $V_1$ and $V_2$ are simple. This contradicts Lemma~\ref{td2}.   \end{proof}

Finally, we conclude with a straightforward corollary of Theorem~\ref{c99}.

\begin{corol}\label{em5} Let $s\in T\setminus Z(G)$ be a  non-regular element and V an \ir $G$-module.
Suppose that \so s  on  V is almost simple. Then
the \ei multiplicities of $s$ on V do not exceed $m=m_V(s)$, where  either $m\leq {\rm rank}(G)$ or
one of the \f holds:

$(1)$ $G=A_3$, $\dim V=6$, $m =4;$

$(2)$ $G=B_n$, $n>2$, $p\neq 2$,  $\dim V=2n+1$, $m=2n$;

$(3)$ $G=C_n$, and either $\dim V = 2n$ and $m=2n-2$ or $n=2$, $p\ne 2$, $\dim V = 5$ and $m=4$;

$(4)$ $G=D_n$, $n>3$, $\dim V=2n$, $m=2n-2$.
\end{corol}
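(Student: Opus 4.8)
The plan is to use Theorem~\ref{c99} to collapse the problem to a short explicit list of pairs $(G,V)$, and then to read off the maximal \ei \mult of a non-central non-regular semisimple $s$ with almost simple spectrum directly from the weights in each case. First I would invoke Theorem~\ref{c99}: under the hypotheses, $(G,V)$ must be one of the natural modules $V_{\om_1}$ for $G=A_n,B_n\,(p\neq2),C_n,D_n$, of dimensions $n+1,2n+1,2n,2n$, or one of the three exceptional modules $G=A_3$ with $\dim V=6$, $G=C_2\,(p\neq2)$ with $\dim V=5$, or $G=D_4$ with $\dim V=8$. Since almost simple spectrum means exactly one \ei has \mult $m>1$ while all others have \mult $1$, the number $m=m_V(s)$ is just that exceptional multiplicity, so the whole corollary reduces to bounding it, and exhibiting an extremal element, module by module.

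For the natural module of $A_n$ the element $s$ is non-scalar, hence has at least two distinct \eis, so the repeated one has \mult at most $n={\rm rank}(G)$, landing in the first alternative. For the self-dual natural modules of $B_n,C_n,D_n$ the crucial observation is that the multiset of \eis is inversion-invariant: were the repeated \ei $\lambda$ to satisfy $\lambda\neq\lambda\up$, then $\lambda\up$ would also be repeated, contradicting almost simplicity; hence the repeated \ei is self-inverse, i.e.\ equal to $\pm1$ (using $p\neq2$ for $B_n$). Writing $\ep_i(s)=a_i$, the \eis are the $a_i,a_i\up$ together with a single $1$ from the $0$ weight in the $B_n$ case, so the \mult of $-1$ equals $2\,|\{i:a_i=-1\}|$ and grows in steps of $2$, and similarly for the \ei $1$. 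Non-centrality excludes the scalar configurations, while Lemma~\ref{co1} both certifies that the extremal elements are non-regular and pins down the maximum: $2n$ for $B_n$ (all $a_i=-1$, giving \eis $-1$ of \mult $2n$ and $1$ of \mult $1$), and $2n-2$ for $C_n$ and $D_n$ (one generic $a_i$, the remaining $n-1$ equal to $1$ or to $-1$); for $C_2$ this last value is $2={\rm rank}(G)$, again the first alternative.

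The three exceptional modules I would treat by the explicit data already in hand. For $G=A_3$, $\dim V=6$, Lemma~\ref{ht2} gives $s=\diag(a,a,\pm a\up,\pm a\up)$ with $a^4\neq1$; acting on $\wedge^2$ of the natural module it has \eis $a^2,\pm1,a^{-2}$ with $\pm1$ of \mult $4$, so $m=4$. For $G=C_2$, $\dim V=5$, Remark~\ref{rm5}(2) records that $s=\pm\diag(1,1,-1,-1)$ acts on $V_{\om_2}$ as $\diag(1,-1,-1,-1,-1)$, whence $m=4$. For $G=D_4$, $\dim V=8$, the modules $V_{\om_3},V_{\om_4}$ are carried to the natural module $V_{\om_1}$ by the triality automorphism (as in the remark following Lemma~\ref{bd0}), so the bound coincides with that of the $D_4$ natural module, namely $2n-2=6$. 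Collecting all these values produces exactly the stated dichotomy.

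The only genuine work, and the main obstacle, is the multiplicity count for the self-dual natural representations: one must combine the self-inverse constraint on the repeated \ei with the precise regularity criteria of Lemma~\ref{co1} to guarantee that the extremal configurations (all $a_i=-1$ for $B_n$; $n-1$ coincident entries for $C_n,D_n$) are simultaneously non-central, non-regular, and of almost simple spectrum, and that no larger \mult is compatible with almost simplicity. The small-rank bookkeeping also needs a little care, in particular recognizing that $C_2$ falls under $m\leq{\rm rank}(G)$ whereas $C_n$ for $n>2$ yields the genuinely exceptional value $2n-2$, and that the $D_4$ spin modules contribute nothing beyond the $C$-type and $D$-type natural-module bounds.
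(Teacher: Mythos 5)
Your proposal is correct and follows essentially the same route as the paper: reduce to a short list via Theorem~\ref{c99}, then determine $m$ case by case, using Lemma~\ref{ht2} for $A_3$, the explicit $C_2$ data, and triality for $D_4$; you merely supply the detailed eigenvalue counts for the natural modules that the paper's proof treats as clear. The one point to add is that Theorem~\ref{c99} pins down only $\dim V$, so one must note (as the paper does) that the irreducible modules of those dimensions are Frobenius twists (or, for $A_n$, duals) of $V_{\omega_1}$ and that $m_V(s)$ is unchanged under twisting and duality.
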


\begin{proof} This will follow from Theorem~\ref{c99}; we discuss each of the cases of the theorem. To get (1)
  above, we additionally use Lemma~\ref{ht2}.
For $G=C_2$, $p\neq 2$, we use Lemma~\ref{cc2}. The modules
of dimensions indicated in Theorem~\ref{c99}(1) are obtained by Frobenius twisting of $V_{\om_1}$ (where the statement is clear); $m_V(s)$ remains unchanged under such a twist.
This leaves us with $G=D_4$ and $\dim V = 8$. The modules $V_{\om_1},$  $V_{\om_3},$ $V_{\om_4},$ are obtained from each other by a graph automorphism of $G$, and the other modules of dimension $8$ as in  Theorem~\ref{c99}(4) are Frobenius twists of these. The result follows.\end{proof}

\end{document}